\newcommand{\Rl}{\mathbb{R}}
\newcommand{\Cplx}{\mathbb{C}}
\newcommand{\Itgr}{\mathbb{Z}}
\newcommand{\Ntrl}{\mathbb{N}}
\newcommand{\Circ}{\mathbb{T}}
\newcommand{\sym}{\mathrm{sym}}
\newcommand{\A}{\mathcal{A}}
\newcommand{\Bc}{\mathcal{L}}
\newcommand{\Kc}{\mathcal{K}}
\newcommand{\Lc}{\mathcal{L}}
\newcommand{\SU}{\mathrm{SU}}
\newcommand{\SO}{\mathrm{SO}}
\newcommand{\diag}{\mathrm{diag}}
\newcommand{\Sp}{\mathrm{Sp}}
\newcommand{\Tr}{\mathrm{Tr}}
\newcommand{\Dc}{\mathcal{D}}
\newcommand{\Vol}{\mathrm{Vol}}
\newcommand{\Ec}{\mathcal{E}}
\newcommand{\Sb}{\mathbb{S}}
\newcommand{\tr}{\mathrm{tr}}
\theoremstyle{plain}
\newtheorem{thm}{Theorem}[section]
\newtheorem{rem}[thm]{Remark}
\newtheorem{lem}[thm]{Lemma}
\newtheorem{cor}[thm]{Corollary}
\newtheorem{defi}[thm]{Definition}
\newcommand{\hightlight}{}
\title{A $C^*$-algebraic approach to the principal symbol II.}
\author{Edward McDonald}
\author{Fedor Sukochev}
\author{Dmitriy Zanin}
\begin{document}

\maketitle
\begin{abstract}
    We introduce an abstract theory of the principal symbol mapping for pseudodifferential operators 
    extending the results of \cite{Dao1} and providing a simple
    algebraic approach to the theory of pseudodifferential operators in settings important in noncommutative geometry.
    We provide a variant of Connes' trace theorem which applies to certain noncommutative settings, with
    a minimum of technical preliminaries. Our approach allows us to consider in a operators with non-smooth symbols,
    and we demonstrate the power of our approach by extending Connes' trace theorem to operators with non-smooth symbols
    in three examples: the Lie group $\SU(2)$, noncommutative tori and Moyal planes.
\end{abstract}

\section{Introduction}
    This paper follows on from the work of the second two named authors in \cite{Dao1}, where a $C^*$-algebraic approach to the definition of the principal
    symbol map for order $0$ pseudodifferential operators was introduced. 
    
    The approach to the theory of pseudodifferential operators on $\Rl^d$ suggested in \cite{Dao1} is detailed as follows. One considers
    the $C^*$-algebras $L_\infty(\Rl^d)$ and $L_\infty(\Sb^{d-1})$ of essentially bounded functions on Euclidean space $\Rl^d$
    and the unit sphere $\Sb^{d-1}$ respectively. These algebras are represented on the Hilbert space $L_2(\Rl^d)$ via
    \begin{align*}
        (\pi_1(f)\xi)(t) &= f(t)\xi(t),\quad t \in \Rl^d\\
        (\pi_2(g)\xi) &= g\left(\frac{\nabla}{(-\Delta)^{1/2}}\right)\xi
    \end{align*}
    where $f \in L_\infty(\Rl^d)$, $g \in L_\infty(\Sb^{d-1})$, $\xi \in L_2(\Rl^d)$, $\nabla$ denotes the gradient operator and $\Delta$
    is the Laplacian. {\hightlight  Alternatively, $\pi_2(g)$ may be defined as Fourier multiplication on $L_2(\Rl^d)$ by the homogeneous function $t \mapsto g\left(\frac{t}{|t|}\right)$.}
    
    One then introduces a $C^*$-algebra $\Pi$, defined to be the $C^*$-subalgebra of the algebra of all bounded linear operators on $L_2(\Rl^d)$ generated by $\pi_1(L_\infty(\Rl^d))$
    and $\pi_2(L_\infty(\Sb^{d-1}))$. It is then proved (as \cite[Theorem 1.2]{Dao1}) that there is a unique norm-continuous $*$-homomorphism $\sym:\Pi\to L_\infty(\Rl^d\times \Sb^{d-1})$
    mapping $\pi_1(f)$ to $f\otimes 1$ and $\pi_2(g)$ to $1\otimes g$, for all $f \in L_\infty(\Rl^d)$ and all $g \in L_\infty(\Sb^{d-1})$.
    
    The algebra $\Pi$ contains (up to compact perturbations) all pseudodifferential operators of order $0$ on $\Rl^d$, and the mapping $\sym$ is an extension
    of the principal symbol mapping to non-smooth symbols. The motivation for introducing the algebra $\Pi$ was to develop in a self-contained manner the theory
    of order zero operators on $\Rl^d$, with a particular focus on Connes' trace formula \cite[Theorem 1]{Connes-Action}.

    In this paper we consider a far-reaching abstraction of the setting of \cite{Dao1}.    
    We consider two unital $C^*$-algebras $\A_1$ and $\A_2$ with faithful representations
    $\pi_1$ and $\pi_2$ respectively on the same Hilbert space $H$. 
    We define $\Pi(\A_1,\A_2)$ to be the $C^*$-subalgebra generated by $\pi_1(\A_1)$ and $\pi_2(\A_2)$. We discuss conditions on $\A_1$, $\A_2$, $\pi_1$ and $\pi_2$
    which allow the existence of a ``principal symbol map",
    \begin{equation*}
        \sym:\Pi(\A_1,\A_2) \to \A_1\otimes_{\min}\A_2
    \end{equation*}
    where $\otimes_{\min}$ denotes the minimal $C^*$-norm on the algebraic tensor product $\A_1\otimes \A_2$ (see Section \ref{C^* norms theory} for details
    on this tensor product norm). We verify these conditions in a number of important cases, in particular for noncommutative tori
    and noncommutative Euclidean spaces (also known as the Moyal plane or Moyal-Groenwald plane in the two-dimensional case).
    
    This framework is indeed an abstraction of the
    results of \cite{Dao1}, which considered the special case $\A_1 = L_\infty(\Rl^d)$, $\A_2 = L_\infty(\Sb^{d-1})$
    acting on $H = L_2(\Rl^d)$. However, the need to consider noncommutative algebras necessitated the introduction of new techniques, motivating the present paper.

    We show that under certain assumptions (given in Theorem \ref{symbol def}), the map $\sym$ is a $C^*$-algebra homomorphism, and {\hightlight  according to our constructions}
    $\A_1\otimes_{\min} \A_2$ is isometrically $*$-isomorphic to the image of $\Pi(\A_1,\A_2)$
    in the Calkin algebra $\Bc(H)/\Kc(H)$, where $\Bc(H)$ is the $*$-algebra of bounded
    operators on $H$, and $\Kc(H)$ denotes the ideal of compact operators.
    The principal symbol map $\sym$ is then simply the restriction to $\Pi(\A_1,\A_2)$ of the canonical quotient map $\Bc(H)\to \Bc(H)/\Kc(H)$ (see the proof
    of Theorem \ref{symbol def} for details).
    
    The idea that the principal symbol map can be identified with a Calkin quotient map is {\hightlight  far from }new. One particular utilisation of this idea in the commutative case
    appears in the work of H. O. Cordes \cite{Cordes-Elliptic}. In noncommutative geometry, this observation is
    crucial to the definition of the index map on $K$-theory and has been central to the development of tools such as the noncommutative geodesic flow (such as in \cite{Golse-Leichtnam-1998}). An example
    of the use of this idea in a noncommutative context is \cite[Section 2.1]{Nazaikinskii-Savin-Sternin-2008}.

    A novelty of our approach is its simplicity. With a minimum of algebraic preliminaries we are able to provide
    a version of the principal symbol mapping and Connes' trace theorem which applies to certain settings important for noncommutative geometry.
    The given proofs of our versions of Connes' Trace Theorem are totally different from earlier proofs, including
    Connes' original proof of the Connes' Trace Theorem for compact Riemannian manifolds \cite[Theorem 1]{Connes-Action} and the more recent proofs
    in \cite{Alberti-Matthes-2002}, \cite{Kalton-Lord-Potapov-Sukochev-2013} and \cite[Corollary 11.5.4]{LSZ}.

    To demonstrate the efficiency of our framework for pseudodifferential operators and the principal symbol mapping $\sym$, we develop in detail our theory for pseudodifferential operators of order $0$ on the Lie group $\SU(2)$ and
    give a version of Connes' Trace Theorem for $\SU(2)$ (Theorem \ref{ctt sphere}). As $\SU(2)$ is a compact Riemannian manifold, this result should be compared with previous versions of Connes' Trace Theorem (such
    as \cite[Theorem 1]{Connes-Action} and \cite[Theorem 11.6.17]{LSZ}). We emphasise
    that the proof given in this paper for Connes' Trace Theorem for $\SU(2)$ is completely different
    to previously known proofs, and we are able to present the argument in a self-contained way while also making
    no assumptions about smoothness of symbols which were used crucially in all previously known proofs.
    
    Beyond the commutative setting, we also work out in detail the two most heavily studied noncommutative spaces: the noncommutative tori
    and the noncommutative {\hightlight  (Moyal)} Euclidean spaces. {\hightlight 
    Noncommutative tori have long been heavily studied as a landmark example in noncommutative geometry \cite{Rieffel-contemp-math-1990}, and there is an extensive literature for pseudodifferential operators on
    these spaces. In
    \cite{Connes-C-star-algebras-and-differential-geometry}, Connes developed a pseudodifferential operator theory for general $C^*$-dynamical systems, and in particular for the special
    case of noncommutative tori. Baaj studied pseudodifferential calculus on crossed product $C^*$-algebras \cite{Baaj-1,Baaj-2}.
    Later, Connes \cite[Chapter 4, Section 6.$\alpha$]{red-book} studied elliptic differential operators on noncommutative tori. There
    is a diversity of related work concerning pseudodifferential calculus in $C^*$-algebraic settings, we mention in particular \cite{Mantoiu-Purice-Richard-2005,Lein-Mantoiu-Richard-2010}.
    In 2018, Ha, Lee and Ponge \cite{Ha-Lee-Ponge-I-arxiv,Ha-Lee-Ponge-II-arxiv} have given a very detailed
    exposition of pseudodifferential operator theory on noncommutative tori including many historical details.
      
    There is a closely related and well developed theory of harmonic analysis and function spaces on noncommutative tori. Early work includes the development of $L_2$-Sobolev spaces
    on noncommutative tori by Spera \cite{Spera-sobolev-spaces-1991,Spera-yang-mills-1992}. In more recent years, function space theory on noncommutative tori has been greatly extended by Xia, Xiong, Xu and Yin to
    analogues of many other classical function spaces, including the entire scale of Sobolev spaces \cite{XXX,XXY}.     
    
    The development of a pseudodifferential calculus for noncommutative Euclidean spaces has also been pursued by multiple authors: in particular 
    one can find such a theory in \cite[Proposition 4.17]{Gayral-Gracia-Bondia-Iochum-Schucker-Varily-moyal-planes-2004} and \cite[Section 5]{CGRS2}. 
    In 2017 a powerful pseudodifferential operator theory for noncommutative Euclidean spaces was developed by Gonzalez-Perez, Junge and Parcet \cite{Gonzalez-perez-Junge-Parcet-arxiv}.
    
    }
        
    {\hightlight 
        Our approach to pseudodifferential operators on noncommutative tori and planes is complementary to, and not a replacement for, the other approaches given in the above references. 
        In our approach we do not handle operators of order greater than zero and in fact the notion of order of a pseudodifferential operator does not play a role in the theory developed in this paper. The benefit of restricting
        to operators of order at most zero is that in this setting powerful operator algebraic techniques become available and the proof of Connes' trace theorem is simple and transparent. 
    }
    
    {\hightlight 
        We would like to extend our appreciation to Rapha\"el Ponge for extensive discussion of the results of this paper and suggestions relating to the historical overview of this field.
    }

%
\section{Preliminaries}

\subsection{Notations and conventions}
    We use the convention $\Ntrl = \{0,1,2,\ldots\}$, and all Hilbert spaces
    are over the field of complex numbers. 
    
    Given a Hilbert space $H$, $\Bc(H)$ denotes the $*$-algebra of all bounded linear operators on $H$,
    and $\Kc(H)$ denotes the ideal of compact operators. 
    
    The Schatten ideal $\Lc_p(H)$, for $p \in [1,\infty)$, is the set of $T \in \Kc(H)$
    whose sequence of singular values $\{\mu(n,T)\}_{n=0}^\infty$ is in the sequence space $\ell_p$.
    Similarly, the weak Schatten ideal $\Lc_{p,\infty}(H)$ is the set of compact operators 
    whose sequence of singular values is in the space $\ell_{p,\infty}$. {\hightlight  These spaces are a classical object of study, and further details may be found in \cite{Gohberg-Krein,Simon-trace-ideals-2005} and }\cite[Chapter 2]{LSZ}.
    
    For $p \in [1,\infty)$, $L_p(\Rl^d)$ denotes the usual Lebesgue
    space of pointwise-almost-everywhere equivalence classes of $p$-absolutely Lebesgue integrable
    functions, equipped with the $L_p$-norm.

\subsection{$C^*-$norms on tensor products of $C^*-$algebras}\label{C^* norms theory}

    Given two $C^*$-algebras $\A_1$ and $\A_2$, we denote the algebraic tensor product as $\A_1\otimes \A_2$.
    The following results are {\hightlight very well known, and are} taken from \cite{Sakai} (see Theorem 1.22.6, Propositions 1.22.5 and 1.22.3 there). {\hightlight  See also \cite[Chapter IV, Section 4]{Takesaki_1}, \cite[Section II.9]{Blackadar-OA} and
    \cite[Section 11.3]{Kadison-Ringrose-II}.}

    \begin{thm}\label{sakai1} Let $\mathcal{A}_1$ and $\mathcal{A}_2$ be unital $C^*-$algebras. There are pre-$C^*-$norms on the algebraic tensor product $\mathcal{A}_1\otimes\mathcal{A}_2,$ and there exists a norm which is minimal.
    \end{thm}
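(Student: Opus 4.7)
The plan is to produce an explicit pre-$C^*$-norm on $\A_1\otimes\A_2$ via faithful representations (the so-called spatial norm), and then to show that any other pre-$C^*$-norm dominates it.

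For existence, I would fix faithful representations $\pi_i:\A_i\hookrightarrow \Bc(H_i)$ for $i=1,2$; these always exist, either as universal GNS representations or as the GNS representation of a single faithful state. The external tensor product yields a $*$-homomorphism $\pi_1\otimes\pi_2:\A_1\otimes\A_2\to \Bc(H_1\otimes H_2)$. The crucial step is injectivity: writing $x=\sum_{i=1}^n a_i\otimes b_i$ with both $\{a_i\}$ and $\{b_i\}$ linearly independent, the vanishing of $(\pi_1\otimes\pi_2)(x)$ forces every slice $\sum_i \langle\pi_2(b_i)\xi,\eta\rangle\,\pi_1(a_i)$ (for $\xi,\eta\in H_2$) to vanish; linear independence of $\{\pi_2(b_i)\}$ in $\Bc(H_2)$ lets me pick $n$ pairs $(\xi_k,\eta_k)$ making the matrix $[\langle\pi_2(b_i)\xi_k,\eta_k\rangle]_{i,k}$ invertible, which forces $\pi_1(a_i)=0$ and hence $a_i=0$ by faithfulness. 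Transporting the operator norm along this injection defines the spatial pre-$C^*$-norm, which I will denote $\|\cdot\|_{\min}$; the $C^*$-identity and submultiplicativity pass across the isomorphism automatically.

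For minimality, given an arbitrary pre-$C^*$-norm $\alpha$ on $\A_1\otimes\A_2$, the task is to prove $\|x\|_{\min}\le \alpha(x)$ for every $x$. I would realize $\|\cdot\|_{\min}$ as a supremum of $\alpha$-contractive seminorms indexed by pairs of pure states: for pure $\phi$ on $\A_1$ and pure $\psi$ on $\A_2$, the product functional $\phi\otimes\psi$ extends to a positive $\alpha$-continuous state on the completion $\A_1\otimes_\alpha\A_2$, and its GNS representation can be identified with (a subrepresentation of) $\pi_\phi\otimes\pi_\psi$ on $H_\phi\otimes H_\psi$. Taking the direct sum over all such pure product states yields a faithful representation of $\A_1\otimes\A_2$ whose norm coincides with $\|\cdot\|_{\min}$ — one checks this by noting that the direct sum of all pure-state GNS representations of $\A_i$ is itself a faithful representation of $\A_i$, hence an admissible choice for $\pi_i$ in the definition of the spatial norm.

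The principal obstacle is precisely the extension of $\phi\otimes\psi$ to an $\alpha$-continuous positive functional: positivity on simple tensors is immediate, but on a general element it is the substantive content of a classical argument of Takesaki, which exploits the irreducibility of $\pi_\phi$ (coming from purity of $\phi$) to show that the relative commutant of $\pi_\phi(\A_1)$ inside the GNS completion acts as scalars on the first factor, yielding the required tensor factorization for positive elements. Once this slice-positivity step is in place, everything else — existence of faithful representations, the injectivity argument, verification of the norm axioms, and independence of $\|\cdot\|_{\min}$ from the specific choice of the $\pi_i$ — reduces to standard $C^*$-algebraic bookkeeping.
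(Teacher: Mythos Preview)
The paper does not give a proof of this theorem; it is quoted as a standard result from Sakai (Theorem~1.22.6 and Propositions~1.22.5, 1.22.3), with parallel references to Takesaki, Blackadar, and Kadison--Ringrose. Your outline is precisely the classical argument found in those references: construct the spatial norm via faithful representations, then prove minimality by showing that every pure product state $\phi\otimes\psi$ is automatically contractive for an arbitrary pre-$C^*$-norm $\alpha$, so that the direct sum of the associated GNS representations dominates any $\alpha$-representation. You have correctly isolated the one genuinely nontrivial step --- the $\alpha$-continuity (equivalently, positivity in the $\alpha$-completion) of $\phi\otimes\psi$, which is exactly Takesaki's irreducibility/commutant argument --- and everything else is, as you say, routine. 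There is nothing to add or correct.
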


    The completion of $\mathcal{A}_1\otimes\mathcal{A}_2$ with respect to the minimal $C^*-$norm is denoted by $\mathcal{A}_1\otimes_{{\rm min}}\mathcal{A}_2.$

    \begin{thm}\label{sakai2} Let $\mathcal{A}_1$ and $\mathcal{A}_2$ be unital $C^*-$algebras. If $\mathcal{A}_2$ is commutative, then there exists a unique pre-$C^*-$norm on $\mathcal{A}_1\otimes\mathcal{A}_2$ (which we may take to be the minimal one).
    \end{thm}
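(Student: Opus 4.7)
The plan is to use Gelfand duality to reduce to the case $\mathcal{A}_2 = C(X)$ for some compact Hausdorff space $X$ and to identify $\mathcal{A}_1 \otimes C(X)$ with a dense $*$-subalgebra of $C(X,\mathcal{A}_1)$; this will force every pre-$C^*$-norm to coincide with the uniform norm on vector-valued continuous functions. Concretely, after using Gelfand--Naimark to identify $\mathcal{A}_2 \cong C(X)$, I would introduce the linear map $\Phi:\mathcal{A}_1\otimes C(X)\to C(X,\mathcal{A}_1)$ determined by $\Phi(a\otimes f)(x)=f(x)a$. A direct check shows $\Phi$ is an injective $*$-homomorphism, and a vector-valued Stone--Weierstrass argument --- given $F\in C(X,\mathcal{A}_1)$ and $\varepsilon>0$, cover $X$ by finitely many open sets on which $F$ varies by less than $\varepsilon$, pick a subordinate partition of unity $\{\chi_k\}$ and reference points $x_k$, and observe $\|F-\sum_k \chi_k\, F(x_k)\|_\infty<\varepsilon$ --- establishes that $\Phi(\mathcal{A}_1\otimes C(X))$ is uniformly dense in $C(X,\mathcal{A}_1)$. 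Pulling back the $C^*$-norm produces at least one pre-$C^*$-norm, namely $\|t\|_c:=\|\Phi(t)\|_\infty$, whose completion is isometrically $*$-isomorphic to $C(X,\mathcal{A}_1)$.

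Next, for one direction of the uniqueness, let $\alpha$ be any pre-$C^*$-norm on $\mathcal{A}_1\otimes C(X)$. For each $x\in X$ the evaluation map $\mathrm{ev}_x:\mathcal{A}_1\otimes C(X)\to\mathcal{A}_1$, $a\otimes f\mapsto f(x)a$, is a $*$-homomorphism, so it extends to a contraction from the $\alpha$-completion to $\mathcal{A}_1$ by the general fact that $*$-homomorphisms between $C^*$-algebras are norm-decreasing. Taking the supremum over $x$ yields $\|t\|_c\leq\alpha(t)$ for every $t$.

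The main obstacle is the reverse inequality $\alpha(t)\leq\|t\|_c$. It suffices to prove that for every $*$-representation $\pi:\mathcal{A}_1\otimes C(X)\to\mathcal{L}(H)$ one has $\|\pi(t)\|\leq\|\Phi(t)\|_\infty$, since $\alpha(t)$ is attained by some faithful representation of the $\alpha$-completion. The key observation is that $\pi(1\otimes C(X))$ is a commutative $C^*$-subalgebra of $\mathcal{L}(H)$ in whose commutant $\pi(\mathcal{A}_1\otimes 1)$ lies; a spectral direct-integral decomposition $H\cong\int^\oplus_X H_x\,d\mu(x)$ then diagonalises $\pi(1\otimes f)$ as fibrewise multiplication by $f(x)$, while $\pi(\mathcal{A}_1\otimes 1)$ acts fibrewise via a representation $\pi_x$ of $\mathcal{A}_1$. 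Consequently $\|\pi(t)\| = \operatorname{ess\,sup}_x \|\pi_x(\Phi(t)(x))\| \leq \|\Phi(t)\|_\infty$. Justifying the direct-integral decomposition cleanly is the genuinely nontrivial step; an equivalent shortcut would be to quote the classical theorem that commutative $C^*$-algebras are nuclear, but that essentially absorbs the entire content of the claim.
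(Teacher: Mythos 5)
The paper does not prove Theorem \ref{sakai2}: it cites Sakai \cite{Sakai} (Theorem 1.22.6) and remarks immediately afterwards that the result amounts to the nuclearity of commutative $C^*$-algebras. Your proposal is therefore a reconstruction of a proof the paper deliberately black-boxes; it follows the standard textbook route, and you are right to flag the direct-integral disintegration at the end as the genuinely hard part (making it rigorous --- measurable fields of Hilbert spaces, reduction to separable cyclic representations --- is essentially the proof that $C(X)$ is nuclear).

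There is, however, a real error in the direction you present as easy. To get $\|t\|_c\leq\alpha(t)$ you argue that $\mathrm{ev}_x:\mathcal{A}_1\otimes C(X)\to\mathcal{A}_1$ ``extends to a contraction from the $\alpha$-completion to $\mathcal{A}_1$ by the general fact that $*$-homomorphisms between $C^*$-algebras are norm-decreasing.'' That fact concerns $*$-homomorphisms whose \emph{domain} is already a $C^*$-algebra. Here the domain $\mathcal{A}_1\otimes C(X)$ carries $\alpha$ only as a pre-$C^*$-norm, i.e.\ it is a dense $*$-subalgebra of its completion, and a $*$-homomorphism out of a dense $*$-subalgebra of a $C^*$-algebra into another $C^*$-algebra need not be bounded: $\mathbb{C}[t]$ with the supremum norm on $[0,1]$ is a dense $*$-subalgebra of $C([0,1])$, yet the $*$-homomorphism $p\mapsto p(2)$ onto $\mathbb{C}$ is unbounded. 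So the $\alpha$-boundedness of $\mathrm{ev}_x=\mathrm{id}\otimes\delta_x$ genuinely requires proof. The clean route is to first identify $\|\cdot\|_c$ with the minimal $C^*$-norm and then invoke the minimality half of Theorem \ref{sakai1} (every $C^*$-norm on the algebraic tensor product dominates the minimal one); the contractivity of $\mathrm{id}\otimes\delta_x$ against an arbitrary $C^*$-norm is precisely a special case of that nontrivial theorem, and cannot be obtained ``for free'' from the completeness of $\mathcal{A}_1$. As it stands, both halves of your sketch silently lean on substantial theorems (minimality for the easy direction, nuclearity of $C(X)$ for the hard one), which is defensible at the level of a sketch but means the argument is not actually self-contained.
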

    The above theorem is {\hightlight essentially a statement of the fact that commutative $C^*$-algebras are nuclear (see \cite[Section II.9.4]{Blackadar-OA} and \cite[Section 11.3]{Kadison-Ringrose-II})}.

    \begin{thm}\label{sakai3} 
        Let $\mathcal{A}_1$ and $\mathcal{A}_2$ be unital $C^*-$algebras. If $\mathcal{A}_2$ is commutative (read $\mathcal{A}_2=C(X)$ for some compact Hausdorff space $X$), then $\mathcal{A}_1\otimes_{{\rm min}}\mathcal{A}_2$ is
        isometrically $*$-isomorphic to $C(X,\mathcal{A}_1).$
    \end{thm}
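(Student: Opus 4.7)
The plan is to construct a natural map
\[
\iota: \A_1 \otimes C(X) \to C(X,\A_1), \qquad \iota\left(\sum_i a_i \otimes f_i\right)(x) = \sum_i f_i(x) a_i,
\]
and show it extends to an isometric $*$-isomorphism on the minimal tensor product. First I would verify that $C(X,\A_1)$, with pointwise operations and the sup norm $\|F\| = \sup_{x\in X}\|F(x)\|_{\A_1}$, is a unital $C^*$-algebra; the $C^*$-identity follows from the pointwise identity, $\|F^*F\| = \sup_x \|F(x)\|^2 = \|F\|^2$. The map $\iota$ is a unital $*$-homomorphism by construction.

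Second, I would show $\iota$ is injective. Given $u = \sum_{i=1}^n a_i \otimes f_i$ with the $f_i$ linearly independent in $C(X)$, linear algebra produces points $x_1,\ldots,x_n \in X$ making the matrix $(f_i(x_j))_{i,j}$ invertible. If $\iota(u)$ vanishes identically, then $\sum_i f_i(x_j) a_i = 0$ for all $j$, and inverting this matrix forces every $a_i = 0$. Consequently the pullback $\|u\|' := \|\iota(u)\|_{C(X,\A_1)}$ is a pre-$C^*$-norm on the algebraic tensor product. By Theorem \ref{sakai2}, there is a unique pre-$C^*$-norm on $\A_1 \otimes C(X)$, so $\|\cdot\|'$ coincides with $\|\cdot\|_{\min}$, and $\iota$ extends to an isometric $*$-homomorphism $\bar\iota:\A_1\otimes_{\min}C(X)\to C(X,\A_1)$.

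Third, I would establish that $\bar\iota$ has dense range, which together with being isometric yields surjectivity. Given $F \in C(X,\A_1)$ and $\varepsilon>0$, the uniform continuity of $F$ on the compact space $X$ produces a finite open cover $\{U_i\}_{i=1}^N$ and points $x_i\in U_i$ with $\|F(x)-F(x_i)\|_{\A_1}<\varepsilon$ for $x\in U_i$. Choosing a partition of unity $\{\phi_i\}$ subordinate to $\{U_i\}$, the element $\sum_i F(x_i)\otimes \phi_i$ lies in the algebraic tensor product and its image under $\iota$ is the function $x\mapsto \sum_i \phi_i(x)F(x_i)$, which differs from $F(x)$ in norm by at most $\sum_i \phi_i(x)\|F(x)-F(x_i)\| \le \varepsilon$ uniformly in $x$.

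There is no genuine obstacle here: the argument proceeds by recognising $C(X,\A_1)$ as a natural $C^*$-algebra target, and then leveraging the already-established uniqueness of the $C^*$-norm (Theorem \ref{sakai2}) to bypass any direct manipulation of the minimal norm. The only mildly non-trivial step is the injectivity of $\iota$, which is a clean finite-dimensional linear algebra argument, and the density step, which is a routine partition-of-unity approximation.
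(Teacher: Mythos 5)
Your proposal is correct and takes essentially the same route as the paper: the paper's one-line justification is precisely that $\A_1 \otimes C(X)$ embeds as a dense $*$-subalgebra of $C(X,\A_1)$ and that the resulting pre-$C^*$-norm must be the minimal one by Theorem \ref{sakai2}. You have simply filled in the details (injectivity via the finite-point evaluation matrix, density via a partition of unity) that the paper leaves as ``the observation.''
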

    {\hightlight The above theorem immediately follows from Theorem \ref{sakai2} and the observation that $\A_1\otimes C(X)$ can be identified with a dense subalgebra of the $C^*$-algebra $C(X,\A_1)$.}

    {\hightlight  
    It is also well-known that the tensor product of continuous linear functionals is continuous on the tensor product algebra with minimal norm. We include the following simple proof for convenience. }
    \begin{thm} \label{linear functional tensor}
        Let $\A_1$ and $\A_2$ be $C^*$-algebras, and let $\psi_1 \in \A_1^*$ and $\psi_2 \in \A_2^*$. Then the tensor product $\psi_1\otimes\psi_2$ extends continuously
        to $\A_1\otimes_{\min} \A_2$.
    \end{thm}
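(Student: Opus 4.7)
The plan is to reduce the claim to the case of states via a polar-decomposition argument and then use the defining property of the minimal tensor norm, namely that it is spatial. I will proceed in three steps.

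First, I would reduce to the case of positive functionals. For any bounded linear functional $\psi$ on a unital $C^*$-algebra, write $\psi = (\varphi_1 - \varphi_2) + i(\varphi_3 - \varphi_4)$, where each $\varphi_j$ is a positive linear functional (equivalently, a non-negative multiple of a state), with $\sum_j \|\varphi_j\| \leq 2\|\psi\|$ from the standard Jordan decomposition of hermitian functionals plus a splitting into real and imaginary parts. Applying this to both $\psi_1$ and $\psi_2$ and expanding $\psi_1 \otimes \psi_2$ into a sum of sixteen tensor products of positive functionals, it suffices to show that $\varphi \otimes \varphi'$ extends continuously to $\A_1 \otimes_{\min} \A_2$ whenever $\varphi, \varphi'$ are positive, and indeed with norm $\|\varphi\|\,\|\varphi'\|$.

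Second, I would realise the positive functionals as vector states via the GNS construction. Given positive $\varphi$ on $\A_1$ and $\varphi'$ on $\A_2$, form the GNS triples $(\rho_1, H_1, \xi_1)$ and $(\rho_2, H_2, \xi_2)$, so that $\varphi(a) = \langle \rho_1(a)\xi_1, \xi_1\rangle$ and $\varphi'(b) = \langle \rho_2(b)\xi_2, \xi_2\rangle$, with $\|\xi_j\|^2 = \|\varphi^{(j)}\|$. Form the Hilbert space tensor product $H_1 \otimes H_2$ and consider the $*$-representation $\rho_1 \otimes \rho_2$ of the algebraic tensor product $\A_1 \otimes \A_2$ on $H_1 \otimes H_2$.

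Third, I would appeal to the defining property of the minimal $C^*$-norm. By construction, the minimal tensor norm on $\A_1 \otimes \A_2$ is dominated by (in fact equal to the supremum of) the operator norm coming from tensor products of any faithful representations; in particular, $\rho_1 \otimes \rho_2$ extends to a bounded $*$-representation of $\A_1 \otimes_{\min} \A_2$ of norm at most $1$. Consequently, for any $x \in \A_1 \otimes \A_2$,
\begin{equation*}
    |(\varphi \otimes \varphi')(x)| = |\langle (\rho_1 \otimes \rho_2)(x)(\xi_1 \otimes \xi_2), \xi_1 \otimes \xi_2\rangle| \leq \|\xi_1\|^2\,\|\xi_2\|^2\,\|x\|_{\min} = \|\varphi\|\,\|\varphi'\|\,\|x\|_{\min}.
\end{equation*}
This gives the required continuity, and combining with the decomposition from Step 1 yields the theorem.

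The only delicate point is the assertion used in Step 3, that a spatial tensor product of representations is bounded by the minimal norm. Since this is essentially the definition (or an immediate consequence of the universal property) of $\otimes_{\min}$ as recalled from \cite{Sakai}, this step is quoted rather than re-proved. The substantive content of the argument is therefore the GNS realisation step, which reduces the continuity of $\psi_1 \otimes \psi_2$ to the trivial estimate $|\langle T\eta, \eta\rangle| \leq \|T\|\,\|\eta\|^2$ in the tensor product Hilbert space.
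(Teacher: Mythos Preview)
Your argument is correct. It takes a genuinely different route from the paper's proof. The paper proceeds in one line: after normalising, it bounds $|(\psi_1\otimes\psi_2)(T)|$ by the least cross norm $\lambda(T)=\sup\{|(\alpha\otimes\beta)(T)|:\|\alpha\|=\|\beta\|=1\}$ and then cites \cite[Proposition~1.22.2]{Sakai} for the inequality $\lambda(T)\le\|T\|_{\min}$. Your approach instead decomposes each functional into positive parts, realises those as vector states via GNS, and then invokes the spatial characterisation of $\|\cdot\|_{\min}$ to bound the resulting vector functional. In effect you are unpacking what lies behind the Sakai inequality $\lambda\le\|\cdot\|_{\min}$, so your proof is more self-contained but correspondingly longer; the paper's proof is shorter but defers the substantive step to the reference. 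One small remark: in Step~3 the GNS representations $\rho_1,\rho_2$ need not be faithful, so the clause ``any faithful representations'' does not literally apply; the needed fact is that tensor products of \emph{arbitrary} representations are $\|\cdot\|_{\min}$-contractive, which follows by embedding each $\rho_j$ into a faithful representation or by the functoriality of $\otimes_{\min}$ under quotient maps. You correctly flag this as a quoted fact rather than something you re-prove.
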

    \begin{proof}
        First we may normalise $\|\psi_1\|_{\A_1^*} = \|\psi_2\|_{\A_2^*} = 1$. Then for $T \in \A_1\otimes \A_2$,
        \begin{equation*}
            |(\psi_1\otimes\psi_2)(T)| \leq \sup_{\alpha\otimes\beta \in \A_1^*\otimes \A_2^*, \|\alpha\|=\|\beta\|=1} |(\alpha\otimes\beta)(T)|.
        \end{equation*}
        The right hand side is the \emph{least cross norm} on the algebraic tensor product $\A_1\otimes \A_2$ denoted $\lambda(T)$, and from \cite[Proposition 1.22.2]{Sakai}, we have that $\lambda(T) \leq \|T\|_{\min}$. 
    \end{proof}

\subsection{Noncommutative torus}\label{nc torus definition section}
    Here, we present the definition and basic properties of the noncommutative $d$-torus. {\hightlight  There are many detailed expositions of this theory in the literature: we refer the reader in particular to \cite{Ha-Lee-Ponge-I-arxiv,XXY} and \cite[Chapter 12]{green-book}.}

    {\hightlight Let $d\geq 2$ be an integer, }and let $\theta$ be a real $d\times d$ antisymmetric matrix. The following defines the noncommutative $d$-torus,
    \begin{defi}\label{nc torus definition}
        The noncommutative $d$-torus $C(\Circ^d_\theta)$ is the universal $C^*$-algebra generated by a family of unitaries $\{u_n\}_{n \in \Itgr^d}$ satisfying the relation,
        \begin{equation*}
            u_{n}u_{m} = e^{\frac{i}{2}(n,\theta m)}u_{n+m},\,\,n,m \in \Itgr^d.
        \end{equation*}
        
        On the subspace given by the linear span of $\{u_n\}_{n \in \Itgr}$, we define the functional $\tau_\theta$,
        \begin{equation*}
            \tau_\theta\left(\sum_{k \in \Itgr^d} c_ku_k\right) := c_0.
        \end{equation*}
        The functional $\tau_\theta$ can be extended norm-continuously to all of $C(\Circ^d_\theta)$,
        and also to the enveloping von Neumann algebra which we denote $L^\infty(\Circ^d_\theta)$.
        We denote the corresponding $L_p$-spaces defined by $\tau_\theta$ as $L^p(\Circ^d_\theta)$.
        Note that in particular $L_2(\Circ^d_\theta)$ is the GNS-space for $L_\infty(\Circ^d_\theta)$ defined
        by $\tau_\theta$, with inner product:
        \begin{equation*}
            \langle x,y\rangle = \tau_\theta(x^*y).
        \end{equation*}
    \end{defi}

    \begin{defi}
        Let $j = 1,\ldots,d$. We define the operator $\partial_j$ on the linear span of $\{u_n\}_{n\in \Itgr^d}$, for each $k \in \Itgr^d$ by
        \begin{equation*}
            \partial_j (u_k) = ik_j u_k.
        \end{equation*}
        The operator $D_j := -i\partial_j$ extends to a self-adjoint densely defined operator on $L_2(\Circ^d_\theta)$. 
        Given a multi-index $\alpha \in \Ntrl^d$, we define $\nabla^\alpha := \partial_1^{\alpha_1}\partial_2^{\alpha_2}\cdots\partial_d^{\alpha_d}$.
        
        The gradient operator is defined as $\nabla := (D_1,D_2,\ldots,D_d)$, which may be considered as a linear operator from $L_2(\Circ^d_\theta)$ to $L_2(\Circ^d_\theta)\otimes\Cplx^d$.
    \end{defi}

\subsection{Noncommutative Euclidean space}\label{nc plane definition section}
    {\hightlight  Noncommutative Euclidean spaces appear in many different places in the literature under various names, such as Moyal product algebras \cite{Gayral-Gracia-Bondia-Iochum-Schucker-Varily-moyal-planes-2004}, quantum Euclidean spaces \cite{Gonzalez-perez-Junge-Parcet-arxiv} or canonical commutatation relation (CCR) algebras \cite[Section 5.2.2.2]{Bratteli-Robinson2}. 
    A detailed exposition of the operator theoretic details may be found in \cite[Section 5.2.2.2]{Bratteli-Robinson2},
    an exposition from the noncommutative geometric point of view is in \cite{Gayral-Gracia-Bondia-Iochum-Schucker-Varily-moyal-planes-2004} and more recently there is also \cite{Gonzalez-perez-Junge-Parcet-arxiv}
    from a harmonic analysis perspective. 
    
    The exposition we use here follows \cite{LeSZ_cwikel} although we skip many proofs. For details and proofs, we refer the reader to \cite[Section 6.1]{LeSZ_cwikel}.}

    For this section, {\hightlight we again take an integer $d \geq 2$} and define $\theta$ to be a non-degenerate $d\times d$ real antisymmetric matrix\footnote{Note that if $\det(\theta) \neq 0$ then $d$ is automatically even}. Our
    approach is to define the Noncommutative Euclidean space (also known as the Moyal plane)
    in terms of a certain family of unitary operators $\{U(t)\}_{t \in \Rl^d}$ {\hightlight  on the Hilbert space $L_2(\Rl^d)$.}

    \begin{defi}\label{nc plane definition}
        Let $t \in \Rl^d$. We define the following linear operator on $L_2(\Rl^d)$,
        \begin{equation*}
            (U(t)\xi)(u) = e^{-\frac{i}{2}(t,\theta u)}\xi(u-t),\quad {\hightlight  \xi \in L_2(\Rl^d).}
        \end{equation*}
        Then $\{U(t)\}_{t \in \Rl^d}$ is a strongly continuous family of unitary operators satisfying
        \begin{equation}\label{canonical commutation relations}
            U(t)U(s) = e^{\frac{i}{2}(t,\theta s)}U(t+s).
        \end{equation}
        The algebra $L_\infty(\Rl^d_\theta)$ is then defined to be the von Neumann algebra generated by $\{U(t)\}_{t \in \Rl^d}$. 
        
        Denote the representation of $L_\infty(\Rl^d_\theta)$ on $L_2(\Rl^d)$ as $\pi_1$\footnote{$\pi_1$ is simply the identity mapping}.
                
        It is known (see \cite{SZ_cif}) that there is an isometric $*$-isomorphism from $\Bc(L_2(\Rl^{d/2}))\to L_\infty(\Rl^d_\theta)$.
        Denote the image of the compact operators $\Kc(L_2(\Rl^{d/2}))$ under this isomorphism
        $C_0(\Rl^d_\theta)$.
        The standard trace $\Tr$ on $\Bc(L_2(\Rl^{d/2}))$ then induces a semifinite trace on the algebra $L_\infty(\Rl^d_\theta)$, which we denote as $\tau_\theta$.
        
        Finally, let $L_2(\Rl^d_\theta)$ be defined as the GNS space of $L_\infty(\Rl^d_\theta)$ with respect to $\tau_\theta$.
    \end{defi}

    \begin{rem}
        We also note that if we formally take $\theta = 0$ in Definition \ref{nc plane definition} we recover the commutative algebra $L_\infty(\Rl^d)$. However
        our definitions of $\tau_\theta$ and $C_0(\Rl^d_\theta)$ rely on the nondegeneracy of $\theta$. {\hightlight  A unified exposition for all $\theta$ is also possible, and this is achieved
        in \cite{Gonzalez-perez-Junge-Parcet-arxiv}.}
    \end{rem}
    
    {\hightlight  With our chosen concrete representation of $L_\infty(\Rl^d_\theta)$, defining the differentiation operators $\partial_1,\ldots,\partial_d$
    is particularly simple.}
    \begin{defi}
        For $k = 1,\ldots,n$, let $\partial_k$ denote the multiplication operators on $L_2(\Rl^d)$,
        \begin{equation*}
            D_k\xi(t) = t_k\xi(t).
        \end{equation*}
        We define the operators $\partial_kx$, $k=1,\ldots,d$ by
        \begin{equation*}
            \partial_kx := i[D_k,x].
        \end{equation*}
        There exists a dense subspace $\Dc \subset L_2(\Rl^d_\theta)$ such that the operators $\partial_k, k = 1,\ldots,d$ may be considered as 
        self-adjoint operators on $L_2(\Rl^d_\theta)$ with common core $\Dc$.
        We denote $\nabla = (\partial_1,\partial_2,\ldots,\partial_d)$, considered as a self-adjoint linear operator from $L_2(\Rl^d_\theta)$ to $L_2(\Rl^d_\theta)\otimes\Cplx^d$. For a multi-index $\alpha$,
        define
        \begin{equation*}
            \nabla^{\alpha} := \partial_1^{\alpha_1}\partial_2^{\alpha_2}\cdots \partial_d^{\alpha_d}
        \end{equation*}
        which is also considered as a self-adjoint operator on $L^2(\Rl^d_\theta)$.
        
        {\hightlight  For an essentially bounded function $g \in L_\infty(\Rl^d)$, we denote $g(\nabla)$ for the operator
        \begin{equation*}
            (g(\nabla)\xi)(t) = g(t)\xi(t),\quad t \in \Rl^d.
        \end{equation*}
        }
    \end{defi}
    {\hightlight  One should be careful to distinguish $g(\nabla)$ as defined above and the Fourier multiplier $g(\nabla)$ defined in the commutative situation, as in the introduction. If we formally take $\theta = 0$, then $L_\infty(\Rl^d_\theta)$ acts as Fourier multiplication on $L_2(\Rl^d)$, and so these definitions are ``Fourier dual" to each other.}
    
    \begin{defi}
        With $\tau_\theta$ we can define $L^p$-spaces associated to $L_\infty(\Rl^d_\theta)$ with the norm:
        \begin{equation*}
            \|x\|_{p} := \tau_\theta(|x|^p)^{1/p},\quad x \in L_\infty(\Rl^d_\theta).
        \end{equation*}
        Then denote $L_p(\Rl^d_\theta)$ for the corresponding $L_p$-space.
        Note that this is consistent with our definition of $L_2(\Rl^d_\theta)$ as a GNS-space.
        
        The corresponding Sobolev space, $W^{k,p}(\Rl^d_\theta)$ is defined to be the set of $x \in L^p(\Rl^d_\theta)$
        with $\nabla^{\alpha} x \in L^p(\Rl^d_\theta)$ for all $|\alpha| \leq k$. The $W^{k,p}$ norm
        is the sum of the $L_p$ norms of $\nabla^{\alpha}x$ for all $0 \leq |\alpha| \leq k$.
    \end{defi}
    {\hightlight  Sobolev spaces for noncommutative Euclidean spaces are also defined in \cite[Section 3.2.2]{Gonzalez-perez-Junge-Parcet-arxiv}.}

\subsubsection{Cwikel-type estimates for Noncommutative Euclidean Space}
    The following is \cite[Proposition 6.15({\it v})]{LeSZ_cwikel},
    \begin{lem}\label{ncp density lemma} $W^{m,2}(\mathbb{R}^d_{\theta})$ is a norm-dense subset of $C_0(\mathbb{R}^d_{\theta})$ for every $m\geq0.$
    \end{lem}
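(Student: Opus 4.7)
The plan is to approximate an arbitrary $x \in C_0(\Rl^d_\theta)$ in two stages: first reduce to a Hilbert-Schmidt element in operator norm, then apply a convolution-type smoothing to produce an element of $W^{m,2}(\Rl^d_\theta)$.

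For the first stage, the identification $L_\infty(\Rl^d_\theta) \cong \Bc(L_2(\Rl^{d/2}))$ from Definition \ref{nc plane definition} carries $C_0(\Rl^d_\theta)$ onto $\Kc(L_2(\Rl^{d/2}))$ and $L_2(\Rl^d_\theta)$ (the GNS space for $\tau_\theta$) onto the Hilbert-Schmidt ideal. Since finite-rank operators are Hilbert-Schmidt and operator-norm dense in $\Kc$, $L_2(\Rl^d_\theta)$ is operator-norm dense in $C_0(\Rl^d_\theta)$, which settles the case $m = 0$.

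For the smoothing, I would introduce the dual action $\delta \colon \Rl^d \to \mathrm{Aut}(L_\infty(\Rl^d_\theta))$ determined on generators by $\delta_u(U(t)) := e^{i(u,t)}\, U(t)$; the relation \eqref{canonical commutation relations} is preserved, so $\delta$ extends to a one-parameter group of $*$-automorphisms. Using nondegeneracy of $\theta$ and the identity $U(c)\, U(t)\, U(c)^* = e^{i(c,\theta t)}\, U(t)$ one verifies $\delta_u = \mathrm{Ad}(U(-\theta^{-1}u))$, so each $\delta_u$ acts isometrically on $L_2(\Rl^d_\theta)$; combined with strong continuity of $s \mapsto U(s)$ on $L_2(\Rl^d)$ and compactness of $x \in C_0(\Rl^d_\theta)$, this yields operator-norm continuity of $u \mapsto \delta_u(x)$. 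A direct computation starting from $U(t)\, D_k\, U(t)^* = D_k - t_k$ gives $[D_k, U(t)] = t_k\, U(t)$, hence $\partial_k\, U(t) = i t_k\, U(t) = \tfrac{d}{ds}|_{s=0}\,\delta_{se_k}(U(t))$, identifying $\partial_k$ as the infinitesimal generator of $s \mapsto \delta_{se_k}$. For $\phi \in \Sc(\Rl^d)$ define
\begin{equation*}
    \phi \star x := \int_{\Rl^d} \phi(u)\, \delta_u(x)\, du,
\end{equation*}
so that $\|\phi \star x\|_\infty \leq \|\phi\|_{L_1}\|x\|_\infty$ and, for $x \in L_2(\Rl^d_\theta)$, also $\|\phi \star x\|_2 \leq \|\phi\|_{L_1}\|x\|_2$. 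From the change-of-variable identity $\delta_{se_k}(\phi \star x) = \bigl(\phi(\cdot - se_k)\bigr) \star x$ and the limit $\tfrac{1}{s}\bigl(\phi(\cdot - se_k) - \phi\bigr) \to -\partial_k\phi$ in $L_1(\Rl^d)$ one obtains $\partial_k(\phi \star x) = -(\partial_k\phi) \star x$ in operator norm; iterating, $\nabla^\alpha(\phi \star x) = (-1)^{|\alpha|}(\nabla^\alpha\phi) \star x \in L_2(\Rl^d_\theta)$ for every multi-index $\alpha$, so $\phi \star x \in W^{m,2}(\Rl^d_\theta)$ for every $m$.

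Finally, taking $\phi_\epsilon$ a smooth nonnegative approximate identity on $\Rl^d$, operator-norm continuity of $\delta$ on $C_0(\Rl^d_\theta)$ yields $\phi_\epsilon \star x \to x$ in operator norm for every $x \in C_0(\Rl^d_\theta)$. Combined with the first-stage reduction (replace $x$ by a nearby $y \in L_2(\Rl^d_\theta)$ and then smooth $y$ to obtain $\phi_\epsilon \star y \in W^{m,2}(\Rl^d_\theta)$), this completes the proof. The step I expect to be most delicate is establishing that $\partial_k$ coincides with the generator of the dual action at the level of the algebra, together with the careful justification of differentiating the Bochner integral; once this is in place, the argument reduces to a standard mollifier manipulation.
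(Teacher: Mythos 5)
Note first that the paper does not prove this lemma; it is cited directly from \cite[Proposition 6.15(v)]{LeSZ_cwikel}, so there is no internal argument to compare against, only the question of whether your self-contained proof is sound. It is, and the route is natural: under the isomorphism $L_\infty(\Rl^d_\theta)\cong\Bc(L_2(\Rl^{d/2}))$ carrying $C_0(\Rl^d_\theta)$ to the compacts and $L_2(\Rl^d_\theta)$ to the Hilbert--Schmidt class, finite-rank elements are operator-norm dense in $C_0$ and lie in $L_2$, giving the first reduction; the dual action $\delta_u=\mathrm{Ad}(U(-\theta^{-1}u))$ is inner, hence norm-continuous on $C_0$, trace-preserving, and unitary on $L_2$, so mollification $\phi\star x$ for $\phi\in\Sc(\Rl^d)$ puts $x\in L_2$ into every $W^{m,2}$ while a smooth approximate identity recovers $x$ in operator norm.

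The one point you flag --- that $\partial_k$ is the generator of $s\mapsto\delta_{se_k}$ --- is indeed where care is needed, and verifying it on the $U(t)$ alone does not suffice, since the $U(t)$ have infinite trace and are not vectors in $L_2(\Rl^d_\theta)$. The cleanest way to close this gap uses your own computation $[D_k,U(t)]=t_kU(t)$: it shows $e^{isD_k}U(t)e^{-isD_k}=e^{ist_k}U(t)=\delta_{se_k}(U(t))$, so $\delta_{se_k}$ is implemented on the whole algebra (by $\sigma$-weak density) and hence on $L_2(\Rl^d_\theta)$ by conjugation with the multiplication operator $e^{isD_k}$; its Stone generator on $L_2$ is then $L_{D_k}-R_{D_k}=[D_k,\cdot]$, which is the paper's $\partial_k$ up to the factor making it self-adjoint, with no need to test on non-$L_2$ vectors. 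With that in place, your difference-quotient argument places $\phi\star x$ in the domain of $\partial_k$ on $L_2$ and gives $\partial_k(\phi\star x)=-(\partial_k\phi)\star x$. One small precision: you state this identity ``in operator norm,'' but membership in $W^{m,2}$ requires the convergence in $L_2(\Rl^d_\theta)$-norm; this is exactly what the bound $\|\psi\star x\|_2\le\|\psi\|_{L_1}\|x\|_2$ and the $L_1$-convergence of the translated difference quotient of $\phi$ deliver, so simply make the $L_2$ version explicit and the argument is complete.
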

    We also require the following theorem, which is a special case of \cite[Theorem 7.2]{LeSZ_cwikel}:
    \begin{thm}\label{ncp cwikel lp}
        Let $p \in (2,\infty)$. If $x \in L_p(\Rl^d_\theta)$ and $g \in L_p(\Rl^d)$, then
        \begin{equation*}
            \|\pi_1(x)g(\nabla)\|_{\Lc_p} \leq C(p,d,\theta) \|x\|_{L_p(\Rl^d_\theta)} \|g\|_{L_p(\Rl^d)}.
        \end{equation*}
    \end{thm}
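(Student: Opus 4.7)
My plan is a two-step argument: first establish the estimate at $p = 2$ by a direct Hilbert--Schmidt calculation, and then extend to $p \in (2,\infty)$ by Stein complex interpolation against the trivial operator-norm bound at $p = \infty$.

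For the endpoint $p = 2$, I would pass to a dense subspace of $L_2(\Rl^d_\theta)$ consisting of elements $x$ admitting an absolutely convergent expansion $x = \int_{\Rl^d}\hat{x}(s)\,U(s)\,ds$ with $\hat{x}$ Schwartz (such a subspace is dense by Lemma \ref{ncp density lemma} combined with the description of $\pi_1$ in terms of the generators $U(s)$). Plugging the defining relation $(U(s)\xi)(u) = e^{-\frac{i}{2}(s,\theta u)}\xi(u-s)$ into the product and making the change of variables $v = u - s$ shows that $\pi_1(x)g(\nabla)$ is the integral operator on $L_2(\Rl^d)$ with kernel
\begin{equation*}
K(u,v) = \hat{x}(u-v)\,e^{-\frac{i}{2}(u-v,\theta u)}\,g(v).
\end{equation*}
Its Hilbert--Schmidt norm squared is $\int_{\Rl^{2d}}|\hat{x}(u-v)|^2|g(v)|^2\,du\,dv = \|\hat{x}\|_{L_2(\Rl^d)}^2\,\|g\|_{L_2(\Rl^d)}^2$, and the relation $\tau_\theta(U(s)^*U(t)) = c\,\delta(t-s)$ (a consequence of the nondegeneracy of $\theta$ and the commutation relations \eqref{canonical commutation relations}) identifies $\|\hat{x}\|_{L_2(\Rl^d)}$ with a constant multiple of $\|x\|_{L_2(\Rl^d_\theta)}$, giving the $p = 2$ case.

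For $p \in (2,\infty)$, I would apply Stein's interpolation theorem for analytic families of operators. After reducing by polar decomposition to the case $x \geq 0$, $g \geq 0$ with $\|x\|_{L_p(\Rl^d_\theta)} = \|g\|_{L_p(\Rl^d)} = 1$, consider
\begin{equation*}
T(z) := \pi_1\bigl(x^{pz/2}\bigr)\,\bigl(g^{pz/2}\bigr)(\nabla),\qquad 0 \leq \mathrm{Re}(z) \leq 1,
\end{equation*}
defined via the Borel functional calculus in each factor. On $\mathrm{Re}(z) = 0$ both factors have operator norm at most one, so $\|T(z)\|_{\infty} \leq 1$; on $\mathrm{Re}(z) = 1$ the $p=2$ step yields $\|T(z)\|_{\Lc_2} \leq C(d,\theta)$ uniformly in the imaginary part. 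Interpolating at $z = 2/p$ then gives the desired bound on $\|T(2/p)\|_{\Lc_p} = \|\pi_1(x)g(\nabla)\|_{\Lc_p}$.

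The main obstacle is verifying the hypotheses of Stein's theorem in this hybrid classical/noncommutative context: one must check weak analyticity and admissible growth of $z \mapsto T(z)$ on the strip, uniformly in matrix elements drawn from a common dense core. I would handle this by first approximating $x$ and $g$ by bounded spectral truncations $x\mathbf{1}_{[\varepsilon,1/\varepsilon]}(x)$ and $g\mathbf{1}_{[\varepsilon,1/\varepsilon]}(g)$ (for which $z \mapsto x^{pz/2}$ is entire of finite exponential type and the needed analyticity is straightforward), obtaining the $\Lc_p$ estimate uniformly in $\varepsilon$, and then sending $\varepsilon \to 0$ by monotone convergence of the $L_p$-norms on both sides.
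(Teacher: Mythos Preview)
The paper does not prove this theorem at all: it is stated as a special case of \cite[Theorem 7.2]{LeSZ_cwikel} and simply quoted without argument. So there is no ``paper's own proof'' to compare against beyond the citation.

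Your proposed route is the standard one for Cwikel-type estimates and is essentially correct. The $p=2$ kernel computation is right, and the identification $\|\hat{x}\|_{L_2(\Rl^d)} = c\|x\|_{L_2(\Rl^d_\theta)}$ is the noncommutative Plancherel theorem for the Moyal plane (your phrasing via $\tau_\theta(U(s)^*U(t)) = c\,\delta(t-s)$ is informal but points at the right fact). The Stein interpolation step is also standard: on $\mathrm{Re}(z)=0$ both factors are partial isometries, and on $\mathrm{Re}(z)=1$ the $p=2$ estimate applies with $\|x^{p(1+it)/2}\|_{L_2(\Rl^d_\theta)} = \|x\|_{L_p}^{p/2}$. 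Your handling of analyticity via spectral truncation is the usual workaround. One small quibble: your appeal to Lemma~\ref{ncp density lemma} for the density of Schwartz-type elements in $L_2(\Rl^d_\theta)$ is not quite on target (that lemma concerns density in $C_0(\Rl^d_\theta)$), but the required density is standard and available in the same reference.

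In short: the paper outsources the proof, and what you wrote is precisely the kind of argument one would expect to find in the cited source.
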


    The space $\ell_{1,\infty}(L_\infty)(\Rl^d)$
    is defined as the set of $g \in L_\infty(\Rl^d)$ such that:
    \begin{equation*}
        \left\{\mathrm{esssup}_{t \in n+[0,1]^d} |g(t)|\right\}_{n \in \Itgr^d} \in \ell_{1,\infty}(\Itgr^d).
    \end{equation*}
    The space $\ell_{1}(L_\infty)(\Rl^d)$ is defined similarly, with $\ell_1$ in place of $\ell_{1,\infty}$.

    The following is a special case of \cite[Theorem 7.6]{LeSZ_cwikel}:
    \begin{thm}\label{ncp cwikel} For every $x \in W^{d,1}(\Rl^d_\theta)$ and $g \in \ell_{1,\infty}(L_\infty(\Rl^d))$ we have that $\pi_1(x)g(\nabla) \in \Lc_{1,\infty}(L_2(\Rl^d))$
    and $\|\pi_1(x)g(\nabla)\|_{1,\infty} \leq C_{d,\theta}\|x\|_{W^{d,1}}\|g\|_{\ell_{1,\infty}(L_\infty)}$.
    \end{thm}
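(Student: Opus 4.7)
The plan is to prove this as a weak-Schatten (Cwikel-type) estimate, using the $L_p$ version already available (Theorem \ref{ncp cwikel lp}) as a local input and then gluing the local pieces together with a Birman--Solomyak type decomposition keyed to the integer lattice. Since $W^{d,1}(\Rl^d_\theta)$ is dense in $C_0(\Rl^d_\theta)$ by Lemma \ref{ncp density lemma}, and both sides of the claimed inequality are lower semicontinuous in $x$ (with respect to $W^{d,1}$-convergence), I would reduce to $x$ in a convenient dense subspace where all manipulations with $\nabla^\alpha x$ are justified.

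First, decompose the symbol $g$ according to the tiling of $\Rl^d$ by unit cubes: set $Q_n = n + [0,1]^d$ for $n \in \Itgr^d$ and write $g = \sum_{n} g_n$ with $g_n = g\,\chi_{Q_n}$. The hypothesis $g \in \ell_{1,\infty}(L_\infty(\Rl^d))$ is exactly the statement that $\{\|g_n\|_\infty\}_{n \in \Itgr^d} \in \ell_{1,\infty}(\Itgr^d)$, and since $Q_n$ has unit volume, $\|g_n\|_{L_p(\Rl^d)} \leq \|g_n\|_\infty$ for every $p \in [1,\infty]$. By Theorem \ref{ncp cwikel lp}, for any fixed $p > 2$,
\begin{equation*}
    \|\pi_1(x)g_n(\nabla)\|_{\Lc_p} \leq C(p,d,\theta)\,\|x\|_{L_p(\Rl^d_\theta)}\,\|g_n\|_\infty.
\end{equation*}

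Next, I would upgrade this to an estimate that decays in $n$. Since $g_n$ is supported in $Q_n$, integration by parts in frequency (equivalently, repeated application of the commutator identity $\partial_k x = i[D_k,x]$) allows one to trade a factor of $(1+|n|)^{-d}$ against $d$ derivatives of $x$: morally
\begin{equation*}
    \|\pi_1(x)g_n(\nabla)\|_{\Lc_p} \;\lesssim\; (1+|n|)^{-d}\,\sum_{|\alpha|\leq d}\|\nabla^\alpha x\|_{L_p(\Rl^d_\theta)}\,\|g_n\|_\infty,
\end{equation*}
again via Theorem \ref{ncp cwikel lp} applied to the derivatives of $x$ against the rescaled symbol. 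This gives a bound on each block in a strong Schatten ideal in terms of $\|x\|_{W^{d,p}}$, and by choosing $p$ close to $1$ and using the boundedness of the inclusion $W^{d,1} \hookrightarrow W^{d,p}$ on suitable subalgebras (or, cleanly, by interpolating between an $L_\infty$ bound and the above $\Lc_p$ bound), one converts the Sobolev norm to $W^{d,1}$.

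Finally, I would assemble the blocks. The operators $g_n(\nabla)$ have nearly disjoint spectral supports: $g_n(\nabla)g_m(\nabla) = 0$ unless $m = n$, so $g(\nabla) = \sum_n g_n(\nabla)$ is an orthogonal sum of spectral projections weighted by $\|g_n\|_\infty$. The singular values of $\pi_1(x)g(\nabla)$ can therefore be estimated by combining the block estimates with the weak-$\ell_1$ control on $\{\|g_n\|_\infty\}$, yielding
\begin{equation*}
    \|\pi_1(x)g(\nabla)\|_{\Lc_{1,\infty}} \leq C_{d,\theta}\,\|x\|_{W^{d,1}}\,\|g\|_{\ell_{1,\infty}(L_\infty)}.
\end{equation*}
The main obstacle is this last step. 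Since $\Lc_{1,\infty}$ is not $\sigma$-order continuous and the triangle inequality is only a quasi-norm with logarithmic losses, one cannot simply sum Schatten norms. The correct mechanism is a quasi-orthogonality argument for rearrangements of singular values of blocks that are (almost) orthogonal on one side, combined with the weak-$\ell_1$ convolution inequality for $\{\|g_n\|_\infty\}$ against a geometric sequence; this is precisely the technical heart of \cite[Theorem 7.6]{LeSZ_cwikel}, from which the stated theorem follows as the announced special case.
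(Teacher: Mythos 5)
The paper does not prove this statement: it is presented verbatim as ``a special case of \cite[Theorem 7.6]{LeSZ_cwikel},'' so the ``paper's own proof'' is simply a citation. Your proposal ends in the same place (deferring the technical heart to \cite[Theorem 7.6]{LeSZ_cwikel}), so the bottom line agrees, but the sketch you interpose before that citation contains a step that is actually false.

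The problematic step is the claimed intermediate bound
\begin{equation*}
    \|\pi_1(x)g_n(\nabla)\|_{\Lc_p}\;\lesssim\;(1+|n|)^{-d}\sum_{|\alpha|\leq d}\|\nabla^\alpha x\|_{L_p(\Rl^d_\theta)}\,\|g_n\|_\infty,
\end{equation*}
which you propose to obtain by ``integration by parts in frequency'' using $\partial_k x = i[D_k,x]$. Sobolev regularity of $x$ cannot produce decay in $n$ for a single frequency block. To see this, formally take $\theta=0$ and $x$ smooth and compactly supported; then $\|M_x\chi_{Q_n}(\nabla)\|_{\Lc_2} = \|x\|_{L_2}\|\chi_{Q_n}\|_{L_2} = \|x\|_{L_2}$ for every $n$, with no decay whatsoever, no matter how many derivatives of $x$ you control. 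The commutator manipulation $\pi_1(x)g_n(\nabla) = \pi_1(x)(1-\Delta)^{-d/2}\cdot(1-\Delta)^{d/2}g_n(\nabla)$ would introduce a factor $\sim(1+|n|)^{d}$ on the second operator, i.e.\ growth rather than decay; there is no free gain. In the actual mechanism of \cite[Theorem 7.6]{LeSZ_cwikel}, all of the decay in $n$ comes from the hypothesis $\{\|g_n\|_\infty\}\in\ell_{1,\infty}(\Itgr^d)$, while the Sobolev condition $x\in W^{d,1}(\Rl^d_\theta)$ serves a different purpose: it is the noncommutative surrogate for the mixed-norm condition $f\in\ell_1(L_2)(\Rl^d)$ that appears in the classical $\Lc_{1,\infty}$ Cwikel estimate, and is used to control $x$ uniformly across all frequency blocks rather than to gain decay per block. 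The block-by-block estimates together with the Birman--Solomyak quasi-orthogonality device for one-sided orthogonal families are then what deliver the weak-$\Lc_1$ conclusion, as you indicate in your final paragraph; but the decay-trading step as you have written it would need to be discarded and replaced by this correct division of labour between $x$ and $g$.
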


    Applying Theorem \ref{ncp cwikel} to the function $g(t) = (1+|t|^2)^{-d/2}$, we obtain a corollary,
    \begin{cor} \label{ncp specific cwikel}
        If $x \in W^{d,1}(\Rl^d_\theta)$, then
        \begin{equation*}
            \|\pi_1(x)(1-\Delta)^{-d/2}\|_{1,\infty} \leq C_d \|x\|_{W^{d,1}}.
        \end{equation*}
    \end{cor}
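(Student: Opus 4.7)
The plan is to apply Theorem \ref{ncp cwikel} directly to the specific function $g(t) = (1+|t|^2)^{-d/2}$, as the hint preceding the corollary suggests. Under the paper's conventions, $g(\nabla)$ is by definition the multiplication operator on $L_2(\Rl^d)$ by $(1+|t|^2)^{-d/2}$, which matches the operator $(1-\Delta)^{-d/2}$ appearing in the statement. Thus if I can show that this $g$ lies in $\ell_{1,\infty}(L_\infty(\Rl^d))$ with quasinorm bounded by a dimensional constant $C_d$, Theorem \ref{ncp cwikel} will immediately yield the claim with constant $C_{d,\theta} \cdot C_d$.

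The only substantive step is to verify $g \in \ell_{1,\infty}(L_\infty(\Rl^d))$ and estimate its quasinorm. To this end, I would first note the elementary bound
\begin{equation*}
    \mathrm{esssup}_{t \in n + [0,1]^d} (1+|t|^2)^{-d/2} \leq C_d (1+|n|)^{-d}, \quad n \in \Itgr^d,
\end{equation*}
obtained by observing that for $|n| \geq 2\sqrt{d}$ every point of $n+[0,1]^d$ has $|t| \geq |n|/2$. The result then reduces to showing that $\{(1+|n|)^{-d}\}_{n \in \Itgr^d}$ belongs to $\ell_{1,\infty}(\Itgr^d)$.

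For the distribution function, I count: for $\lambda \in (0,1]$,
\begin{equation*}
    \#\bigl\{n \in \Itgr^d : (1+|n|)^{-d} \geq \lambda\bigr\} = \#\bigl\{n \in \Itgr^d : |n| \leq \lambda^{-1/d} - 1\bigr\} \leq C_d \lambda^{-1},
\end{equation*}
by a standard comparison of the number of lattice points in a ball of radius $R$ with its Euclidean volume $\sim R^d$. This is precisely the defining condition for membership in $\ell_{1,\infty}(\Itgr^d)$, with quasinorm bounded by a constant depending only on $d$. Combining this with the application of Theorem \ref{ncp cwikel} completes the proof; no step presents a serious obstacle, as the entire content lies in the routine lattice-point count.
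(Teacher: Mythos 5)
Your proof is correct and follows precisely the route the paper intends: the paper presents the corollary with only the one-line remark ``Applying Theorem~\ref{ncp cwikel} to the function $g(t) = (1+|t|^2)^{-d/2}$,'' leaving the verification that $g \in \ell_{1,\infty}(L_\infty)(\Rl^d)$ implicit, which your lattice-point estimate correctly supplies. You are also right to observe that Theorem~\ref{ncp cwikel} actually yields the constant $C_{d,\theta}$ rather than the $C_d$ stated in the corollary; this is a minor imprecision in the paper's statement, not a gap in your argument.
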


    The following is an $\Lc_1$ Cwikel estimate, proved in \cite[Theorem 7.7]{LeSZ_cwikel}
    \begin{lem}\label{ncp trace class cwikel}
        If $g \in \ell^{1}({L_\infty})(\Rl^d)$ and $x \in W^{d,1}(\Rl^d_\theta)$, then $\pi_1(x)g(\nabla) \in \Lc_1$.
    \end{lem}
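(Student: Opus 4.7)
The plan is to decompose $g$ into unit-cube pieces and sum $\Lc_1$ bounds on each, leveraging the strict $\ell^1$ summability in the hypothesis $g\in\ell^1(L_\infty)(\Rl^d)$. Write $g=\sum_{n\in\Itgr^d}g_n$ with $g_n:=g\chi_{n+[0,1)^d}$ and $a_n:=\|g_n\|_\infty$, so that $\sum_n a_n<\infty$. It suffices to establish a uniform cube estimate
\[
\|\pi_1(x)g_n(\nabla)\|_{\Lc_1}\le C_{d,\theta}\|x\|_{W^{d,1}}a_n
\]
with constant independent of $n$; absolute convergence in the trace norm then yields the conclusion.

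To remove dependence on $n$ I would reduce to a fixed cube via translation. The shift $T_s\xi(t):=\xi(t-s)$ is a unitary on $L_2(\Rl^d)$ intertwining $g_n(\nabla)=T_nh_n(\nabla)T_{-n}$ with $h_n(t):=g(t+n)\chi_{[0,1)^d}(t)$, so $\mathrm{supp}(h_n)\subset[0,1]^d$ and $\|h_n\|_\infty=a_n$. A direct computation with $U(t)\xi(u)=e^{-i(t,\theta u)/2}\xi(u-t)$ shows that conjugation by $T_n$ sends $U(t)\mapsto e^{-i(t,\theta n)/2}U(t)$, hence defines a $*$-automorphism of $L_\infty(\Rl^d_\theta)$ that commutes with each derivation $\partial_k$ and therefore preserves the Sobolev norm $\|\cdot\|_{W^{d,1}}$. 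The cube estimate thus reduces to the single fixed-cube statement
\[
\|\pi_1(y)h(\nabla)\|_{\Lc_1}\le C_{d,\theta}\|y\|_{W^{d,1}}\|h\|_\infty\qquad\text{for }\mathrm{supp}(h)\subset[0,1]^d. \qquad(\star)
\]

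The principal technical obstacle is $(\star)$. The naive factorization $\pi_1(y)h(\nabla)=\pi_1(y)(1-\Delta)^{-d/2}\cdot(1-\Delta)^{d/2}h(\nabla)$ combined with Corollary \ref{ncp specific cwikel} delivers only an $\Lc_{1,\infty}$ bound, and $\Lc_{1,\infty}$ quasi-norms do not sum to an $\Lc_1$ bound, so this route falls short by exactly one notch. The upgrade must exploit the compact support of $h$: since $\pi_1(y)h(\nabla)=\pi_1(y)\phi(\nabla)h(\nabla)$ for any smooth compactly supported $\phi$ equal to $1$ on $[0,1]^d$ (using $\phi h=h$ pointwise, so $\phi(\nabla)h(\nabla)=h(\nabla)$), it suffices to show $\pi_1(y)\phi(\nabla)\in\Lc_1$ with $\Lc_1$-norm controlled by $\|y\|_{W^{d,1}}$. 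I would achieve this via a H\"older factorization $\Lc_p\cdot\Lc_{p'}\subset\Lc_1$ for some $p\in(2,\infty)$, invoking the $\Lc_p$-Cwikel estimate of Theorem \ref{ncp cwikel lp} (and the fact that the smooth compactly supported $\phi$ lies in every $L_p(\Rl^d)$) together with an $L_p$-type control on $y$ extracted from the $W^{d,1}$ hypothesis via a noncommutative Sobolev/Gagliardo--Nirenberg embedding. Summing the resulting uniform estimate $(\star)$ with weights $a_n$ over $n\in\Itgr^d$ then completes the proof.
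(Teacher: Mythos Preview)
The paper itself does not prove this lemma---it is simply quoted from \cite[Theorem 7.7]{LeSZ_cwikel}---so there is no in-paper argument to compare against. Your cube decomposition and translation reduction to a fixed cube are sound (the conjugation $T_nU(t)T_{-n}=e^{\pm i(t,\theta n)/2}U(t)$ indeed defines a $W^{d,1}$-isometric automorphism of $L_\infty(\Rl^d_\theta)$, so the uniform reduction to $(\star)$ is correct).

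The genuine gap is in your justification of $(\star)$. A H\"older factorization $\Lc_p\cdot\Lc_{p'}\subset\Lc_1$ with $p\in(2,\infty)$ forces $p'<2$, but Theorem~\ref{ncp cwikel lp} is stated only for $p>2$ and therefore cannot supply the $\Lc_{p'}$ factor. Nor can any piece of $\phi(\nabla)$ serve in that role: as a nonzero multiplication operator on $L_2(\Rl^d)$ it is not even compact, let alone Schatten-class. Splitting on the $y$-side fails for the same reason, since $\pi_1(z)$ is never compact in this representation for $z\neq 0$. So there is no factorization of $\pi_1(y)\phi(\nabla)$ into two Schatten-class operators available from the tools you invoke.

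More fundamentally, your sketch uses the hypothesis $y\in W^{d,1}$ only through a Sobolev embedding into $L_p$, and that alone yields at best $\pi_1(y)\phi(\nabla)\in\bigcap_{p>2}\Lc_p$ via Theorem~\ref{ncp cwikel lp}, which is strictly weaker than $\Lc_1$. The upgrade from $\Lc_{1,\infty}$ (or $\Lc_2$) to $\Lc_1$ on a fixed cube genuinely requires the derivative structure of $W^{d,1}$, not merely an $L_p$ bound on $y$; your outline does not indicate how those derivatives enter, and without them the conclusion of $(\star)$ does not follow.
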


\subsection{Geometry of ${\rm SU}(2)$}
    {\hightlight  Finally we discuss the relevant geometry of the Lie group $\SU(2)$. For a more thorough discussion, see \cite{ruzh-turu}.}
    We equip the group $\SU(2)$ with its unique right-invariant Haar measure $dg$.
    Let $\lambda_l,\lambda_r:{\rm SU}(2)\to \mathcal{L}(L_2({\rm SU}(2)))$ be the left and right regular representations given by the formulae,
    $$(\lambda_l(g_1)f)(g_2)=f(g_1^{-1}g_2),\quad f\in L_2({\rm SU}(2)),\quad g_1,g_2\in{\rm SU}(2),$$
    $$(\lambda_r(g_1)f)(g_2)=f(g_2g_1),\quad f\in L_2({\rm SU}(2)),\quad g_1,g_2\in{\rm SU}(2).$$
    {\hightlight Here, right invariance means that for all $h \in \SU(2)$,
    \begin{equation*}
        \int_{\SU(2)} (\lambda_r(h)f)(g)\,dg = \int_{\SU(2)} f(g)\,dg
    \end{equation*}
    for all $f \in L_1(\SU(2))$.}

    Recall that the Lie algebra $\mathfrak{su}(2)$ of the Lie group ${\rm SU}(2)$ is given by the formula 
    $$\mathfrak{su}(2)=\big\{x\in M_2(\mathbb{C}):\ x^*=-x,\ {\rm Tr}(x)=0\big\}.$$
    It is spanned by $i\sigma_k,$ $1\leq k\leq 3,$ where $\sigma_k,$ $1\leq k\leq 3,$ are the $2\times 2$ Pauli matrices:
    \begin{equation}\label{pauli matrices}
        \sigma_1 = \begin{pmatrix} 1 & 0\\0 & -1\end{pmatrix},\quad \sigma_2 = \begin{pmatrix} 0 & -i \\i & 0\end{pmatrix},\quad \sigma_3 = \begin{pmatrix} 0 & 1 \\ 1 & 0\end{pmatrix}.
    \end{equation}

    It is known that the Lie group ${\rm SU}(2)$ is generated by $1-$parameter subgroups $t\to\exp(it\sigma_k),$ $1\leq k\leq 3.$ Let $D_k,$ $1\leq k\leq 3,$ be the generator of the strongly continuous unitary group $t\to\lambda_l(\exp(it\sigma_k)).$

    We have
    $$[D_1,D_2]=2iD_3,\quad [D_2,D_3]=2iD_1,\quad [D_3,D_1]=2iD_2.$$
    Note that the Laplacian can be expressed by the formula
    $$-\Delta=D_1^2+D_2^2+D_3^2.$$
    It is immediate that
    $$[-\Delta,D_k]=0,\quad 1\leq k\leq 3.$$

    Hence, $\Delta$ commutes with each $\lambda_l(g)$, since it commutes with the generators
    of $\SU(2)$.
    It is known that $(1-\Delta)^{-3/2} \in \Lc_{1,\infty}$. Indeed, it is proved
    in \cite[Theorem 11.9.3]{ruzh-turu} that $\Delta$ has eigenvalues $\{-l(l+1)\}_{l \in \frac{1}{2}\Ntrl}$,
    where the $l$th eigenvalue has multiplicity $O(l^2)$.

    We consider an algebra (which we will denote $\mathcal{A}_2$) given as the $C^*-$subalgebra in $\mathcal{L}(L_2({\rm SU}(2)))$ generated by the operators
    $$b_k : =\frac{D_k}{(-\Delta)^{\frac12}},\quad 1\leq k\leq 3.$$
    Formally speaking the operators $D_k,$ $1\leq k\leq 3,$ and $-\Delta$ vanish on the subspace of constants. We define $b_k(1) = \frac{1}{\sqrt{3}}$
    so that $b_1^2+b_2^2+b_3^2 = 1$.

    Since $\Delta$ commutes with $D_k, k = 1,2,3$, it follows that:
    \begin{equation*}
        [b_j,b_k] = 2i(-\Delta)^{-1}D_l
    \end{equation*}
    for some $l$. Hence, $[b_j,b_k]$ is compact.

\section{Main construction}
    {\hightlight  In this section we describe the abstract algebraic framework for our principal symbol mapping. 
    The proofs of the fundamental results Lemma \ref{initial lemma} and Theorem \ref{symbol def} are straightforward, but instructive.
    For example, the proof of the following lemma follows almost immediately from Theorems \ref{sakai1}, \ref{sakai2} and \ref{sakai3}.}
    \begin{lem}\label{initial lemma} 
        Let $\mathcal{A}_1,$ $\mathcal{A}_2$ and $\mathcal{B}$ be $C^*-$algebras and let $\rho_1:\mathcal{A}_1\to\mathcal{B}$ and $\rho_2:\mathcal{A}_2\to\mathcal{B}$ be $C^*-$homomorphisms. Suppose that
        \begin{enumerate}
            \item\label{ina} $\mathcal{B}$ is generated by $\rho_1(\mathcal{A}_1)$ and $\rho_2(\mathcal{A}_2).$
            \item\label{inb} $\rho_1(x)$ commutes with $\rho_2(y)$ for all $x\in\mathcal{A}_1,$ $y\in\mathcal{A}_2.$
            \item\label{inc} $\mathcal{A}_1,\mathcal{A}_2$ are unital and $\mathcal{A}_2$ is abelian.
            \item\label{ind} The mapping $\theta:\mathcal{A}_1\otimes\mathcal{A}_2\to\mathcal{B}$ defined by the formula
            $$\theta(a_1\otimes a_2)=\rho_1(a_1)\rho_2(a_2),\quad a_1\in\mathcal{A}_1,\ a_2\in\mathcal{A}_2,$$
            is injective.
        \end{enumerate}
        Under these conditions, $\theta$ extends to a $C^*-$algebra isomorphism
        $$\theta:\mathcal{A}_1\otimes_{{\rm min}}\mathcal{A}_2\to\mathcal{B}.$$
    \end{lem}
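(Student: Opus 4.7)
The plan is to check that $\theta$ is a $*$-homomorphism on the algebraic tensor product, then use the uniqueness theorem for pre-$C^*$-norms to conclude that the pullback norm agrees with $\|\cdot\|_{\min}$, and finally to extend by continuity and identify the image with $\mathcal{B}$. To see that the prescription $\theta(a_1\otimes a_2)=\rho_1(a_1)\rho_2(a_2)$ extends linearly to $\A_1\otimes\A_2$, one invokes the universal property of the tensor product; multiplicativity on simple tensors reduces, after using (\ref{inb}) to commute $\rho_2(a_2)$ past $\rho_1(b_1)$, to the fact that $\rho_1$ and $\rho_2$ are themselves homomorphisms. Preservation of the involution is analogous.

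Next I would define a seminorm on $\A_1\otimes\A_2$ by $\nu(T):=\|\theta(T)\|_{\mathcal{B}}$. Because $\theta$ is a $*$-homomorphism into a $C^*$-algebra, $\nu$ automatically satisfies $\nu(T^*T)=\nu(T)^2$ and is therefore a $C^*$-seminorm; condition (\ref{ind}) rules out degeneracy, so $\nu$ is in fact a pre-$C^*$-norm. Since $\A_2$ is abelian and both algebras are unital (condition (\ref{inc})), Theorem \ref{sakai2} forces the pre-$C^*$-norm on $\A_1\otimes\A_2$ to be unique, namely $\|\cdot\|_{\min}$. Hence $\nu=\|\cdot\|_{\min}$, which is to say $\theta$ is isometric with respect to the minimal norm.

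Finally, by density of $\A_1\otimes\A_2$ in $\A_1\otimes_{\min}\A_2$, the isometric $*$-homomorphism $\theta$ extends uniquely to an isometric $*$-homomorphism $\theta:\A_1\otimes_{\min}\A_2\to\mathcal{B}$. Its image is closed (as the isometric image of a complete space) and contains both $\rho_1(\A_1)=\theta(\A_1\otimes 1_{\A_2})$ and $\rho_2(\A_2)=\theta(1_{\A_1}\otimes\A_2)$; by the generation hypothesis (\ref{ina}) the image must be all of $\mathcal{B}$, so $\theta$ is a $C^*$-algebra isomorphism.

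The substantive step is the uniqueness-of-pre-$C^*$-norm step, which is exactly where the abelianness of $\A_2$ enters through Theorem \ref{sakai2}: without it, the pullback norm $\nu$ could a priori be strictly larger than $\|\cdot\|_{\min}$ and the extension would land in some other completion. Everything else is a formal consequence of the $C^*$-identity together with the injectivity of $\theta$ and the density of its range.
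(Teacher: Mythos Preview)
Your proof is correct and follows essentially the same approach as the paper: define the pullback norm $\nu(T)=\|\theta(T)\|_{\mathcal{B}}$, invoke Theorem~\ref{sakai2} via condition~\eqref{inc} to identify it with $\|\cdot\|_{\min}$, and then extend by density and use condition~\eqref{ina} for surjectivity. The only differences are cosmetic---you spell out a few points (closedness of the image, the identification $\rho_j(\A_j)=\theta(\A_j\otimes 1)$) that the paper leaves implicit.
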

    \begin{proof} It follows from condition \eqref{inb} that $\theta$ is a $*-$homomorphism. By condition \eqref{ind}, $\theta$ is an injection on the algebraic
        tensor product. This allows us to define a pre-$C^*-$norm on $\mathcal{A}_1\otimes\mathcal{A}_2$ by setting
        $$\|T\|=\|\theta(T)\|_{\mathcal{B}},\quad T\in\mathcal{A}_1\otimes\mathcal{A}_2.$$
        By condition \eqref{inc} and Theorem \ref{sakai2}, the latter norm coincides with the minimal pre-$C^*-$norm on $\mathcal{A}_1\otimes\mathcal{A}_2.$ Thus, $\theta:\mathcal{A}_1\otimes\mathcal{A}_2\to\mathcal{B}$ is an isometric embedding of the algebra $\mathcal{A}_1\otimes\mathcal{A}_2$ equipped with the minimal $C^*-$norm into $\mathcal{B}.$ Since $\mathcal{A}_1\otimes\mathcal{A}_2$ is dense in $\mathcal{A}_1\otimes_{{\rm min}}\mathcal{A}_2,$ the assertion follows from the condition \eqref{ina}.
    \end{proof}

    \begin{rem}
        Lemma \ref{initial lemma} uses the fact that there is a unique pre-$C^*$-norm on $\A_1\otimes \A_2$. It is enough to assume that one of the factors is nuclear, instead of abelian. For the remainder of this text
        we restrict to the case where one factor is abelian.
    \end{rem}

    Let $\mathcal{Q}(H)$ be the Calkin algebra and let $q:\mathcal{L}(H)\to\mathcal{Q}(H)$ be the quotient mapping.
    \begin{thm}\label{symbol def} Let $\mathcal{A}_1$ and $\mathcal{A}_2$ be $C^*-$algebras and let $\pi_1:\mathcal{A}_1\to\mathcal{L}(H)$ and $\pi_2:\mathcal{A}_2\to\mathcal{L}(H)$ be representations. Let $\Pi(\A_1,\A_2)$ be the $C^*-$algebra generated by $\pi_1(\mathcal{A}_1)$ and $\pi_2(\mathcal{A}_2).$ Suppose that
    \begin{enumerate}
    \item\label{symba} $\mathcal{A}_1,\mathcal{A}_2$ are unital and $\mathcal{A}_2$ is abelian.
    \item\label{symbb} The representations $\pi_1$ and $\pi_2$ ``commute modulo compact operators" i.e., for all $a_1 \in \A_1$ and $a_2 \in \A_2$ the commutator $[\pi_1(a_1),\pi_2(a_2)]$ is compact.
    \item\label{symbc} If $x_k\in\mathcal{A}_1,$ $y_k\in\mathcal{A}_2,$ $1\leq k\leq n,$ then
    $$\sum_{k=1}^n\pi_1(x_k)\pi_2(y_k)\in\mathcal{K}(H)\Longrightarrow\sum_{k=1}^nx_k\otimes y_k=0.$$
    \end{enumerate}
    There exists a unique continuous $*-$homomorphism $\sym:\Pi(\A_1,\A_2)\to\mathcal{A}_1\otimes_{{\rm min}}\mathcal{A}_2$ such that
    $$\sym(\pi_1(x))=x\otimes 1,\quad \sym(\pi_2(y))=1\otimes y,\quad x\in\mathcal{A}_1,y\in\mathcal{A}_2.$$
    \end{thm}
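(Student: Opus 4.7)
The plan is to factor the desired map through the Calkin algebra: by hypothesis \eqref{symbb}, the images $q(\pi_1(\A_1))$ and $q(\pi_2(\A_2))$ in $\mathcal{Q}(H)$ commute elementwise, which will let us apply Lemma \ref{initial lemma}. Concretely, I will set $\rho_i := q\circ \pi_i$ for $i=1,2$ and take $\mathcal{B} := q(\Pi(\A_1,\A_2))$, a $C^*$-subalgebra of $\mathcal{Q}(H)$. The content of the proof is then to verify the four hypotheses of Lemma \ref{initial lemma} for this data, produce an isomorphism $\theta:\A_1\otimes_{\min}\A_2\to\mathcal{B}$, and define $\sym:=\theta^{-1}\circ q|_{\Pi(\A_1,\A_2)}$.

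For the verification: condition \eqref{ina} of Lemma \ref{initial lemma} is immediate since $q$ is a $*$-homomorphism, so $\mathcal{B}$ is generated by $q(\pi_1(\A_1))\cup q(\pi_2(\A_2))=\rho_1(\A_1)\cup\rho_2(\A_2)$. Condition \eqref{inb} is exactly hypothesis \eqref{symbb}, since $q$ annihilates $\Kc(H)$. Condition \eqref{inc} is hypothesis \eqref{symba}. Condition \eqref{ind}, the injectivity of $\theta_0:\A_1\otimes\A_2\to\mathcal{B}$ given on simple tensors by $a_1\otimes a_2\mapsto \rho_1(a_1)\rho_2(a_2)$, is exactly a reformulation of hypothesis \eqref{symbc}: if $\sum_k x_k\otimes y_k$ lies in the kernel then $\sum_k \pi_1(x_k)\pi_2(y_k)\in\Kc(H)$, and \eqref{symbc} forces $\sum_k x_k\otimes y_k=0$.

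Applying Lemma \ref{initial lemma} produces a $C^*$-isomorphism
\begin{equation*}
    \theta:\A_1\otimes_{\min}\A_2\longrightarrow\mathcal{B}=q(\Pi(\A_1,\A_2)).
\end{equation*}
I then define $\sym:=\theta^{-1}\circ q|_{\Pi(\A_1,\A_2)}$, which is a norm-continuous $*$-homomorphism from $\Pi(\A_1,\A_2)$ to $\A_1\otimes_{\min}\A_2$. Since $\theta(x\otimes 1)=\rho_1(x)\rho_2(1)=q(\pi_1(x))$ and $\theta(1\otimes y)=q(\pi_2(y))$, we get $\sym(\pi_1(x))=x\otimes 1$ and $\sym(\pi_2(y))=1\otimes y$ as required.

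Uniqueness is automatic: any continuous $*$-homomorphism $\Pi(\A_1,\A_2)\to\A_1\otimes_{\min}\A_2$ with the prescribed values on $\pi_1(\A_1)$ and $\pi_2(\A_2)$ is determined on the $*$-algebra generated by these sets, which is norm-dense in $\Pi(\A_1,\A_2)$. There is no substantial obstacle here — the whole argument reduces to unpacking Lemma \ref{initial lemma}; the only point that requires care is checking that hypothesis \eqref{symbc} is precisely what is needed to obtain injectivity of $\theta_0$ on the algebraic tensor product, since without it the quotient map $q$ could collapse tensors that were not already zero.
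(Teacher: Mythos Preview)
Your proof is correct and follows essentially the same approach as the paper: both factor through the Calkin algebra by setting $\mathcal{B}=q(\Pi(\A_1,\A_2))$ and $\rho_j=q\circ\pi_j$, verify the four hypotheses of Lemma~\ref{initial lemma} from conditions \eqref{symba}--\eqref{symbc}, and then define $\sym=\theta^{-1}\circ q$. The verification of each condition and the uniqueness argument match the paper's proof almost line for line.
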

    \begin{proof} 
        This is a special case of Lemma \ref{initial lemma} with $\mathcal{B} = q(\Pi(\A_1,\A_2))$, and $\rho_j = q\circ \pi_j$, $j =1,2$. We verify
        each of the required conditions. Condition \ref{initial lemma}\eqref{ina} is satisfied since by definition $\mathcal{B} = q(\Pi(\A_1,\A_2))$ is generated by $\rho_1(\A_1)$ and $\rho_2(\A_2)$. Condition \ref{initial lemma}\eqref{inb} follows from \eqref{symbb}.    
        Condition \ref{initial lemma}\eqref{inc} is automatic, due to \eqref{symba}.
        
        Finally, condition \ref{initial lemma}\eqref{ind} is a consequence of \eqref{symbc}.
        
        Thus, Lemma \ref{initial lemma} states that
        \begin{equation*}
            \theta := \rho_1\otimes\rho_2
        \end{equation*}
        defines an isometric $*$-isomorphism $\theta:\A_1\otimes_{\min}\A_2 \to \mathcal{B}$.
        
        Define
        \begin{equation*}
            \sym := \theta^{-1}\circ q.
        \end{equation*}
        
        By construction $\sym:\Pi(\A_1,\A_2)\to \A_1\otimes_{\min} \A_2$ is a continuous $*$-algebra homomorphism. Let $x \in \A_1$. Then $\sym(\pi_1(x)) = \theta^{-1}(q(\pi_1(x)))$,
        and since $\theta(x\otimes 1) = \rho_1(x) = q(\pi_1(x))$, we get that $\sym(\pi_1(x)) = x\otimes 1$. Similarly, if $y \in \A_2$
        then $\sym(\pi_2(y)) = 1\otimes y$.
        
        As $\Pi(\A_1,\A_2)$ is generated by $\pi_1(\A_1)$ and $\pi_2(\A_2)$, and $\sym$ is continuous, it follows
        that $\sym$ is uniquely determined by its restriction to $\pi_1(\A_1)$ and $\pi_2(\A_2)$.    
    \end{proof}

    \begin{rem} 
        In Theorem \ref{symbol def}, it suffices to take $\pi_2(\mathcal{A}_2)$ abelian modulo compact operators. The target space of $\sym$ then becomes $\mathcal{A}_1\otimes_{{\rm min}}(q\circ\pi_2)(\mathcal{A}_2).$
    \end{rem}
    
    It is tempting to construct a symbol mapping in the following commutative cases:
    \begin{enumerate}
        \item $\mathcal{A}_1=C(\mathbb{T}^d),$ $\mathcal{A}_2=\ell_{\infty}(\mathbb{Z}^d),$ {\hightlight represented on $L_2(\Circ^d)$ by} $\pi_1(f)=M_f,$ $\pi_2(g)=g(\nabla).$
        \item $\mathcal{A}_1=C_b(\mathbb{R}^d),$ $\mathcal{A}_2=C_b(\mathbb{R}^d),$ {\hightlight represented on $L_2(\Rl^d)$ by} $\pi_1(f)=M_f,$ $\pi_2(g)=g(\nabla).$
    \end{enumerate}
    {\hightlight  In both cases, $M_f$ denotes pointwise multiplication by $f$ and $g(\nabla)$ denotes Fourier multiplication by $g$.}

    However, a simple lemma below shows this is impossible.
    \begin{lem}\label{nonhomogeneous commute lemma} 
        The representations $\pi_1$ and $\pi_2$ as above do not commute modulo compact operators.
    \end{lem}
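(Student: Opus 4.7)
My plan is to verify this by explicit construction: in each of the two listed settings I exhibit a concrete pair $(f,g)\in\A_1\times\A_2$ whose commutator $[\pi_1(f),\pi_2(g)]$ fails to be compact. In both cases the structural reason is the same: $M_f$ for $f$ a ``pure frequency'' conjugates $g(\nabla)$ to a translate of itself, so the commutator is governed by a shifted difference $g(\cdot+\xi)-g(\cdot)$, and we simply choose $g$ so that this difference does not vanish at infinity.

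For the torus case, I would work in the orthonormal Fourier basis $e_n(x)=(2\pi)^{-d/2}e^{in\cdot x}$, $n\in\Itgr^d$, on which $\pi_2(g)$ acts diagonally by $g(n)$ and on which $\pi_1(f)$ with $f(x)=e^{ix_1}$ acts as the shift $e_n\mapsto e_{n+e_1}$. A direct computation gives
\[
[\pi_1(f),\pi_2(g)]\,e_n=\bigl(g(n)-g(n+e_1)\bigr)e_{n+e_1}.
\]
Taking $g(n)=(-1)^{n_1}\in\ell_\infty(\Itgr^d)$, each coefficient has modulus $2$, so $[\pi_1(f),\pi_2(g)]$ sends the weakly null orthonormal sequence $\{e_{(k,0,\ldots,0)}\}_{k\in\Ntrl}$ to vectors of norm $2$, which is incompatible with compactness.

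For the Euclidean case, I would exploit the intertwining identity
\[
M_{f_\xi}\,g(\nabla)\,M_{f_\xi}^{*}=g(\nabla-\xi),\qquad f_\xi(x):=e^{i\xi\cdot x},
\]
which is immediate from the fact that conjugation by $M_{f_\xi}$ corresponds, on the Fourier side, to translation by $\xi$. Since $M_{f_\xi}$ is unitary, this rewrites the commutator as
\[
[\pi_1(f_\xi),\pi_2(g)]=\bigl(g(\nabla-\xi)-g(\nabla)\bigr)M_{f_\xi},
\]
so its compactness is equivalent to compactness of the Fourier multiplier with symbol $g(\cdot-\xi)-g(\cdot)$. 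Any non-zero Fourier multiplier on $L_2(\Rl^d)$ is unitarily equivalent via Fourier transform to a non-zero $L_\infty$ multiplication operator, which cannot be compact because Lebesgue measure is non-atomic (any set of positive measure where the symbol is bounded below carries an orthonormal sequence of indicator-type functions that are weakly null but do not vanish in norm after multiplication). Choosing $g(\eta)=\sin(\eta_1)\in C_b(\Rl^d)$ and $\xi=\pi e_1$ yields the non-zero symbol $-2\sin(\eta_1)$, completing the argument.

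The proof is short and elementary, so there is no genuine technical obstacle; the only conceptual point is that without imposing homogeneity or decay at infinity on the symbol $g$, the differences $g(\cdot)-g(\cdot+\xi)$ need not tend to zero, and this is exactly what obstructs compactness. This is precisely why the setting of \cite{Dao1} used $L_\infty(\Sb^{d-1})$ acting through homogeneous Fourier multipliers, rather than $L_\infty(\Rl^d)$, for the role of $\A_2$.
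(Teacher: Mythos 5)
Your proof is correct and uses essentially the same mechanism as the paper: conjugation by a pure exponential corresponds to translation, so the commutator factors as a unitary times a multiplication operator whose symbol is a shifted difference that fails to vanish. The only (cosmetic) difference is that you place the exponential in $\A_1$ and vary $g$, whereas the paper places it in $\A_2$ (taking $g(t)=e^{i\langle t,\alpha\rangle}$) and varies $f$ --- these two computations are exactly Fourier dual to each other, and both reduce to non-compactness of a bounded multiplication operator with non-vanishing symbol.
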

    \begin{proof} 
        We consider the second case (the first one follows {\it mutatis mutandi}). Take $\alpha\in\mathbb{R}^d$ and set $g(t)=e^{i\langle t,\alpha\rangle},$ $t\in\mathbb{R}^d.$ We have
        $$([M_f,e^{i\langle \nabla,\alpha\rangle}]x)(t)=(f(t)-f(t+\alpha))x(t+\alpha),\quad t\in\mathbb{R}.$$
        Setting $h(t)=f(t)-f(t+\alpha),$ $t\in\mathbb{R},$ we obtain that
        $$[\pi_1(f),\pi_2(g)]=\pi_1(h)g(\nabla).$$
        Since $g(\nabla)$ is a unitary operator, it follows that compactness of $[\pi_1(f),\pi_2(g)]$ implies that of $\pi_1(h).$ The latter operator is compact if and only if $h=0.$ Hence, the commutator $[\pi_1(f),\pi_2(g)]$ fails to be compact (unless $f$ is $\alpha-$periodic).
    \end{proof}

    What is possible is to construct the symbol mapping for the following algebras.

    \begin{enumerate}
        \item $\mathcal{A}_1=C(\mathbb{T}^d),$ $\mathcal{A}_2=C(\Sb^{d-1}),$ $\pi_1(f)=M_f,$ $\pi_2(g)=g(\frac{\nabla}{(-\Delta)^{1/2}}).$
        \item $\mathcal{A}_1=\mathbb{C}+C_0(\mathbb{R}^d),$ $\mathcal{A}_2=C(\Sb^{d-1}),$ $\pi_1(f)=M_f,$ $\pi_2(g)=g(\frac{\nabla}{(-\Delta)^{1/2}}).$
    \end{enumerate}

    \begin{rem} If, in the second case, we take $\mathcal{A}_1=C_b(\mathbb{R}^d)$ {\hightlight (the algebra of bounded continuous functions on $\Rl^d$)} instead of $\mathcal{A}_1=\mathbb{C}+C_0(\mathbb{R}^d),$ then representations $\pi_1$ and $\pi_2$ would not commute modulo compact operators.
    \end{rem}
    \begin{proof} 
        Let $f(t) = e^{i(t,\alpha)}$ and let $g \in C(\Sb^{d-1})$ be non-constant. Denote the homogeneous extension of $g$ to $\Rl^d$
        as $\tilde{g}$. If $U$ denotes the Fourier transform, then
        \begin{equation*}
            U[\pi_1(f),\pi_2(g)]U^* = [e^{i(\alpha,\nabla)},M_{\tilde{g}}].
        \end{equation*}
        As already shown in Lemma \ref{nonhomogeneous commute lemma}, the above commutator is never compact since $\tilde{g}$ is never periodic.
    \end{proof}

    Furthermore, we are able to construct a symbol mapping for the following noncommutative algebras and representations:

    \begin{enumerate}
        \item $\mathcal{A}_1=C(\mathbb{T}^d_{\theta}),$ $\mathcal{A}_2=C(\Sb^{d-1}),$ $\pi_1(f)=M_f,$ $\pi_2(g)=g(\frac{\nabla}{(-\Delta)^{1/2}}).$
        \item $\mathcal{A}_1=\mathbb{C}+C_0(\mathbb{R}^d_{\theta}),$ $\mathcal{A}_2=C(\Sb^{d-1}),$ $\pi_1(f)=M_f,$ $\pi_2(g)=g(\frac{\nabla}{(-\Delta)^{1/2}}).$
    \end{enumerate}
    We also work with $\A_1 = C(\SU(2))$ and $\A_2$ is the $C^*$-algebra generated by the operators $b_1,b_2$ and $b_3$
    on $L_2(\SU(2))$. Here the $\A_2$ is noncommutative, however its image in the Calkin algebra, $q(\A_2)$ is commutative.

\section{Verification that the product mapping is injective}
    The most difficult part of verifying the conditions of Theorem \ref{symbol def} is \ref{symbol def}\eqref{symbc}. In this section
    we verify this condition in each of our examples.
\subsection{Noncommutative $d$-space}
    We start with the following basic fact:
    \begin{lem}\label{kdk lemma} If $T\in\mathcal{K}(H)$ and if $\{p_k\}_{k\geq0}\subset\mathcal{L}(H)$ is a sequence of pairwise orthogonal projections, then $\|Tp_k\|_{\infty}\to0$ as $k\to\infty.$
    \end{lem}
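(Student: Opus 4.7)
The plan is to reduce the claim to the well-known fact that a compact operator sends weakly convergent sequences to norm convergent ones.

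First I would observe that
\begin{equation*}
\|Tp_k\|_\infty^2 = \|p_k T^* T p_k\|_\infty = \sup\bigl\{\|T\eta\|^2 : \eta \in p_k H,\ \|\eta\|=1\bigr\},
\end{equation*}
so the assertion is equivalent to $\sup_{\eta \in p_k H, \|\eta\|=1}\|T\eta\| \to 0$.

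Next I would argue by contradiction: if the supremum does not tend to zero, there exists $\varepsilon>0$, an increasing sequence of indices $k_j$, and unit vectors $\eta_{k_j}\in p_{k_j}H$ with $\|T\eta_{k_j}\|\geq \varepsilon$. The key observation is that since $p_k p_l = 0$ for $k\ne l$, the vectors $\eta_{k_j}$ satisfy $\langle \eta_{k_i},\eta_{k_j}\rangle = \langle p_{k_i}\eta_{k_i},p_{k_j}\eta_{k_j}\rangle=0$ for $i\ne j$, hence form an orthonormal sequence. By Bessel's inequality any orthonormal sequence tends weakly to zero.

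Finally, since $T$ is compact, it maps weakly null sequences to norm null sequences, so $\|T\eta_{k_j}\|\to 0$, contradicting $\|T\eta_{k_j}\|\geq \varepsilon$. This completes the argument.

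There is really no substantial obstacle here: the only point that requires care is spelling out the orthonormality of the selected vectors $\eta_{k_j}$, which follows directly from the pairwise orthogonality of the projections $p_k$. The rest is standard Hilbert space functional analysis.
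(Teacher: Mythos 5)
Your proof is correct, but it takes a genuinely different route from the paper's. The paper argues directly: write $T = T_1 + T_2$ with $T_1$ finite rank and $\|T_2\|_\infty < \varepsilon$, observe that $\|T_1 p_k\|_\infty \leq \|T_1 p_k\|_2$, and then use that $T_1$ is Hilbert--Schmidt together with the pairwise orthogonality of the $p_k$ to get $\sum_k \|T_1 p_k\|_2^2 = \|T_1\sum_k p_k\|_2^2 < \infty$, forcing $\|T_1 p_k\|_2 \to 0$. Your argument instead proceeds by contradiction, extracting an orthonormal sequence $\eta_{k_j} \in p_{k_j}H$ (orthonormality following exactly as you say from $p_{k_i}p_{k_j}=0$), noting it is weakly null by Bessel, and invoking the fact that compact operators send weakly null sequences to norm null sequences. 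Both are standard and short. Yours is arguably more transparent because it reduces the whole thing to the single textbook characterisation of compactness via weak-to-norm continuity on bounded sets, whereas the paper's version is more quantitative (it in fact shows $\|T_1 p_k\|_\infty$ is square-summable for the finite-rank part) and avoids a contradiction argument. One small point worth being explicit about in your write-up: the supremum in $\sup\{\|T\eta\| : \eta\in p_k H,\ \|\eta\|=1\}$ need not be attained when $p_k H$ is infinite dimensional, so when negating the limit you should pick $\eta_{k_j}$ with $\|T\eta_{k_j}\|$ merely within, say, half of the supremum; this changes nothing substantively.
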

    \begin{proof} 
    Let $\varepsilon > 0$, and let $T = T_1+T_2$, where $T_1$ is finite rank and $\|T_2\|_{\infty} < \varepsilon$. 
    Since,
    \begin{equation*}
        \|Tp_k\|_\infty \leq \|T_1p_k\|_\infty + \varepsilon
    \end{equation*}
    it suffices to show that $\|T_1p_k\|_\infty\to 0$.

    Note that $\|T_1p_k\|_\infty \leq \|T_1p_k\|_2$. As each $p_k$ is pairwise orthogonal and $T_1 \in \mathcal{L}_2$,
    \begin{equation*}
        \sum_{k=0}^\infty \|T_1p_k\|_2^2 = \left\|T_1\sum_{k=0}^\infty p_k\right\|_2^2 < \infty.
    \end{equation*}
    Thus $\lim_{k\to\infty} \|T_1p_k\|_2 = 0$.
    %
    \end{proof}
    
    The following lemma verifies Condition \eqref{symbc} of Theorem \ref{symbol def} for the noncommutative $d$-space.
    \begin{lem}\label{ncp ver product} Let $x_k\in C_0(\mathbb{R}^d_{\theta})$ and $y_k\in C(\Sb^{d-1}),$ $1\leq k\leq n.$ If
    $$\sum_{k=1}^n\pi_1(x_k)\pi_2(y_k)\in \mathcal{K}(L_2(\mathbb{R}^d)),$$
    then
    $$\sum_{k=1}^nx_k\otimes y_k=0.$$
    \end{lem}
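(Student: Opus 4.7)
By Theorem \ref{sakai3}, the minimal tensor product $C_0(\Rl^d_\theta)\otimes_{\min} C(\Sb^{d-1})$ is isometrically $*$-isomorphic to $C(\Sb^{d-1}, C_0(\Rl^d_\theta))$, with the algebraic tensor $\sum_k x_k\otimes y_k$ corresponding to the function $\omega \mapsto \sum_k y_k(\omega)x_k$. Hence it suffices to show that $z_\omega := \sum_k y_k(\omega)x_k$ vanishes in $C_0(\Rl^d_\theta)$ for each fixed $\omega\in\Sb^{d-1}$. So fix such an $\omega$.

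First I localize in the direction $\omega$. Choose $R_j\nearrow\infty$ and a nested sequence of open cones $V_j\subset\Rl^d\setminus\{0\}$ about $\omega$ with $\bigcap_j V_j = \{s\omega : s>0\}$. Set $E_j := V_j\cap\{t : R_j\leq|t|<R_{j+1}\}$ and let $p_j$ denote multiplication by $\chi_{E_j}$ on $L_2(\Rl^d)$. The radial shells being disjoint makes $\{p_j\}$ pairwise orthogonal. Since $T := \sum_k\pi_1(x_k)\pi_2(y_k)$ is compact by hypothesis, Lemma \ref{kdk lemma} gives $\|Tp_j\|_\infty\to 0$. Because $\pi_2(y_k)$ is multiplication by the continuous function $t\mapsto y_k(t/|t|)$ and $V_j\cap\Sb^{d-1}$ shrinks to $\{\omega\}$,
$$\|(\pi_2(y_k)-y_k(\omega))p_j\|_\infty\leq\sup_{\omega'\in V_j\cap\Sb^{d-1}}|y_k(\omega')-y_k(\omega)|\to 0.$$
Combining these two facts yields
$$\|\pi_1(z_\omega)p_j\|_\infty\to 0.$$

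The crucial step is to deduce $z_\omega = 0$ from this operator-norm decay. I use the identification $C_0(\Rl^d_\theta)\cong\Kc(L_2(\Rl^{d/2}))$ from Definition \ref{nc plane definition}, which extends to a unitary $L_2(\Rl^d)\cong\Lc_2(L_2(\Rl^{d/2}))$ intertwining $\pi_1$ with left multiplication on Hilbert--Schmidt operators. Fix a unit $\eta\in L_2(\Rl^{d/2})$; for any unit $\zeta\in L_2(\Rl^{d/2})$ the rank-one Hilbert--Schmidt operator $|\eta\rangle\langle\zeta|$ is a unit vector in $L_2(\Rl^d)$, and a direct computation using left multiplication gives
$$\bigl\|\pi_1(z_\omega)\,|\eta\rangle\langle\zeta|\bigr\|_{L_2(\Rl^d)} = \|W(z_\omega)\eta\|_{L_2(\Rl^{d/2})},$$
independent of $\zeta$, where $W(z_\omega)\in\Kc(L_2(\Rl^{d/2}))$ is the compact operator representing $z_\omega$. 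If one can choose $\zeta = \zeta_j$ so that $\xi_j := |\eta\rangle\langle\zeta_j|$, viewed as an element of $L_2(\Rl^d)$, is asymptotically supported in $E_j$, then $\|\pi_1(z_\omega)\xi_j\|\leq\|\pi_1(z_\omega)p_j\|_\infty + o(1)\to 0$. Thus $W(z_\omega)\eta = 0$; varying $\eta$ over $L_2(\Rl^{d/2})$ forces $W(z_\omega) = 0$, i.e.\ $z_\omega = 0$.

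The main obstacle is the construction of $\zeta_j$: the Wigner function of $|\eta\rangle\langle\zeta_j|$ must concentrate inside $E_j\subset\Rl^d$. A natural approach uses coherent states, taking $\eta$ a fixed Gaussian and $\zeta_j$ a Gaussian displaced in phase space to $2R_j\omega$, so that the Wigner transform of $|\eta\rangle\langle\zeta_j|$ is a Gaussian centered at $R_j\omega$ of $O(1)$ spread. Arranging $V_j$ to shrink slowly enough relative to $R_j\to\infty$, these Wigner functions lie essentially inside $E_j$. The non-degeneracy of $\theta$ is what makes this phase-space picture available.
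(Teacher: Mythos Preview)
Your localisation step---choosing spectral projections $p_j=\chi_{E_j}(\nabla)$ in narrowing cones around $\omega$, invoking Lemma~\ref{kdk lemma}, and replacing $\pi_2(y_k)$ by the scalar $y_k(\omega)$---matches the paper's argument essentially word for word (the paper uses unit cubes $m_j+[0,1]^d$ rather than conical shells, but this is cosmetic). The divergence is in how you pass from $\|\pi_1(z_\omega)p_j\|_\infty\to0$ to $z_\omega=0$.

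The paper does this in one stroke: it quotes \cite[Lemma~7.5]{LeSZ_cwikel} to produce unitaries $V_j$ (these are the ``right'' Weyl operators $\tilde U(m_j)$ in the commutant of $L_\infty(\Rl^d_\theta)$) which commute with $\pi_1$ and satisfy $V_j\chi_{m_j+[0,1]^d}(\nabla)V_j^{-1}=\chi_{[0,1]^d}(\nabla)$. Conjugating immediately gives $\pi_1(z_\omega)\chi_{[0,1]^d}(\nabla)=0$, and then $z_\omega=0$. Your route via the $\mathcal{L}_2$-picture and coherent states is really the same mechanism in disguise: replacing $\zeta_0$ by the displaced Gaussian $\zeta_j$ is precisely right multiplication by a Weyl operator, i.e.\ exactly the commutant translation $V_j$. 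What your presentation costs is that you must now \emph{verify} that the unitary $L_2(\Rl^d)\cong\mathcal{L}_2(L_2(\Rl^{d/2}))$ intertwines $\chi_{E}(\nabla)$ with ``Wigner support in $E$''; this is true (multiplication by $u_k$ on $L_2(\Rl^d)$ corresponds to the Jordan map $A\mapsto\tfrac12(X_kA+AX_k)$ on $\mathcal{L}_2$, whose joint spectral calculus is multiplication on the Weyl-symbol side), but you have left it as a heuristic, and it is considerably more work than citing the translation lemma. Your concentration claim for $W_{\eta,\zeta_j}$ is correct for $\eta$ a coherent state (the cross-Wigner function of two Gaussians at phase-space points $0$ and $2R_j\omega$ is indeed a Gaussian of $O(1)$ spread centred at $R_j\omega$), and since coherent states are total in $L_2(\Rl^{d/2})$ this suffices; but ``varying $\eta$ over $L_2(\Rl^{d/2})$'' as stated is too casual, since for general $\eta$ the cross-Wigner function need not concentrate. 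In short: the idea works, but the paper's one-line translation trick accomplishes the same thing without importing the Wigner calculus.
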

    \begin{proof} Fix $s\in \Sb^{d-1}$ and choose a sequence $\{m_j\}_{j\geq0}\subset\mathbb{Z}^d$ such that $\frac{m_j}{|m_j|}\to s$ and $|m_j|\to\infty$ as $j\to\infty.$ It follows that
    $$\sup_{t\in m_j+[0,1]^d}\left|\frac{t}{|t|}-s\right|\to0,\quad j\to\infty.$$
    By continuity, we have
    $$\sup_{t\in m_j+[0,1]^d}\left|y_k(\frac{t}{|t|})-y_k(s)\right|\to0,\quad j\to\infty.$$
    By the spectral theorem, we have
    $$\pi_2(y_k)\chi_{m_j+[0,1]^d}(\nabla)-y_k(s)\chi_{m_j+[0,1]^d}(\nabla)\to0$$
    in the uniform norm as $j\to\infty.$

    By Lemma \ref{kdk lemma}, we have that
    $$\sum_{k=1}^n\pi_1(x_k)\pi_2(y_k)\chi_{m_j+[0,1]^d}(\nabla)\to0$$
    in the uniform norm as $j\to\infty.$ By the preceding paragraph, we have
    $$\sum_{k=1}^n\pi_1(x_k)y_k(s)\chi_{m_j+[0,1]^d}(\nabla)\to0$$
    in the uniform norm as $j\to\infty.$ By Lemma \cite[Lemma 7.5]{LeSZ_cwikel}, there exists a unitary operator $V_j\in\mathcal{L}(L_2(\mathbb{R}^d))$ which commutes with $L_{\infty}(\mathbb{R}^d_{\theta})$ and such that
    $$V_j\chi_{m_j+[0,1]^d}(\nabla)V_j^{-1}=\chi_{[0,1]^d}(\nabla).$$
    Thus,
    $$\sum_{k=1}^n\pi_1(x_k)y_k(s)\chi_{[0,1]^d}(\nabla)=V\cdot(\sum_{k=1}^n\pi_1(x_k)y_k(s)\chi_{m_j+[0,1]^d}(\nabla))\cdot V^{-1}\to0$$
    in the uniform norm as $j\to\infty.$ The left hand side does not depend on $j$ and, therefore,
    $$\sum_{k=1}^n\pi_1(x_k)y_k(s)\chi_{[0,1]^d}(\nabla)=0.$$
    In other words, we have
    $$\pi_1(\sum_{k=1}^nx_k\cdot y_k(s))\cdot \chi_{[0,1]^d}(\nabla)=0.$$
    Thus,
    $$\sum_{k=1}^nx_k\cdot y_k(s)=0.$$
    Since $s\in \Sb^{d-1}$ is arbitrary, the assertion follows.
    \end{proof}

    The preceding Lemma applies for $\Rl^d_\theta$, where as always the assumption is made that $\det(\theta) \neq 0$.
    We also record the following, which applies for the commutative case $\Rl^d$:
    \begin{lem}\label{cp ver product} Let $x_k\in L_{\infty}(\mathbb{R}^d)$ and $y_k\in C(\Sb^{d-1}),$ $1\leq k\leq n.$ If
    $$\sum_{k=1}^n\pi_1(x_k)\pi_2(y_k)\in \mathcal{K}(L_2(\mathbb{R}^d)),$$
    then
    $$\sum_{k=1}^nx_k\otimes y_k=0.$$
    \end{lem}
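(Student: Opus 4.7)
The strategy is to mimic the proof of Lemma \ref{ncp ver product}, replacing the unitary from \cite[Lemma 7.5]{LeSZ_cwikel} by the explicit modulation operator $V_j\xi(t)=e^{i\langle m_j,t\rangle}\xi(t)$. This is legitimate in the present commutative setting because $\pi_1(L_\infty(\Rl^d))$ consists of multiplication operators, which commute with every such modulation.

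First, fix $s\in\Sb^{d-1}$ and choose $\{m_j\}\subset\Itgr^d$ with $m_j/|m_j|\to s$ and $|m_j|\to\infty$, thinning if necessary so that the cubes $m_j+[0,1]^d$ are pairwise disjoint. Uniform continuity of the homogeneous extensions of the $y_k$ on each such cube, together with the spectral theorem, yields
$$\|\pi_2(y_k)\chi_{m_j+[0,1]^d}(\nabla)-y_k(s)\chi_{m_j+[0,1]^d}(\nabla)\|_\infty\to 0.$$
Since the projections $\chi_{m_j+[0,1]^d}(\nabla)$ are pairwise orthogonal, Lemma \ref{kdk lemma} applied to the hypothesised compact operator $T=\sum_{k=1}^n\pi_1(x_k)\pi_2(y_k)$ gives $\|T\chi_{m_j+[0,1]^d}(\nabla)\|_\infty\to 0$, and combining the two estimates produces
$$\Bigl\|\sum_{k=1}^n y_k(s)\pi_1(x_k)\chi_{m_j+[0,1]^d}(\nabla)\Bigr\|_\infty\to 0.$$
A direct Fourier-side computation shows $V_j^{-1}\chi_{m_j+[0,1]^d}(\nabla)V_j=\chi_{[0,1]^d}(\nabla)$, while $V_j^{-1}\pi_1(x_k)V_j=\pi_1(x_k)$; since unitary conjugation preserves norms and the conjugated expression no longer depends on $j$, it vanishes identically. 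Writing $f_s:=\sum_k y_k(s)x_k\in L_\infty(\Rl^d)$, this is the identity $M_{f_s}\chi_{[0,1]^d}(\nabla)=0$.

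The main new step beyond the noncommutative argument of Lemma \ref{ncp ver product}, and the place I expect to be the principal obstacle, is deducing $f_s=0$ from this identity: the previous proof closed by using faithfulness of a trace on $C_0(\Rl^d_\theta)$, which is not available here. Instead I would invoke Paley-Wiener: any nonzero element in the range of $\chi_{[0,1]^d}(\nabla)$ is the restriction to $\Rl^d$ of an entire function on $\Cplx^d$, hence vanishes only on a Lebesgue-null set. Choosing $\xi$ to be the inverse Fourier transform of any nonzero smooth function supported in $[0,1]^d$ then forces $f_s=0$ a.e. As $s$ was arbitrary, the function $(t,s)\mapsto\sum_k x_k(t)y_k(s)$ vanishes on $\Rl^d\times\Sb^{d-1}$; after reducing to a linearly independent subfamily of $\{y_k\}_{k=1}^n$ this forces each $x_k=0$ in $L_\infty(\Rl^d)$, giving $\sum_k x_k\otimes y_k=0$ as required.
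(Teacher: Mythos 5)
Your proposal matches the paper's intended proof: the paper's entire argument for this lemma is the one-line remark that the noncommutative proof goes through \emph{mutatis mutandis} with the explicit modulation $V_j$ replacing the unitary from the cited Cwikel lemma, which is exactly what you do. The sign convention $e^{i\langle m_j,t\rangle}$ versus $e^{-i(m_j,t)}$ is immaterial (it only swaps $V_j$ and $V_j^{-1}$ in the conjugation), and your computation $V_j^{-1}\chi_{m_j+[0,1]^d}(\nabla)V_j=\chi_{[0,1]^d}(\nabla)$ is correct. Passing to a subsequence to make the cubes pairwise disjoint is a small technicality that the paper also elides, but you are right to flag it since Lemma~\ref{kdk lemma} requires orthogonality.

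The one place you go beyond the paper is the deduction of $f_s=0$ from $M_{f_s}\chi_{[0,1]^d}(\nabla)=0$. The paper simply asserts the analogous implication in the noncommutative proof (``Thus, $\sum_k x_k\cdot y_k(s)=0$'') without comment, and does not revisit it here. Your Paley--Wiener argument is correct and fills this in: any nonzero function whose Fourier transform is supported in $[0,1]^d$ extends to an entire function on $\Cplx^d$, hence vanishes only on a Lebesgue-null set, so $f_s\xi=0$ a.e.\ forces $f_s=0$ a.e. (One could even take $\xi=\mathcal{F}^{-1}\chi_{[0,1]^d}$ explicitly, whose zero set is a countable union of hyperplanes.) So this is not a new obstacle peculiar to the commutative case, but a step the paper implicitly leaves to the reader in both lemmas; you have supplied a clean justification for it.
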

    \begin{proof} The argument of Lemma \ref{ncp ver product} works \emph{mutatis mutandi} for this case, by taking
    instead $(V_j\xi)(t) := e^{-i(m_j,t)}\xi(t)$ rather than referring to \cite[Lemma 7.5]{LeSZ_cwikel}.
    \end{proof}

\subsection{Noncommutative $d$-tori}
    {\hightlight The canonical trace state $\tau_\theta$ on $C(\Circ^d_\theta)$ is invariant under the canonical $\Circ^d$-action (defined in Lemma \ref{nabla action lemma} below) which we denote $z_t$. Moreover, integrating $z_t$ over $\Circ^d$ recovers $\tau_\theta$.
    This fact is well known, and appears in \cite[Section 2]{Ha-Lee-Ponge-I-arxiv} and is implicit in \cite[Section 1.2]{XXY}.}
    \begin{lem}\label{nabla action lemma} 

    Let $x \in C(\Circ^d_\theta)$. We define the $1$-parameter family of operators,
    \begin{equation*}
        z_t(x) := e^{i(t,\nabla)}\pi_1(x)e^{-i(t,\nabla)}.
    \end{equation*}
    This family is continuous in the norm topology on $\Bc(L_2(\Circ^d_\theta))$,
    and furthermore we have
    \begin{equation}
    \label{translation integral}
        \int_{[-\pi,\pi]^d} z_t(x)\,dt = (2\pi)^d\tau_\theta(x)1
    \end{equation}
    where the left hand side is a Bochner integral in the norm topology of $\Bc(L_2(\Circ^d_\theta))$,
    and the $1$ on the right hand side is the identity operator.
    \end{lem}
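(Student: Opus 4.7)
The plan is to verify both claims first on generators $u_n$ and then extend by linearity and density of the trigonometric polynomials $\mathcal{P} := \mathrm{span}\{u_n : n \in \Itgr^d\}$ in $C(\Circ^d_\theta)$. Density is almost immediate from Definition \ref{nc torus definition}: antisymmetry of $\theta$ forces $(n,\theta n)=0$, hence $u_nu_{-n}=u_0=1$ so $u_n^*=u_{-n}$, and $u_nu_m\in\Cplx u_{n+m}$. Therefore $\mathcal{P}$ is the unital $*$-subalgebra of $C(\Circ^d_\theta)$ generated by $\{u_n\}$, which is norm-dense by construction of the universal $C^*$-algebra.

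First I would compute $z_t(u_n)$ explicitly. Each $u_m \in L_2(\Circ^d_\theta)$ is a joint eigenvector of $D_1,\ldots,D_d$ with eigenvalues $m_1,\ldots,m_d$, so $e^{i(t,\nabla)}u_m = e^{i(t,m)}u_m$. Since $u_nu_m$ is a scalar multiple of $u_{n+m}$, it is in turn an eigenvector with eigenvalue $e^{i(t,n+m)}$, and hence
\begin{equation*}
z_t(u_n)u_m = e^{i(t,\nabla)}\pi_1(u_n)(e^{-i(t,m)}u_m) = e^{-i(t,m)}e^{i(t,n+m)}u_nu_m = e^{i(t,n)}\pi_1(u_n)u_m,
\end{equation*}
so $z_t(u_n) = e^{i(t,n)}\pi_1(u_n)$ on $L_2(\Circ^d_\theta)$. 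Norm continuity of $t\mapsto z_t(p)$ for $p\in\mathcal{P}$ is then immediate, since $z_t(p)$ is a trigonometric polynomial in $t$ with coefficients in $\pi_1(\mathcal{P})$. For arbitrary $x\in C(\Circ^d_\theta)$, each $z_t$ is an isometry (being conjugation by a unitary), so a standard $3\epsilon$ argument applies: given $\epsilon>0$, approximate $x$ by $p\in\mathcal{P}$ with $\|x-p\|<\epsilon$; then $\|z_t(x)-z_s(x)\|\leq 2\epsilon + \|z_t(p)-z_s(p)\|$, which is less than $3\epsilon$ for $t$ sufficiently close to $s$.

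With norm continuity in hand, the Bochner integral $\int_{[-\pi,\pi]^d}z_t(x)\,dt$ exists in $\Bc(L_2(\Circ^d_\theta))$ and depends norm-continuously on $x$, bounded by $(2\pi)^d\|x\|$. On a generator,
\begin{equation*}
\int_{[-\pi,\pi]^d}z_t(u_n)\,dt = \pi_1(u_n)\int_{[-\pi,\pi]^d}e^{i(t,n)}\,dt = (2\pi)^d\delta_{n,0}\cdot 1 = (2\pi)^d\tau_\theta(u_n)\cdot 1,
\end{equation*}
using that $\tau_\theta(u_n)=\delta_{n,0}$ and $u_0=1$. Both sides of \eqref{translation integral} are continuous linear maps in $x$, so the identity extends from $\mathcal{P}$ to $C(\Circ^d_\theta)$ by density. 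The only subtle point, and thus the main obstacle, is ensuring that $\mathcal{P}$ really is norm-dense under the universal $C^*$-norm; this is handled by the $*$-algebra observation in the first paragraph combined with the universal property of Definition \ref{nc torus definition}.
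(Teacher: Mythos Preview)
Your proposal is correct and follows essentially the same approach as the paper: compute $z_t(u_n)=e^{i(t,n)}\pi_1(u_n)$ on generators, deduce norm continuity first on trigonometric polynomials and then on all of $C(\Circ^d_\theta)$ by density, and verify \eqref{translation integral} on generators before extending by linearity and continuity. Your version is in fact slightly more detailed than the paper's, which simply asserts the density of the linear span of $\{u_n\}$ and the continuity of the norm limit without the explicit $3\varepsilon$ argument.
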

    \begin{proof}
    One can compute,
    \begin{equation*}
        z_t(u_{n}) = e^{i(n,t)}\pi_1(u_n)
    \end{equation*}
    demonstrates the existence of the Bochner integral on the left hand side of \eqref{translation integral}.
    for $n \in \Itgr^d$. Hence $z_t(u_n)$ is norm-continuous in $t$, and if $x$ is a finite linear combination
    of $\{u_n\}_{n=0}^\infty$ then $z_t(x)$ is a linear combination of continuous functions and so is norm-continuous. Additionally,
    elements of $C(\Circ^d_\theta)$ are norm-limits of elements of the linear span of $\{u_n\}_{n\in \Itgr^d}$.
    Hence for all $x \in C(\Circ^d_\theta)$, $z_t(x)$ is a norm limit of continuous functions and so is continuous. This
    demonstrates the existence of the Bochner integral on the left hand side of \eqref{translation integral}.

    To prove the equality in \eqref{translation integral} it is enough to verify the result for $x = u_n$, since both
    sides are linear in $x$ and continuous in the norm topology. Since we have computed $z_t(u_n)$, this is straightforward.
    \end{proof}
    
    {\hightlight  The following lemma verifies Condition \eqref{symbc} in Theorem \ref{symbol def} for $C(\Circ^d_\theta)$.}
    \begin{lem} \label{torus injectivity}
        Let $x_k \in C(\Circ^d_\theta)$ and $y_k \in C(\Sb^{d-1})$, $1 \leq k \leq n$. If
        \begin{equation*}
            \sum_{k=1}^n \pi_1(x_k)\pi_2(y_k) \in \Kc(L_2(\Circ^d_\theta))
        \end{equation*}
        then
        \begin{equation*}
            \sum_{k=1}^n x_k\otimes y_k = 0.
        \end{equation*}
    \end{lem}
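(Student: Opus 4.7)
My plan is to mimic the proof of Lemma \ref{ncp ver product}, but exploiting the fact that on the noncommutative torus the gradient operator $\nabla$ has purely discrete spectrum: the Fourier basis $\{u_n\}_{n \in \Itgr^d}$ is a complete orthonormal set of joint eigenvectors for $D_1,\ldots,D_d$ in $L_2(\Circ^d_\theta)$, with $D_j u_n = n_j u_n$. Accordingly, the spectral projections $\chi_{m_j + [0,1]^d}(\nabla)$ used in the proof for $\Rl^d_\theta$ should be replaced by rank-one spectral projections $p_n$ onto $\Cplx u_n$, which are automatically pairwise orthogonal. The role of the unitary translation $V_j$ (coming from \cite[Lemma 7.5]{LeSZ_cwikel} in the Moyal plane setting) will be absorbed by the tracial identity $\|z u_n\|_{L_2(\Circ^d_\theta)} = \|z\|_{L_2(\Circ^d_\theta)}$, which holds uniformly in $n$.

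Concretely, I would fix $s \in \Sb^{d-1}$ and pick $\{n_j\}_{j \geq 0} \subset \Itgr^d$ with $|n_j| \to \infty$ and $n_j/|n_j| \to s$. Applying functional calculus to the commuting family $(D_1/(-\Delta)^{1/2},\ldots,D_d/(-\Delta)^{1/2})$ on the eigenvector $u_{n_j}$ gives
$$\pi_2(y_k) p_{n_j} = y_k\!\left(\tfrac{n_j}{|n_j|}\right) p_{n_j},$$
so continuity of $y_k$ on $\Sb^{d-1}$ implies $\|(\pi_2(y_k) - y_k(s)) p_{n_j}\|_\infty \to 0$. Since $\sum_{k=1}^n \pi_1(x_k)\pi_2(y_k)$ is compact and the $\{p_n\}_{n \in \Itgr^d}$ are pairwise orthogonal, Lemma \ref{kdk lemma} yields
$$\Bigl\|\sum_{k=1}^n \pi_1(x_k)\pi_2(y_k) p_{n_j}\Bigr\|_\infty \to 0.$$
Combining the two limits produces
$$\Bigl\|\pi_1\!\Bigl(\sum_{k=1}^n y_k(s)\, x_k\Bigr)\, p_{n_j}\Bigr\|_\infty \to 0, \quad j \to \infty.$$

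The crucial final computation is that for any $z \in C(\Circ^d_\theta)$ and any $n \in \Itgr^d$, the rank-one operator $\pi_1(z) p_n$ has operator norm $\|z u_n\|_{L_2(\Circ^d_\theta)}$; since $u_n$ is unitary and $\tau_\theta$ is a trace,
$$\|z u_n\|_{L_2(\Circ^d_\theta)}^2 = \tau_\theta(u_n^* z^* z u_n) = \tau_\theta(z^* z) = \|z\|_{L_2(\Circ^d_\theta)}^2,$$
independent of $n$. Applied with $z = \sum_k y_k(s) x_k$, this forces $\sum_k y_k(s) x_k = 0$ in $L_2(\Circ^d_\theta)$, and then in $C(\Circ^d_\theta)$ by faithfulness of $\tau_\theta$. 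Since $s \in \Sb^{d-1}$ was arbitrary, the identification $C(\Circ^d_\theta) \otimes_{\min} C(\Sb^{d-1}) \cong C(\Sb^{d-1}, C(\Circ^d_\theta))$ from Theorem \ref{sakai3} delivers $\sum_k x_k \otimes y_k = 0$.

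I do not anticipate a genuine obstacle: the step requiring the most care is the tracial identity $\|\pi_1(z) p_n\|_\infty = \|z\|_2$, which is essentially the sole place where the noncommutative structure enters nontrivially and which elegantly substitutes for the translation-conjugation argument used in the Moyal plane case.
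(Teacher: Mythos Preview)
Your proof is correct, but it follows a genuinely different route from the paper's. The paper argues by contradiction via the torus action: assuming $\{y_k\}$ linearly independent and (after multiplying by a suitable element) some $\tau_\theta(x_k)\neq 0$, it conjugates the compact operator by $e^{i(t,\nabla)}$, integrates over $t\in[-\pi,\pi]^d$ using Lemma \ref{nabla action lemma}, and obtains $\sum_k \tau_\theta(x_k)\pi_2(y_k)\in\Kc$, which lies in $\pi_2(C(\Sb^{d-1}))$ and hence must vanish---a contradiction. Your argument instead adapts the Moyal-plane proof (Lemma \ref{ncp ver product}) to the discrete-spectrum situation, replacing the cube projections $\chi_{m_j+[0,1]^d}(\nabla)$ by the rank-one spectral projections $p_{n_j}$ and the translation-conjugation step by the tracial identity $\|zu_n\|_2=\|z\|_2$.

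What each approach buys: the paper's averaging method is structurally parallel to the $\SU(2)$ proof (Lemma \ref{sphere injectivity}) and needs only the group action and its invariant state, not the explicit eigenbasis; it would generalise readily to other compact-group or $C^*$-dynamical situations. Your approach is more elementary---no Bochner integrals, no contradiction setup---and makes transparent exactly where the noncommutativity enters (the unitarity of $u_n$ combined with traciality of $\tau_\theta$). It is also a cleaner parallel to the $\Rl^d_\theta$ case, which may be pedagogically useful.
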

    \begin{proof} 
        If $\sum_{k=1}^n x_k\otimes y_k \neq 0$, we may
        assume without loss of generality that the set $\{y_k\}_{k=1}^n$
        is linearly independent. We may also assume, without loss, that there is at least one $k$
        with $\tau_\theta(x_k) \neq 0$ since if this is not the case, we may find $x$
        with $\tau_\theta(xx_k) \neq 0$ and consider instead the expression $\sum_{k=1}^n \pi_1(xx_k)\pi_2(y_k)$.
        
        We have that for each $t$
        \begin{equation*}
            e^{i(t,\nabla)}\left(\sum_{k=1}^n\pi_1(x_k)\pi_2(y_k)\right)e^{-i(t,\nabla)} \in \Kc(L_2(\Circ^d_\theta)),
        \end{equation*}
        and for each $k$, $e^{i(t,\nabla)}$ commutes with $\pi_2(y_k)$. Hence,
        \begin{equation*}
            \sum_{k=1}^n z_t(x_k)\pi_2(y_k) \in \Kc(L_2(\Circ^d_\theta)).
        \end{equation*}
        and from Lemma \ref{nabla action lemma}, this mapping is norm-continuous
        in the normed space $\Kc(L_2(\Circ^d_\theta))$. Hence,
        \begin{equation*}
            \sum_{k=1}^n\int_{[-\pi,\pi]^d} z_t(x_k)\,dt \cdot \pi_2(y_k) \in \Kc(L_2(\Circ^d_\theta)).
        \end{equation*}
        Finally then $\sum_{k=1}^n \tau_\theta(x_k)\pi_2(y_k) \in \Kc(L_2(\Circ^d_\theta))$,
        but this operator is in $\pi_2(C(\Sb^{d-1}))$ and so is compact if and only if it is zero.
        This contradicts the assumed linear independence of $\{y_k\}_{k=1}^n$.
    \end{proof}

\subsection{Injectivity for $\SU(2)$}
    The proofs for $\SU(2)$ are very similar to the proofs for $\Circ^d_\theta$, {\hightlight however instead of integrating over $\Circ^d$ with the action $z_t$ we integrate with respect
    to the right Haar measure. The following theorem is a simple and well known consequence of the invariance of the Haar measure on a compact group, and we include the proof for convenience.}
    \begin{lem}\label{sphere bochner}
        Let $x \in C(\SU(2))$. Consider the mapping
        \begin{equation*}
            z_g(x) := \lambda_r(g)\pi_1(x)\lambda_r(g)^* = \pi_1(\lambda_r(g)x).
        \end{equation*}
        The mapping $g\to z_g(x)$ is continuous in the norm topology of $\Bc(L_2(\SU(2)))$, and we have
        \begin{equation}\label{sphere integral}
            \int_{\SU(2)} z_g(x)\,dg = \int_{\SU(2)} x(g)\,dg 1
        \end{equation}
        where the integral on the left hand side is a Bochner integral in the operator norm, and the $1$
        on the right hand side is the identity operator.
    \end{lem}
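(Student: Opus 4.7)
The plan is to exploit the fact that $\pi_1$ is a $C^*$-representation, hence contractive, so that both the continuity assertion and the integral formula reduce to statements about the function $\lambda_r(g)x \in C(\SU(2))$, together with standard invariance of Haar measure.

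For norm-continuity, I would write
$$\|z_g(x) - z_{g_0}(x)\| = \|\pi_1(\lambda_r(g)x - \lambda_r(g_0)x)\| \leq \|\lambda_r(g)x - \lambda_r(g_0)x\|_{C(\SU(2))} = \sup_{h \in \SU(2)}|x(hg) - x(hg_0)|.$$
Since $\SU(2)$ is compact, $x$ is uniformly continuous, and combined with joint continuity of multiplication this supremum tends to $0$ as $g \to g_0$. Norm-continuity of a bounded operator-valued function on the compact space $\SU(2)$ guarantees the existence of the Bochner integral in $\Bc(L_2(\SU(2)))$.

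For the evaluation, since $\pi_1$ is bounded and linear it commutes with the Bochner integral, giving
$$\int_{\SU(2)} z_g(x)\,dg = \pi_1\left(\int_{\SU(2)} \lambda_r(g)x\,dg\right),$$
where the inner integral is a well-defined element $y \in C(\SU(2))$ by the same uniform continuity argument. I would then show $y$ is right-invariant, i.e., $\lambda_r(h)y = y$ for every $h$. Using the homomorphism identity $\lambda_r(h)\lambda_r(g) = \lambda_r(hg)$ and a change of variables $g \mapsto h^{-1}g$, one gets $\lambda_r(h)y = \int \lambda_r(hg)x\,dg = \int \lambda_r(g)x\,dg = y$, provided the Haar measure is also \emph{left}-invariant. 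This is where I would invoke the fact that $\SU(2)$ is a compact Lie group and hence unimodular, so the right-invariant Haar measure coincides with the left-invariant one. A right-invariant continuous function on $\SU(2)$ must be constant (compare $y(h) = (\lambda_r(h)y)(e) = y(e)$), and $y(e) = \int_{\SU(2)} x(eg)\,dg = \int_{\SU(2)} x(g)\,dg$. Since $\pi_1$ is unital, this yields $\pi_1(y) = \int_{\SU(2)} x(g)\,dg \cdot 1$, as required.

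There is no real obstacle in the argument; the only point deserving care is the appeal to unimodularity of $\SU(2)$, which is what allows one to freely translate on either side under the integral despite the Haar measure having been introduced as right-invariant.
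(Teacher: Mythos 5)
Your proof is correct and follows essentially the same route as the paper's: use uniform continuity of $x$ on the compact group to get norm continuity of $g\mapsto\lambda_r(g)x$, interchange $\pi_1$ with the Bochner integral, and then show the resulting function in $C(\SU(2))$ is constant via an invariance argument. The one place you are actually \emph{more} careful than the paper is the invariance step. The paper says ``by the right-invariance of the Haar measure,'' but the identity actually needed is $\int x(hg)\,dg = \int x(g)\,dg$, which is a \emph{left} translation under the integral; right-invariance as defined in the preliminaries (i.e.\ $\int f(gh)\,dg=\int f(g)\,dg$) does not directly give this. Your explicit appeal to unimodularity of the compact group $\SU(2)$ --- so that the right Haar measure is also left invariant --- is precisely the point that makes the step rigorous, and it is a genuine (if minor) improvement on the paper's phrasing.
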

    \begin{proof}
        Since $x$ is continuous on a compact space, it is uniformly continuous
        and so the mapping $g\to \lambda_r(g)x$ is continuous in the uniform norm. Hence, $g\to z_g$ is continuous
        in the operator norm.
        
        Due to the norm continuity of $\pi_1$,
        \begin{equation*}
            \pi_1\left(\int_{\SU(2)}\lambda_r(g)x\,dg\right) = \int_{\SU(2)} z_g(x)\,dg
        \end{equation*}
        By the right-invariance of the Haar measure, the left hand side is simply $$\int_{\SU(2)} x(g)\,dg \cdot \pi_1(1).$$
    \end{proof}
    
    {\hightlight  The following lemma, analogous to Lemma \ref{torus injectivity}, verifies Condition \eqref{symbc} of Theorem \ref{symbol def} for $C(\SU(2))$.}
    \begin{lem} \label{sphere injectivity}
        Let $x_k \in C(\SU(2))$ and $y_k \in \A_2$, $1 \leq k \leq n$. If 
        \begin{equation*}   
            \sum_{k=1}^n\pi_1(x_k)\pi_2(y_k) \in \Kc(L_2(\SU(2)))
        \end{equation*}
        then
        \begin{equation*}
            \sum_{k=1}^n x_k\otimes\rho_2(y_k) = 0.
        \end{equation*}
    \end{lem}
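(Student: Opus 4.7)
The plan is to follow the template of Lemma \ref{torus injectivity} with the canonical torus action $z_t$ replaced by the conjugation action $z_g$ supplied by Lemma \ref{sphere bochner}. The essential commutation ingredient is that the right regular representation $\lambda_r$ commutes with $\pi_2(\A_2)$: since $\lambda_r$ commutes with $\lambda_l$, it commutes with each generator $D_k$ and with $-\Delta = D_1^2+D_2^2+D_3^2$, and hence with every $b_k = D_k(-\Delta)^{-1/2}$. Consequently, for each $g \in \SU(2)$,
$$\lambda_r(g)\Big(\sum_{k=1}^n \pi_1(x_k)\pi_2(y_k)\Big)\lambda_r(g)^* = \sum_{k=1}^n z_g(x_k)\pi_2(y_k),$$
which remains compact.

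Arguing by contradiction, I would suppose $\sum_k x_k \otimes \rho_2(y_k) \neq 0$ in $C(\SU(2))\otimes_{\min} q(\A_2)$, and reduce to the case where $\{\rho_2(y_k)\}_{k=1}^n$ is linearly independent in $q(\A_2)$ (by gathering terms). In that case at least one $x_k$ must be nonzero; I would then choose $x \in C(\SU(2))$, for instance $x = \overline{x_{k_0}}$ for an index $k_0$ with $x_{k_0} \neq 0$, so that the scalar $c_{k_0} := \int_{\SU(2)} (xx_{k_0})(g)\,dg = \|x_{k_0}\|_2^2 > 0$. Premultiplying by $\pi_1(x)$ preserves compactness, giving $\sum_k \pi_1(xx_k)\pi_2(y_k) \in \Kc(L_2(\SU(2)))$.

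By Lemma \ref{sphere bochner}, $g \mapsto \sum_k z_g(xx_k)\pi_2(y_k)$ is norm-continuous into $\Kc(L_2(\SU(2)))$; integrating against the Haar measure yields
$$\sum_{k=1}^n c_k \pi_2(y_k) \in \Kc(L_2(\SU(2))), \qquad c_k := \int_{\SU(2)} (xx_k)(g)\,dg,$$
and applying the quotient map $q$ produces a nontrivial linear relation $\sum_k c_k \rho_2(y_k) = 0$ in $q(\A_2)$, contradicting the assumed linear independence. The only step requiring care is the verification that $\lambda_r$ commutes with $\A_2$; once this is in place, the argument is structurally identical to Lemma \ref{torus injectivity} and poses no real obstacle.
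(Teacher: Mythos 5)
Your proof is correct and follows essentially the same approach as the paper: the paper's proof is a brief adaptation of Lemma \ref{torus injectivity}, replacing the torus action $z_t$ by the conjugation action $z_g = \lambda_r(g)\cdot\lambda_r(g)^*$ and invoking Lemma \ref{sphere bochner} for continuity and evaluation of the Bochner integral, exactly as you do. Your only elaborations are the explicit verification that $\lambda_r$ commutes with each $b_k$ (via $D_k$ and $\Delta$) and the concrete choice $x = \overline{x_{k_0}}$ to force a nonzero coefficient, both of which the paper leaves implicit but which are entirely in the spirit of its argument.
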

    \begin{proof}
        This proof is very similar to Lemma \ref{torus injectivity}. Again
        if $\sum_{k=1}^n x_k\otimes \rho_2(y_k) \neq 0$, then we may assume that the set $\{\rho_2(y_k)\}_{k=1}^n$
        is linearly independent to find a contradiction and we may also assume that there
        is at least one $k$ with $\int_{\SU(2)} x_k(g)\,dg \neq 0$. From Lemma \ref{sphere bochner}, 
        the mapping 
        \begin{equation*}
            g \mapsto \lambda_r(g)\left(\sum_{k=1}^n \pi_1(x_k)\pi_2(y_k)\right)\lambda_r(g)^*
        \end{equation*}
        is norm-continuous, hence 
        \begin{equation*}
            \int_{\SU(2)} \lambda_r(g)\left(\sum_{k=1}^n \pi_1(x_k)\pi_2(y_k)\right)\lambda_r(g)^* \,dg \in \Kc(L_2(\SU(2))).
        \end{equation*}
        However, $\lambda_r(g)$ commutes with each $D_j$ (since right actions commute with left actions), and hence with each $\pi_2(y_k)$. Thus from Lemma \ref{sphere bochner},
        \begin{equation*}
            \sum_{k=1}^n \int_{\SU(2)} x_k(g)\,dg\cdot \pi_2(y_k) \in \Kc(L_2(\SU(2))).
        \end{equation*}
        Applying the quotient map $q$, we obtain a linear dependence of $\{\rho_2(y_k)\}_{k=1}^d$, and this is a contradiction.
    \end{proof} 

\section{Verification of the commutator condition}
    We now verify the ``commuting modulo compacts" condition \ref{symbol def}\eqref{symbb} in each of our three examples: the noncommutative torus, noncommutative Euclidean space
    and $\SU(2)$.

    We cover only the case of noncommutative Euclidean space in detail, as the other examples are very similar. The following
    Lemma follows the proof of \cite[Proposition 2.14]{CGRS2}.
    \begin{lem}\label{ncp ver l1} If $x\in W^{2,2}(\mathbb{R}^d_{\theta}),$ then
    \begin{equation}\label{ncp ver l1 eq1}
    [(1-\Delta)^{\frac12},\pi_1(x)](1-\Delta)^{-\frac12}\in\mathcal{K}(L_2(\mathbb{R}^d)).
    \end{equation}
    \end{lem}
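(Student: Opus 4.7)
The plan is to derive an integral formula for the commutator. Using the standard representation
$$(1-\Delta)^{1/2} = \frac{1}{\pi}\int_0^\infty t^{-1/2}(1-\Delta)(1-\Delta+t)^{-1}\,dt$$
(valid since $1-\Delta\geq 1$) together with the resolvent commutator identity $[(1-\Delta+t)^{-1},\pi_1(x)] = (1-\Delta+t)^{-1}[1-\Delta,\pi_1(x)](1-\Delta+t)^{-1}$, I obtain the key identity
$$[(1-\Delta)^{1/2},\pi_1(x)] = \frac{1}{\pi}\int_0^\infty t^{1/2}(1-\Delta+t)^{-1}[1-\Delta,\pi_1(x)](1-\Delta+t)^{-1}\,dt.$$

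To compute the inner commutator, I would use that $-\Delta = \sum_j D_j^2$ with the $D_j$ mutually commuting, and the derivation identity $[D_j,\pi_1(y)] = -i\pi_1(\partial_j y)$, which after an elementary Leibniz expansion yields
$$[1-\Delta,\pi_1(x)] = -2i\sum_j \pi_1(\partial_j x)D_j - \sum_j \pi_1(\partial_j^2 x).$$
Since $x\in W^{2,2}(\Rl^d_\theta)$, both $\partial_j x$ and $\partial_j^2 x$ lie in $L_2(\Rl^d_\theta)$. Multiplying the integrand on the right by $(1-\Delta)^{-1/2}$ and commuting $D_j$ past the resolvents (legal because $[D_j,\Delta]=0$) reduces the task to showing that, for each $y\in L_2(\Rl^d_\theta)$ and each bounded $S\in\{D_j(1-\Delta)^{-1/2},(1-\Delta)^{-1/2}\}$, the Bochner integral
$$\int_0^\infty t^{1/2}(1-\Delta+t)^{-1}\pi_1(y)(1-\Delta+t)^{-1}S\,dt$$
converges in $\Kc(L_2(\Rl^d))$ — note that $D_j(1-\Delta)^{-1/2}$ is bounded by spectral calculus.

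The final step is a density and Cwikel argument. I would approximate $y$ by a sequence $y_n$ of sufficiently smooth elements for which the Cwikel-type estimates of Section 2.4.1 (Theorem \ref{ncp cwikel lp} with $p>d$, or Corollary \ref{ncp specific cwikel}) directly yield that $\pi_1(y_n)(1-\Delta+t)^{-1}$ is in a Schatten class and in particular is compact, so each integrand is compact. The uniform bound $\|(1-\Delta+t)^{-1}\|_{\mathrm{op}}\leq (1+t)^{-1}$ gives the integrand norm the decay $t^{1/2}(1+t)^{-2}$, rendering the Bochner integral absolutely convergent. The main obstacle will be to reconcile the norm of approximation $y_n\to y$ with the norm required for the Cwikel-type estimate to produce compact-ideal convergence of the full operator: since $\pi_1(y)$ need not be bounded when $y\in L_2(\Rl^d_\theta)\setminus L_\infty(\Rl^d_\theta)$, the uniform-in-$t$ control on $\pi_1(y-y_n)(1-\Delta+t)^{-1}$ has to come entirely from a Schatten bound of Cwikel type, with the approximating norm chosen compatibly so that vanishing Schatten norm of the difference transfers to vanishing compact-operator norm of the integral, allowing passage to the limit and yielding \eqref{ncp ver l1 eq1}.
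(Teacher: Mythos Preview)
Your integral formula and Leibniz expansion are exactly the starting point the paper uses. The paper writes $(1-\Delta)^{-1/2}=\frac{1}{\pi}\int_0^\infty\lambda^{-1/2}(1+\lambda-\Delta)^{-1}\,d\lambda$ and obtains the same integrand $\lambda^{1/2}(1+\lambda-\Delta)^{-1}[-\Delta,\pi_1(x)](1+\lambda-\Delta)^{-1}$ that you do. From there the paper proceeds differently: it performs a further algebraic rearrangement, introducing $A=(1-\Delta)^{-1/2}[-\Delta,\pi_1(x)]$ and the \emph{double} commutator $B=(1-\Delta)^{-1}[-\Delta,[-\Delta,\pi_1(x)]]$, so that the $\lambda$-integral splits into an exactly computable piece $\frac12 A(1-\Delta)^{-1/2}$ and a residual integral with an extra factor $(1+\lambda)^{-1}$. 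This yields the sharper conclusion $[(1-\Delta)^{1/2},\pi_1(x)](1-\Delta)^{-1/2}\in\mathcal{L}_{d+1}$, not merely compactness. Your more direct route (keep the integral, show each integrand is compact, integrate in operator norm using $t^{1/2}(1+t)^{-2}\in L_1(0,\infty)$) is perfectly adequate for the compactness statement actually claimed in the lemma.

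The ``main obstacle'' you flag at the end is not a real obstacle, and this is the one conceptual point you are missing. Because $\det\theta\neq 0$, the algebra $L_\infty(\Rl^d_\theta)$ is a type I factor isomorphic to $\mathcal{L}(L_2(\Rl^{d/2}))$, with $\tau_\theta$ the standard trace (Definition~\ref{nc plane definition}). Consequently $L_p(\Rl^d_\theta)\cong\mathcal{L}_p(L_2(\Rl^{d/2}))$, and in particular $L_2(\Rl^d_\theta)\subset L_p(\Rl^d_\theta)\subset L_\infty(\Rl^d_\theta)$ for every $p\geq 2$. Thus for $y=\partial_j x$ or $y=\partial_j^2 x\in L_2(\Rl^d_\theta)$ the operator $\pi_1(y)$ is automatically bounded, and Theorem~\ref{ncp cwikel lp} with $p=d+1$ applies \emph{directly} to give $\pi_1(y)(1-\Delta+t)^{-1}\in\mathcal{L}_{d+1}$ (note $(1+|\cdot|^2+t)^{-1}\in L_{d+1}(\Rl^d)$). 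No approximation step is needed; the paper's own proof also tacitly uses exactly this inclusion $L_2\subset L_{d+1}$ when it invokes Theorem~\ref{ncp cwikel lp} on $[D_k,\pi_1(x)](1-\Delta)^{-1/2}$. Once you drop the unnecessary approximation, your argument is complete.
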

    \begin{proof}  We have
    $$(1-\Delta)^{-\frac12}=\frac1{\pi}\int_0^{\infty}\frac{d\lambda}{\lambda^{\frac12}(1+\lambda-\Delta)}.$$
    Thus,
    $$[(1-\Delta)^{\frac12},\pi_1(x)]=\frac1{\pi}\int_0^{\infty}\frac{d\lambda}{\lambda^{\frac12}}\Big[\frac{1-\Delta}{1+\lambda-\Delta},\pi_1(x)\Big].$$
    Denote for brevity
    $$A=(1-\Delta)^{-\frac12}[-\Delta,\pi_1(x)],\quad B=(1-\Delta)^{-1}[-\Delta,[-\Delta,\pi_1(x)]].$$
    A little algebra gives us
    $$\Big[\frac{1-\Delta}{1+\lambda-\Delta},\pi_1(x)\Big]=\Big(\frac{(1-\Delta)^{\frac12}}{(1+\lambda-\Delta)}-\frac{(1-\Delta)^{\frac32}}{(1+\lambda-\Delta)^2}\Big)A+\frac{\lambda(1-\Delta)}{(1+\lambda-\Delta)^2}B\frac1{1+\lambda-\Delta}.$$
    It is easy to see that
    $$\frac1{\pi}\int_0^{\infty}\Big(\frac{(1-\Delta)^{\frac12}}{(1+\lambda-\Delta)}-\frac{(1-\Delta)^{\frac32}}{(1+\lambda-\Delta)^2}\Big)\frac{d\lambda}{\lambda^{\frac12}}=\frac12.$$
    Thus, we can write (here, LHS denotes the left hand side in \eqref{ncp ver l1 eq1})
    $$LHS=\frac12 A(1-\Delta)^{-\frac12}+\frac1{\pi}\int_0^{\infty}\frac{d\lambda}{\lambda^{\frac12}}\frac{\lambda(1-\Delta)}{(1+\lambda-\Delta)^2}B\frac1{(1+\lambda-\Delta)(1-\Delta)^{\frac12}}.$$

    Clearly,
    $$A(1-\Delta)^{-\frac12}=\sum_{k=1}^d\frac{D_k}{(1-\Delta)^{\frac12}}\cdot [D_k,\pi_1(x)](1-\Delta)^{-\frac12}+\sum_{k=1}^d(1-\Delta)^{-\frac12}[D_k,\pi_1(x)]\cdot \frac{D_k}{(1-\Delta)^{\frac12}}.$$
    It follows from Theorem \ref{ncp cwikel lp} that
    $$[D_k,\pi_1(x)](1-\Delta)^{-\frac12},(1-\Delta)^{-\frac12}[D_k,\pi_1(x)]\in\mathcal{L}_{d+1}(L_2(\mathbb{R}^d)).$$
    Thus,
    $$A(1-\Delta)^{-\frac12}\in\mathcal{L}_{d+1}\mbox{ and, similarly, }B(1-\Delta)^{-\frac12}\in\mathcal{L}_{d+1}.$$

    Taking into account that
    $$\Big\|\frac{\lambda(1-\Delta)}{(1+\lambda-\Delta)^2}\Big\|_{\infty}\leq1,\quad\Big\|\frac1{1+\lambda-\Delta}\Big\|_{\infty}\leq\frac1{1+\lambda},$$
    we obtain
    $$\Big\|\frac{\lambda(1-\Delta)}{(1+\lambda-\Delta)^2}B\frac1{(1+\lambda-\Delta)(1-\Delta)^{\frac12}}\Big\|_{d+1}\leq\frac1{1+\lambda}\Big\|B(1-\Delta)^{-\frac12}\Big\|_{d+1}.$$
    Therefore,
    $$\|LHS\|_{d+1}\leq\frac12\|A(1-\Delta)^{-\frac12}\|_{d+1}+\frac1{\pi}\int_0^{\infty}\frac{d\lambda}{\lambda^{\frac12}(1+\lambda)}\cdot\|B(1-\Delta)^{-\frac12}\|_{d+1}.$$
    Since the right hand side is finite, the assertion follows.
    \end{proof}

    The operators $\frac{D_k}{\sqrt{-\Delta}}$, $k = 1,\ldots,d$ are the noncommutative equivalent of the Riesz transforms $R_k$. 
    The following Lemma can be viewed as a noncommutative variant of the classical result that if $f \in C_0(\Rl^d)$, then the commutators $[M_f,R_k]$
    are compact.
    \begin{lem}\label{ncp ver l2} If $x\in W^{2,2}(\mathbb{R}^d_{\theta}),$ then
    \begin{equation}\label{ncp ver l2 eq1}
    [\pi_1(x),\frac{D_k}{(-\Delta)^{\frac12}}]\in\mathcal{K}(L_2(\mathbb{R}^d)),\quad k = 1,\ldots,d
    \end{equation}
    \end{lem}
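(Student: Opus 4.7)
The plan is to reduce the claim to Lemma \ref{ncp ver l1} by comparing $R_k := D_k(-\Delta)^{-1/2}$ with its regularised analogue $\tilde{R}_k := D_k(1-\Delta)^{-1/2}$. Writing $R_k = \tilde{R}_k + E_k$ with $E_k := R_k - \tilde{R}_k$, the commutator splits as $[\pi_1(x), R_k] = [\pi_1(x), \tilde{R}_k] + [\pi_1(x), E_k]$. The observation that will be used throughout is that, on $L_2(\Rl^d)$, both $D_k$ and $(1-\Delta)^{-1/2}$ are multiplication operators and hence commute.

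For the regular piece $[\pi_1(x), \tilde{R}_k]$, the Leibniz rule together with the identity $[\pi_1(x), (1-\Delta)^{-1/2}] = (1-\Delta)^{-1/2}[(1-\Delta)^{1/2}, \pi_1(x)](1-\Delta)^{-1/2}$ yields
\[
[\pi_1(x), \tilde{R}_k] \;=\; [\pi_1(x), D_k](1-\Delta)^{-1/2} \;+\; \tilde{R}_k \cdot [(1-\Delta)^{1/2}, \pi_1(x)](1-\Delta)^{-1/2}.
\]
The first summand lies in $\Lc_{d+1}$ by the Cwikel-type estimate (Theorem \ref{ncp cwikel lp}) that was already used inside the proof of Lemma \ref{ncp ver l1}, and the second summand is the bounded operator $\tilde{R}_k$ composed with the compact operator produced by Lemma \ref{ncp ver l1} itself. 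Both are therefore compact.

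For the remainder $[\pi_1(x), E_k]$, a direct computation shows that $E_k$ is multiplication on $L_2(\Rl^d)$ by the bounded function $h_k(t) := t_k\bigl(|t|^{-1} - (1+|t|^2)^{-1/2}\bigr)$, which satisfies the decay estimate $|h_k(t)| \lesssim (1+|t|^2)^{-1}$. This lets us factor $E_k = g(\nabla)(1-\Delta)^{-1/2}$ for some $g \in L_\infty(\Rl^d)$, and expanding $[\pi_1(x), E_k]$ by the Leibniz rule produces a bounded-times-compact piece handled again by Lemma \ref{ncp ver l1}, plus a residual term $[\pi_1(x), g(\nabla)](1-\Delta)^{-1/2}$ which is treated by iterating the comparison and exploiting the further decay inherited by $g$. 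The step I expect to be the main technical obstacle is exactly this remainder analysis: the singularity of $(-\Delta)^{-1/2}$ at the origin is absent from Lemma \ref{ncp ver l1} and must first be isolated inside $E_k$ so that the Cwikel-type machinery underlying Lemma \ref{ncp ver l1} can be brought to bear.
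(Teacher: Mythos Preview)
Your decomposition $R_k=\tilde R_k+E_k$ and your treatment of the regularised piece $[\pi_1(x),\tilde R_k]$ match the paper's argument exactly: the paper writes
\[
[\pi_1(x),\tilde R_k]=-[D_k,\pi_1(x)](1-\Delta)^{-1/2}+\tilde R_k\cdot[(1-\Delta)^{1/2},\pi_1(x)](1-\Delta)^{-1/2},
\]
and handles the two summands by Theorem~\ref{ncp cwikel lp} and Lemma~\ref{ncp ver l1}, just as you do.

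Where you diverge from the paper, and where your argument is left incomplete, is the remainder $E_k=h_k(\nabla)$. You propose to factor $E_k=g(\nabla)(1-\Delta)^{-1/2}$, expand $[\pi_1(x),E_k]$ by Leibniz, and then ``iterate the comparison'' on the residual term $[\pi_1(x),g(\nabla)](1-\Delta)^{-1/2}$. You do not carry this iteration out, and as stated it is unclear how it terminates: after one further factoring the new multiplier has no decay left, and you are back to a bare commutator $[\pi_1(x),g'(\nabla)]$ with $g'$ merely bounded. This is the gap.

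The paper's treatment of $E_k$ is much more direct and avoids the iteration entirely. The point you are missing is that once $h_k$ itself has the decay $|h_k(t)|\lesssim(1+|t|^2)^{-1}$, the \emph{commutator structure is irrelevant}: one simply writes
\[
[\pi_1(x),h_k(\nabla)]=\pi_1(x)h_k(\nabla)-h_k(\nabla)\pi_1(x),
\]
and observes that each of the two products is separately in a Schatten class by a single application of the Cwikel estimate (Theorem~\ref{ncp cwikel lp}), since $h_k\in L_{d+1}(\Rl^d)$. There is no need to factor through $(1-\Delta)^{-1/2}$ or to invoke Lemma~\ref{ncp ver l1} a second time.
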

    \begin{proof} Firstly, we consider the commutator
    $$[\pi_1(x),\frac{D_k}{(1-\Delta)^{\frac12}}]=-[D_k,\pi_1(x)](1-\Delta)^{-\frac12}+\frac{D_k}{(1-\Delta)^{\frac12}}\cdot [(1-\Delta)^{\frac12},\pi_1(x)](1-\Delta)^{-\frac12}.$$
    Using Theorem \ref{ncp cwikel} for the first summand and Lemma \ref{ncp ver l1} for the second summand, we infer that
    $$[\pi_1(x),\frac{D_k}{(1-\Delta)^{\frac12}}]\in\mathcal{K}(L_2(\mathbb{R}^d)).$$

    Define a function $h_k$ on $\mathbb{R}^d$ by setting
    $$h_k(t)=\frac{t_k}{|t|}-\frac{t_k}{(1+|t|^2)^{\frac12}}=\frac{t_k}{|t|}\cdot\frac{1}{(1+|t|^2)^{\frac12}\cdot ((1+|t|^2)^{\frac12}+|t|)},\quad t\in\mathbb{R}^d.$$
    It follows from Theorem \ref{ncp cwikel} that
    $$[\pi_1(x),h_k(\nabla)]=\pi_1(x)h_k(\nabla)-h_k(\nabla)\pi_1(x)\in\mathcal{L}_{d+1}(\mathbb{R}^d_{\theta}).$$
    Thus,
    $$[\pi_1(x),\frac{D_k}{(-\Delta)^{\frac12}}]=[\pi_1(x),\frac{D_k}{(1-\Delta)^{\frac12}}]+[\pi_1(x),h_k(\nabla)]\in\mathcal{K}(L_2(\mathbb{R}^d)).$$
    \end{proof}

    Now we may complete the verifications of the condition \ref{symbol def}\eqref{symbb} for $\Rl^d_\theta$,
    \begin{thm}\label{ncp ver commutator} If $x\in C_0(\mathbb{R}^d_{\theta})$ and if $y\in C(\Sb^{d-1}),$ then $[\pi_1(x),\pi_2(y)]\in\mathcal{K}(L_2(\mathbb{R}^d)).$
    \end{thm}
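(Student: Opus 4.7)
The plan is to combine Lemma \ref{ncp ver l2} with a density argument and Stone--Weierstrass to extend compactness of the commutators from a single generating family in both factors to all of $C_0(\Rl^d_\theta) \otimes C(\Sb^{d-1})$.

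First, I would fix a dense $*$-subalgebra of $C(\Sb^{d-1})$ that interacts well with the Fourier multiplier $\pi_2$. Define
\begin{equation*}
    S := \{y \in C(\Sb^{d-1}) : [\pi_1(x),\pi_2(y)] \in \Kc(L_2(\Rl^d)) \text{ for every } x \in C_0(\Rl^d_\theta)\}.
\end{equation*}
Since $\pi_2$ is a $*$-homomorphism and $\Kc(L_2(\Rl^d))$ is a norm-closed two-sided ideal, $S$ is norm-closed in $C(\Sb^{d-1})$, closed under $*$, and closed under products via the Leibniz identity
\begin{equation*}
    [\pi_1(x),\pi_2(y_1 y_2)] = [\pi_1(x),\pi_2(y_1)]\pi_2(y_2) + \pi_2(y_1)[\pi_1(x),\pi_2(y_2)].
\end{equation*}
Thus $S$ is a closed unital $*$-subalgebra of $C(\Sb^{d-1})$.

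Next, I would show the coordinate functions $y_k(\omega) := \omega_k$, $k=1,\ldots,d$, belong to $S$. By definition $\pi_2(y_k) = D_k/(-\Delta)^{1/2}$, so Lemma \ref{ncp ver l2} gives that $[\pi_1(x), \pi_2(y_k)] \in \Kc(L_2(\Rl^d))$ whenever $x \in W^{2,2}(\Rl^d_\theta)$. By Lemma \ref{ncp density lemma}, $W^{2,2}(\Rl^d_\theta)$ is norm-dense in $C_0(\Rl^d_\theta)$, and since $x \mapsto [\pi_1(x),\pi_2(y_k)]$ is continuous from $C_0(\Rl^d_\theta)$ into $\Bc(L_2(\Rl^d))$ with values in the norm-closed ideal $\Kc(L_2(\Rl^d))$ on a dense subset, the compactness extends to all of $C_0(\Rl^d_\theta)$. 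Hence $y_k \in S$ for each $k$.

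Finally, the real functions $y_1,\ldots,y_d$ separate points of $\Sb^{d-1}$, and $S$ contains $1 = y_1^2 + \cdots + y_d^2$, so the Stone--Weierstrass theorem forces $S = C(\Sb^{d-1})$, which is exactly the claim. The main technical work has already been done in Lemma \ref{ncp ver l2}; the only subtlety here is checking that the class of good $y$'s really forms a closed $*$-algebra, for which the ideal property of $\Kc$ is essential.
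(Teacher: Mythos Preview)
Your argument is correct and follows essentially the same route as the paper: Lemma~\ref{ncp ver l2} for the coordinate generators, density of $W^{2,2}(\Rl^d_\theta)$ in $C_0(\Rl^d_\theta)$ via Lemma~\ref{ncp density lemma} to pass to general $x$, the Leibniz rule to reach polynomials, and Stone--Weierstrass to conclude. Packaging the ``good'' functions as a closed $*$-subalgebra $S$ is a clean way to organise the same steps; there is no substantive difference from the paper's proof.
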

    \begin{proof}
    Lemma \ref{ncp ver l2} shows that $[\pi_1(x),\pi_2(y)] \in \Kc(L_2(\Rl^d))$ when $x \in W^{2,2}(\Rl^d_\theta)$ and $y(t) = \frac{t_k}{|t|}$. 
    As $W^{2,2}(\Rl^d_\theta)$ is norm-dense in $C_0(\Rl^d_\theta)$ (see Lemma \ref{ncp cwikel}) and the compact operators
    are closed in the norm topology, the result follows for arbitrary $x \in C_0(\Rl^d_\theta)$ and $y(t) = \frac{t_k}{|t|}$.
    
    We may now extend the result to all $y$ given as a polynomial in the variables $\frac{t_k}{|t|}$ using the Leibniz rule. Finally
    by the Stone-Weierstrass theorem, we may approximate arbitrary $y \in C(\Sb^{d-1})$ by polynomials in the uniform norm. Hence
    again using the fact that $\Kc(L_2(\Rl^d))$ is norm-closed, this completes the proof.
    %
    \end{proof}

\section{Connes' Trace Formula in the examples}
    We now proceed to establish a variant of Connes' Trace Theorem which applies
    to our examples. 
    Let $H$ be a separable Hilbert space. We recall that a linear functional $\varphi:\Lc_{1,\infty}(H)\to \Cplx$ is called
    a continuous trace if $\varphi([A,B]) = 0$ for all $A \in \Bc(H)$ and $B \in \Lc_{1,\infty}(H)$
    and $|\varphi(B)| \lesssim \|B\|_{1,\infty}$. We will call $\varphi$ normalised if
    \begin{equation*}
        \varphi\left(\diag\left\{\frac{1}{n+1}\right\}_{n=0}^\infty\right) = 1.
    \end{equation*}

    If $\varphi$ is a normalised trace, then note also that
    \begin{equation*}
        \varphi\left(\diag\left\{\frac{1}{(1+|n|^2)^{d/2}}\right\}\right) = \frac{\mathrm{Vol}(\Sb^{d-1})}{d}.
    \end{equation*}
    It is known (see \cite[Corollary 5.7.7]{LSZ}) that any continuous trace $\varphi$ on $\Lc_{1,\infty}(H)$ vanishes on $\Kc(H)\cdot \Lc_{1,\infty}(H)$.

    Let $\A_1$, $\A_2$ be the algebras in any of our three examples. We establish that for any continuous normalised trace $\varphi$
    on $\Lc_{1,\infty}$, and $T \in \Pi(\A_1,\A_2)$,
    \begin{equation}\label{cif general}
        \varphi(T(1-\Delta)^{-d/2}) = c(\A_1,\A_2)\left(\tau_\theta\otimes \int_{\Sb^{d-1}}\right)(\sym(T)).
    \end{equation}
    where $\tau_\theta$ is replaced by integration with respect to the Haar measure in the example of $\SU(2)$, and $c(\A_1,\A_2)$ is a nonzero constant depending
    on the choices of $\A_1$ and $\A_2$.
    For \eqref{cif general} to be hold for noncommutative Euclidean space, we must make the additional assumption that there exists $z \in W^{d,1}(\Rl^d_\theta)$
    such that $T\pi_1(z) = T$. This result should be compared with \cite[Corollary 11.6.20]{LSZ}, which applies to classical
    pseudodifferential operators $T$ on $\Rl^d$ with the condition that there exists $\psi \in C^\infty_c(\Rl^d)$ such that $TM_{\psi} = T$. 
    
    {\hightlight  The formula \eqref{cif general} is our version of Connes' Trace Theorem, and we verify it in each of our three examples: first for noncommutative tori (Theorem \ref{ctt nc torus}), for $\SU(2)$ (Theorem \ref{ctt sphere}) and for noncommutative spaces
    (Theorem \ref{ctt nc plane}). }

    To establish \eqref{cif general} for all of our examples, we use the following two results. Lemma \ref{lifting lemma}
    follows immediately from the fact that any continuous normalised trace on $\Lc_{1,\infty}(H)$ vanishes on $\Kc(H) \cdot \Lc_{1,\infty}(H)$
    \begin{lem}\label{lifting lemma}
        Let $V \in \Lc_{1,\infty}(H)$, and let $\varphi$ be a continuous trace on $\Lc_{1,\infty}(H)$. Then,
        \begin{equation*}
            T \mapsto \varphi(TV)
        \end{equation*}
        is a continuous linear functional on $\Bc(H)$ which vanishes on $\Kc(H)$.
    \end{lem}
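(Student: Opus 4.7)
The plan is to observe that the entire lemma reduces to three elementary properties of $\Lc_{1,\infty}(H)$ and the hypothesis on $\varphi$, with no real obstacle. First I would note that $\Lc_{1,\infty}(H)$ is a two-sided ideal in $\Bc(H)$, so for any $T \in \Bc(H)$ the product $TV$ lies in $\Lc_{1,\infty}(H)$ and the expression $\varphi(TV)$ is well-defined. Linearity in $T$ is then immediate from the linearity of left multiplication by $V$ and of $\varphi$.

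Next I would establish continuity by invoking the quasi-norm estimate
\begin{equation*}
    \|TV\|_{1,\infty} \leq \|T\|_{\infty}\|V\|_{1,\infty}
\end{equation*}
together with the assumed bound $|\varphi(B)| \lesssim \|B\|_{1,\infty}$ for $B \in \Lc_{1,\infty}(H)$; composing these gives $|\varphi(TV)| \lesssim \|V\|_{1,\infty}\|T\|_{\infty}$, so $T \mapsto \varphi(TV)$ is continuous on $\Bc(H)$ with $V$ fixed.

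Finally, for the vanishing on $\Kc(H)$, I would use exactly the fact stated immediately before the lemma: any continuous trace on $\Lc_{1,\infty}(H)$ vanishes on the product $\Kc(H)\cdot\Lc_{1,\infty}(H)$ (citing \cite[Corollary 5.7.7]{LSZ}). If $T \in \Kc(H)$, then $TV$ sits in $\Kc(H)\cdot\Lc_{1,\infty}(H)$ by definition, and therefore $\varphi(TV) = 0$.

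There is no substantive obstacle here; the only mild point is that the three ingredients (ideal property, quasi-norm submultiplicativity, and the cited vanishing theorem) must all be assembled in the correct order, but each is a standard fact about $\Lc_{1,\infty}(H)$ already recorded in the excerpt.
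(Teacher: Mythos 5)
Your proof is correct and follows exactly the route the paper indicates: the paper itself notes just before the lemma that it "follows immediately from the fact that any continuous normalised trace on $\Lc_{1,\infty}(H)$ vanishes on $\Kc(H)\cdot\Lc_{1,\infty}(H)$," and you have assembled the ideal property, the quasi-norm submultiplicativity, and that vanishing result in the expected way.
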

    
    {\hightlight To make our proof of Connes' trace theorem completely transparent, we return to the abstract setting of Theorem \ref{symbol def}. As with Theorem \ref{symbol def},
    the proof of the following lemma follows almost immediately from the assumptions.}
    \begin{lem}\label{tensor lemma}
        Let $\A_1,\A_2$ and $H$ be as in Theorem \ref{symbol def}. Suppose that $\omega$ is a continuous linear functional on $\Pi(\A_1,\A_2)$
        which vanishes $\Pi(\A_1,\A_2)\cap \Kc(H)$. Then there exists a unique linear functional $\rho$ on $\A_1\otimes_{\min} \A_2$ such that
        \begin{equation*}
            \omega(T) = \rho(\sym(T))
        \end{equation*}
        for all $T \in \Pi(\A_1,\A_2)$.
        
        If, in addition, we have $\psi_1 \in \A_1^*$ and $\psi_2 \in \A_2^*$, and
        \begin{equation*}
            \omega(\pi_1(a)\pi_2(b)) = \psi_1(a)\psi_2(b)
        \end{equation*}
        for all $a \in \A_1$ and $b \in \A_2$, then
        \begin{equation*}
            \omega(T) = (\psi_1\otimes\psi_2)(\sym(T))
        \end{equation*}
        or in other words, $\rho = \psi_1\otimes\psi_2$.
    \end{lem}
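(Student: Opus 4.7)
The plan is to exploit the key structural fact from the proof of Theorem \ref{symbol def}: the symbol map $\sym$ is precisely $\theta^{-1}\circ q$, where $q:\Bc(H)\to \Bc(H)/\Kc(H)$ is the Calkin quotient and $\theta:\A_1\otimes_{\min}\A_2\to q(\Pi(\A_1,\A_2))$ is an isometric $*$-isomorphism. In particular, $\sym$ is a surjective continuous $*$-homomorphism onto $\A_1\otimes_{\min}\A_2$ whose kernel is exactly $\Pi(\A_1,\A_2)\cap \Kc(H)$.

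Given this, I would define $\rho$ on the image $\sym(\Pi(\A_1,\A_2)) = \A_1\otimes_{\min}\A_2$ by the formula $\rho(\sym(T)) := \omega(T)$. To verify well-definedness, I would observe that if $\sym(T_1)=\sym(T_2)$ then $T_1-T_2 \in \ker(\sym) = \Pi(\A_1,\A_2)\cap \Kc(H)$, and $\omega$ vanishes on this set by hypothesis. Continuity of $\rho$ follows from the isometric identification $\Pi(\A_1,\A_2)/(\Pi(\A_1,\A_2)\cap\Kc(H)) \cong \A_1\otimes_{\min}\A_2$ (which is a restatement of the fact that $\theta$ is isometric): any continuous functional on the numerator vanishing on the denominator descends to a continuous functional on the quotient, with the same norm. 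Uniqueness of $\rho$ is immediate from the surjectivity of $\sym$.

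For the second assertion, I would compute on elementary products. Using the defining property of $\sym$, we have $\sym(\pi_1(a)\pi_2(b)) = \sym(\pi_1(a))\sym(\pi_2(b)) = (a\otimes 1)(1\otimes b) = a\otimes b$. Hence
\begin{equation*}
    \rho(a\otimes b) = \omega(\pi_1(a)\pi_2(b)) = \psi_1(a)\psi_2(b) = (\psi_1\otimes\psi_2)(a\otimes b).
\end{equation*}
So $\rho$ and $\psi_1\otimes\psi_2$ agree on the algebraic tensor product $\A_1\otimes\A_2$. Both functionals are continuous on $\A_1\otimes_{\min}\A_2$: $\rho$ by the first part of the argument, and $\psi_1\otimes\psi_2$ by Theorem \ref{linear functional tensor}. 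Since $\A_1\otimes\A_2$ is dense in $\A_1\otimes_{\min}\A_2$, the two functionals coincide.

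There is no real obstacle here; the work was already done in Theorem \ref{symbol def}, which produced the isometric identification of the Calkin image of $\Pi(\A_1,\A_2)$ with $\A_1\otimes_{\min}\A_2$. The lemma is essentially a universal property of the quotient, packaged in a form convenient for applications to traces via Lemma \ref{lifting lemma}.
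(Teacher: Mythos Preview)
Your proposal is correct and takes essentially the same approach as the paper: both arguments pass $\omega$ through the quotient by $\Pi(\A_1,\A_2)\cap\Kc(H)$ and use the isometric identification $\theta:\A_1\otimes_{\min}\A_2\to q(\Pi(\A_1,\A_2))$ from Theorem \ref{symbol def} to define $\rho$, then match $\rho$ with $\psi_1\otimes\psi_2$ on elementary tensors and invoke Theorem \ref{linear functional tensor} for continuity. Your presentation is slightly more direct in that you work with $\sym$ and its kernel rather than explicitly naming the intermediate quotient map, but the content is identical.
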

    \begin{proof}
        Since $\omega$ vanishes on $\Pi(\A_1,\A_2)\cap \Kc(H)$, $\omega$ descends to a linear functional $\tilde{\omega}$ on $\Pi(\A_1,\A_2)/(\Pi(\A_1,\A_2)\cap \Kc(H))$, which is simply $q(\Pi(\A_1,\A_2)$.
        Theorem \ref{symbol def} gives an isometric $*$-isomorphism $j: q(\Pi(\A_1,\A_2) \to \A_1\otimes_{\min} \A_2$. Defining $\rho = \tilde{\omega}\circ j^{-1}$
        gives the required linear functional.

        Now to prove that $\rho = \psi_1\otimes\psi_2$, first we note that it follows from Theorem \ref{linear functional tensor} that $\psi_1\otimes\psi_2$ is well defined
        on $\A_1\otimes_{\min} \A_2$. Since by assumption $\psi_1$ and $\psi_2$ are continuous,
        $\psi_1\otimes\psi_2$ is determined by its values on the algebraic tensor product $\A_1\otimes\A_2$. Hence, the linear functional $\psi_1\otimes\psi_2$ is uniquely characterised by
        \begin{equation*}
            (\psi_1\otimes\psi_2)(a\otimes b) = \psi_1(a)\psi_2(b)\,\quad a \in \A_1,\,b \in \A_2.
        \end{equation*}
        Since by assumption $\rho(a\otimes b) = \omega(\pi_1(a)\pi_2(b)) = \psi_1(a)\psi_2(b)$, it follows that $\rho = \psi_1\otimes\psi_2$.
    \end{proof}

    Lemmas \ref{lifting lemma} and \ref{tensor lemma} show that to establish \eqref{cif general} for the noncommutative torus
    it suffices to show that we have for all $a \in C(\Circ^d_\theta)$ and $g \in C(\Sb^{d-1})$,
    \begin{equation}\label{cif torus}
        \varphi(\pi_1(a)\pi_2(g)(1-\Delta)^{-d/2}) = c(\varphi,d)\tau_\theta(a)\int_{\Sb^{d-1}} g(t)\,dt,
    \end{equation}

    Recall that for $\SU(2)$, the algebra we denote $\A_2$ is generated by the operators $\{b_1,b_2,b_3\}$,
    and there is a map $\sym:\A_2\to C(\Sb^2)$. To establish \eqref{cif general}, we need to prove
    that for all $f \in C(\SU(2))$ and $g \in \A_2$,
    \begin{equation}\label{cif sphere}
        \varphi(\pi_1(f)\pi_2(g)(1-\Delta)^{-d/2}) = c(\varphi)\int_{\SU(2)}f(s)\,ds \int_{\Sb^{2}} \sym(g)(t)\,dt,
    \end{equation}

    The case for noncommutative Euclidean space is more subtle. Here, we fix $z \in W^{d,1}(\Rl^d_\theta)$,
    and consider the functional
    \begin{equation*}
        \omega(T) := \varphi(T\pi_1(z)(1-\Delta)^{-d/2})
    \end{equation*}
    and we must prove that for all $x \in C_0(\Rl^d_\theta)+\Cplx$ and $g \in C(\Sb^{d-1})$,
    \begin{equation}\label{cif plane}
        \omega(\pi_1(x)\pi_2(g)) = \tau_\theta(xz)\int_{\Sb^{d-1}} g(t)\,dt.
    \end{equation}
    From Lemma \ref{tensor lemma} it then follows
    that $\omega(T) = \left(\tau_\theta\otimes \int_{\Sb^{d-1}}\right)(\sym(T)(z\otimes 1))$.

    The remainder of this section contains the required argument for \eqref{cif torus}, \eqref{cif sphere} and \eqref{cif plane}.
    The statement and proof of Connes' Trace Formula for the commutative case $\Rl^d$ is given in \cite[Theorem 4]{Dao1}.

\subsection{Connes' Trace Formula on the Noncommutative Torus}
    In this subsection we prove \eqref{cif torus}. 

    \begin{lem}\label{first torus trace lemma} 
    If $y\in C(\Sb^{d-1}),$ then
    $$\varphi\left(\diag\left\{y\left(\frac{n}{|n|}\right)\cdot (1+|n|^2)^{-d/2}\right\}_{n \in \Itgr^d}\right)= \frac{1}{d}\int_{\Sb^{d-1}}y$$
    for every continuous normalised trace $\varphi$ on $\mathcal{L}_{1,\infty}.$ Here, $c_d = \frac{1}{d}$.
    \end{lem}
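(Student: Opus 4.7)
The plan is to establish the singular-value asymptotic $k\,\mu_k(T_y) \to \frac{1}{d}\int_{\Sb^{d-1}} y$ for positive $y$, where $T_y := \diag\{y(n/|n|)(1+|n|^2)^{-d/2}\}_{n\in\Itgr^d}$, and then to apply a general property of continuous traces on $\Lc_{1,\infty}$ to deduce the trace formula. By splitting $y$ into real/imaginary and positive/negative parts and using the linearity of $\varphi$, it suffices to treat $y \geq 0$.

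For the singular-value asymptotics, I would count lattice points. Let $N(\lambda) := \#\{n \in \Itgr^d : y(n/|n|)(1+|n|^2)^{-d/2} > \lambda\}$; this equals $\#\{k : \mu_k(T_y) > \lambda\}$. Comparing with the continuous volume
\begin{equation*}
V(\lambda) := \Vol\bigl(\{t \in \Rl^d : y(t/|t|)(1+|t|^2)^{-d/2} > \lambda\}\bigr) = \int_{\Sb^{d-1}}\int_0^{((y(\omega)/\lambda)^{2/d}-1)_+^{1/2}} r^{d-1}\,dr\,d\omega,
\end{equation*}
dominated convergence as $\lambda \to 0^+$ (using that the inner integral equals $\frac{1}{d}((y(\omega)/\lambda)^{2/d}-1)_+^{d/2}$, which is asymptotically $\frac{y(\omega)}{d\lambda}$) yields $V(\lambda) = \frac{1}{d\lambda}\int_{\Sb^{d-1}} y + o(1/\lambda)$. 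For smooth $y$, the level set is star-shaped about the origin with smooth boundary scaling as $\lambda^{-1/d}$, and a standard Gauss-type lattice counting estimate gives $N(\lambda) = V(\lambda) + O(\lambda^{-(d-1)/d})$, hence $k\mu_k(T_y) \to \frac{1}{d}\int y$. For merely continuous $y$, I extend by a squeeze argument: choose smooth $y^\pm$ with $y^- \leq y \leq y^+$ and $\int(y^+ - y^-) < \epsilon$; since $T_{y^-} \leq T_y \leq T_{y^+}$ as positive diagonal operators, we get $\mu_k(T_{y^-}) \leq \mu_k(T_y) \leq \mu_k(T_{y^+})$, which pins down the limit to within $\epsilon/d$.

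To pass from singular-value asymptotics to the value of $\varphi$, I use unitary invariance of traces to write $\varphi(T_y) = \varphi(\diag\{\mu_k(T_y)\}_{k\geq 0})$. Setting $c := \frac{1}{d}\int y$, a short argument shows the diagonal operator $\diag\{\mu_k(T_y) - c/(k+1)\}_{k\geq 0}$ satisfies $k\,\mu_k \to 0$ (indeed, if $k(a_k - b_k) \to 0$ then the decreasing rearrangement of $|a_k - b_k|$ satisfies the same bound after discarding a finite number of large entries), and so lies in $\Lc_{1,0}$, the closure of finite-rank operators in the $\|\cdot\|_{1,\infty}$-norm. Since any continuous trace on $\Lc_{1,\infty}$ vanishes on $\Lc_{1,0}$ (a consequence of vanishing on $\Kc(H)\cdot \Lc_{1,\infty}(H)$ as cited in the paper), and $\varphi(\diag\{1/(k+1)\}) = 1$ by normalisation, we conclude $\varphi(T_y) = c\cdot 1 = \frac{1}{d}\int y$.

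The main obstacle is the lattice counting step, since continuous $y$ need not yield a smooth level set; this is circumvented by the smooth approximation combined with the squeeze argument above. The rest of the argument is purely abstract once the asymptotics $k\mu_k(T_y) \to \frac{1}{d}\int y$ is in hand.
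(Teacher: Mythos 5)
Your proposal takes a genuinely different route from the paper's. The paper works directly with the partial sums $\sum_{|n|\le N}y(n/|n|)(1+|n|^2)^{-d/2}$: for Lipschitz $y$ it compares the sum term-by-term to an integral over unit cubes (the Lipschitz hypothesis controls the oscillation of $y(t/|t|)(1+|t|^2)^{-d/2}$ on each cube), obtains $\int_{\Sb^{d-1}}y\cdot\log N+O(1)$, extends to continuous $y$ by norm-continuity of both sides, and invokes [LSZ, Corollary 11.2.4]. You instead establish the singular-value asymptotic $k\mu_k(T_y)\to\frac1d\int_{\Sb^{d-1}}y$ by counting lattice points in the level sets, and conclude abstractly by showing $\diag\{\mu_k(T_y)-c/(k+1)\}$ lies in the separable ideal $\Lc_{1,0}$ on which every continuous trace vanishes. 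The abstract half of your argument is correct: the reduction to $y\ge0$, the use of permutation-unitary invariance, the distribution-function calculation showing that $k|a_k-b_k|\to 0$ forces the same decay for the decreasing rearrangement, and the inclusion $\Lc_{1,0}\subset\Kc(H)\cdot\Lc_{1,\infty}(H)$ are all sound, and constitute a legitimate alternative to the paper's appeal to [LSZ, Corollary 11.2.4].

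There is, however, a real gap in the lattice-counting half. You assert $N(\lambda)=V(\lambda)+O(\lambda^{-(d-1)/d})$ for ``smooth $y$,'' but the reasoning --- a star-shaped region with smooth boundary scaling uniformly like $\lambda^{-1/d}$ --- tacitly assumes $y$ is bounded away from zero. If $y$ has zeros, the boundary radius $R(\omega,\lambda)$, which equals $((y(\omega)/\lambda)^{2/d}-1)^{1/2}$ where $y(\omega)>\lambda$ and vanishes otherwise, collapses to the origin with a square-root cusp as $y(\omega)\downarrow\lambda$, and a ``standard Gauss-type'' bound does not apply as stated. This is exactly where the squeeze argument breaks: for the lower bound you need $0\le y^-\le y$ with $y^-$ smooth, and when $y$ has zeros so does every such $y^-$, so the smooth case you established does not actually cover $y^-$ (the upper bound with $y^+$ smooth and strictly positive is fine). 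The gap is fixable --- either carry out the lattice count for smooth nonnegative $y$ by peeling off the small angular set $\{y\le\varepsilon_0\}$, or prove the result for $y+\delta>0$ and remove $\delta$ using $\mu_{j+k}(A+B)\le\mu_j(A)+\mu_k(B)$ with $B=\delta T_1$ --- but as written the approximation step does not close. The paper's term-by-term Lipschitz comparison is more elementary precisely because it imposes no positivity or level-set regularity hypothesis on $y$.
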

    \begin{proof} 
    We establish the equivalent result that,
    \begin{equation*}
        \sum_{|n| \leq N} y\left(\frac{n}{|n|}\right)(1+|n|^2)^{-d/2} = \int_{\Sb^{d-1}} y(t)\,dt\cdot  \log(N) + O(1)
    \end{equation*}
    as $N\to \infty$. The equivalence of this assertion to the result is established in \cite[Corollary 11.2.4]{LSZ}.

    First we suppose that $y$ is Lipschitz. In this case, we have
    \begin{equation*}
        y\left(\frac{n}{|n|}\right)(1+|n|^2)^{-d/2}-\int_{n+[0,1]^d} y\left(\frac{t}{|t|}\right)(1+|t|^2)^{-d/2}\,dt = O((1+|n|^2)^{-\frac{d+1}{2}}).
    \end{equation*}
    So summing over all $n$ with $|n| \leq N$,
    \begin{equation}\label{integral approx}
        \sum_{|n|\leq N} y\left(\frac{n}{|n|}\right)(1+|n|^2)^{-d/2} -\sum_{|n| \leq N}\int_{n+[0,1]^d}y\left(\frac{t}{|t|}\right)(1+|t|^2)^{-d/2}\,dt = O(1).
    \end{equation}
    The left hand side can be estimated as,
    \begin{align}\label{spherical approx}
        \left|\int_{|t| \leq N} y\left(\frac{t}{|t|}\right)(1+|t|^2)^{-d/2}\,dt - \sum_{|n|\leq N} \int_{n+[0,1]^d} y\left(\frac{t}{|t|}\right)(1+|t|^2)^{-d/2}\,dt\right| &\leq c_d \|y\|_\infty N^{d-1} N^{-d}\\
                                                                                                                                                            &= O(1).
    \end{align}
    So by \eqref{integral approx} and \eqref{spherical approx},
    \begin{equation*}
        \sum_{|n|\leq N} y\left(\frac{n}{|n|}\right)(1+|n|^2)^{-d/2} = \int_{|t|\leq N} y\left(\frac{t}{|t|}\right)(1+|t|^2)^{-d/2}\,dt + O(1).
    \end{equation*}
    The integral on the right hand side can be computed by a polar decomposition:
    \begin{equation*}
        \int_{|t|\leq N} y\left(\frac{t}{|t|}\right)(1+|t|^2)^{-d/2}) = \int_{\Sb^{d-1}} y(t)\,dt \int_{0}^N \frac{r^{d-1}}{(1+r^2)^{d/2}}\,dr.
    \end{equation*}
    However as, 
    \begin{equation*}
        \int_0^N \frac{r^{d-1}}{(1+r^2)^{d/2}}\,dr = \log(N) + O(1)
    \end{equation*}
    Hence there is a constant $c_d$ such that for all Lipschitz $y \in C(\Sb^{d-1})$,
    \begin{equation*}
        \varphi\left(\diag\left\{y\left(\frac{n}{|n|}\right)\cdot (1+|n|^2)^{-d/2}\right\}_{n \in \Itgr^d}\right)=c_d\int_{\Sb^{d-1}}y.
    \end{equation*}
    The value of $c_d$ can be determined by putting $y = 1$. 

    To remove the assumption that $y$ is Lipschitz, we note that both
    \begin{equation*}
        y \mapsto \int_{\Sb^{d-1}} y(t)\,dt
    \end{equation*} 
    and
    \begin{equation*}
        y \mapsto \varphi\left(\diag\left\{y\left(\frac{n}{|n|}\right)\cdot (1+|n|^2)^{-d/2}\right\}_{n \in \Itgr^d}\right)
    \end{equation*}
    are continuous in the uniform norm on $C(\Sb^{d-1})$, with the second case following from our assumption that $\varphi$ is continuous.

    Hence, as we may approximate an arbitrary $y \in C(\Sb^{d-1})$ in the uniform norm by a sequence $\{y_k\}_{k=0}^\infty$ of Lipschitz functions, the result follows.
    \end{proof}

    \begin{lem}\label{second torus trace lemma} If $x\in L_{\infty}(\mathbb{T}^d_{\theta})$ and if $y\in C(\Sb^{d-1}),$ then
    $$\varphi(\pi_1(x)\pi_2(y)(1-\Delta)^{-\frac{d}{2}})=\frac{1}{d}\tau_{\theta}(x)\cdot\int_{\Sb^{d-1}}y$$
    for every continuous normalised trace $\varphi$ on $\mathcal{L}_{1,\infty}.$
    \end{lem}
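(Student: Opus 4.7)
The strategy is to reduce to Lemma \ref{first torus trace lemma} by averaging out the $\pi_1(x)$ factor against the translation action $z_t$ of Lemma \ref{nabla action lemma}. Set $V:=\pi_2(y)(1-\Delta)^{-d/2}$. Since $\nabla u_n = n\,u_n$ and $(-\Delta)u_n=|n|^2 u_n$ on the orthonormal basis $\{u_n\}_{n\in\Itgr^d}$ of $L_2(\Circ^d_\theta)$, the operator $V$ is diagonal in this basis with eigenvalues $y(n/|n|)(1+|n|^2)^{-d/2}$ (altering the $n=0$ entry is a finite-rank perturbation and is invisible to $\varphi$). Via the unitary identification $L_2(\Circ^d_\theta)\cong\ell_2(\Itgr^d)$ sending $u_n\mapsto e_n$, Lemma \ref{first torus trace lemma} yields
\[
\varphi(V)=\frac{1}{d}\int_{\Sb^{d-1}}y.
\]

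The heart of the proof is to show $\varphi(\pi_1(x)V)=\tau_\theta(x)\varphi(V)$, which I would first establish for $x\in C(\Circ^d_\theta)$. For each $t\in[-\pi,\pi]^d$ the unitary $U_t:=e^{i(t,\nabla)}$ commutes with $V$, since both are functions of $\nabla$. Applying the trace property of $\varphi$ twice --- once to the pair $(U_t\pi_1(x),\,VU_t^{-1})$, whose second factor lies in $\Lc_{1,\infty}$, and once to $(\pi_1(x),V)$ --- one obtains
\[
\varphi(z_t(\pi_1(x))V)=\varphi(U_t\pi_1(x)VU_t^{-1})=\varphi(\pi_1(x)V)
\]
for every $t$. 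By Lemma \ref{lifting lemma} the functional $T\mapsto\varphi(TV)$ is norm-continuous on $\Bc(L_2(\Circ^d_\theta))$; combined with the norm-valued Bochner integral of Lemma \ref{nabla action lemma}, this permits $\varphi$ to be pushed inside the integral, giving
\[
(2\pi)^d\varphi(\pi_1(x)V)=\varphi\!\left(\int_{[-\pi,\pi]^d}z_t(\pi_1(x))\,dt\cdot V\right)=(2\pi)^d\tau_\theta(x)\varphi(V),
\]
which is the claim for continuous $x$.

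To extend to arbitrary $x\in L_\infty(\Circ^d_\theta)$ I would invoke Kaplansky density to approximate $x$ strongly by a bounded net $\{x_\alpha\}\subset C(\Circ^d_\theta)$; compactness of $V$ upgrades strong convergence $\pi_1(x_\alpha)\to\pi_1(x)$ to operator-norm convergence $\pi_1(x_\alpha)V\to\pi_1(x)V$, and both $x\mapsto\tau_\theta(x)$ and $x\mapsto\varphi(\pi_1(x)V)$ are continuous in the requisite sense on norm-bounded nets. The one genuinely delicate point in the argument is the trace-invariance step: because $U_t\notin\Lc_{1,\infty}$, one must carefully rewrite $U_t\pi_1(x)VU_t^{-1}$ as a product in which each commutator generated by the trace property contains a factor from $\Lc_{1,\infty}$; once this bookkeeping is arranged, the remainder of the argument is essentially mechanical.
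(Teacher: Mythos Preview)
Your averaging argument for $x\in C(\Circ^d_\theta)$ is correct and takes a genuinely different route from the paper. The paper does not use the $\Circ^d$-action at all here; instead it invokes \cite[Corollary 11.2.4]{LSZ}, which asserts that for nonnegative $V\in\Lc_{1,\infty}$ with eigenbasis $\{e_n\}$ one has $\varphi(TV)=\varphi\big(\diag\{\langle e_n,TVe_n\rangle\}\big)$ for every $T\in\Bc(H)$. Taking $V=(1-\Delta)^{-d/2}$ and computing the diagonal entry $\langle u_n,\pi_1(x)\pi_2(y)(1-\Delta)^{-d/2}u_n\rangle=\tau_\theta(x)\,y(n/|n|)(1+|n|^2)^{-d/2}$ reduces the claim to Lemma~\ref{first torus trace lemma} in one line, and this computation works directly for all $x\in L_\infty(\Circ^d_\theta)$ with no approximation. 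Your approach has the merit of paralleling the invariance arguments used for $\SU(2)$ and $\Rl^d_\theta$ later in the paper, and avoids the external citation; the paper's approach is shorter and handles $L_\infty$ uniformly.

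Your extension from $C(\Circ^d_\theta)$ to $L_\infty(\Circ^d_\theta)$, however, has a real gap. You argue that compactness of $V$ upgrades strong convergence $\pi_1(x_\alpha)\to\pi_1(x)$ to operator-norm convergence $\pi_1(x_\alpha)V\to\pi_1(x)V$, and then assert that $\varphi$ is ``continuous in the requisite sense''. But $\varphi$ is continuous for the $\Lc_{1,\infty}$ quasi-norm, not the operator norm, and operator-norm convergence inside $\Lc_{1,\infty}$ does not imply $\Lc_{1,\infty}$-convergence. Concretely, with $V=\diag\big((n+1)^{-1}\big)$ and $T_k$ the projection onto $\mathrm{span}\{e_n:n\geq k\}$, one has $\|T_kV\|_\infty=(k+1)^{-1}\to 0$ while $\mu(j,T_kV)=(k+1+j)^{-1}$ gives $\|T_kV\|_{1,\infty}\to 1$, and indeed $\varphi(T_kV)=\varphi(V)$ for every continuous normalised trace. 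Thus the functional $T\mapsto\varphi(TV)$ is \emph{not} SOT-continuous on bounded sets, and Kaplansky density does not deliver the conclusion. The paper's diagonal computation bypasses this difficulty entirely because it never leaves $L_\infty$.
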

    \begin{proof} 
    We refer to \cite[Corollary 11.2.4]{LSZ}, which implies as a special case that if $V \in \Lc_{1,\infty}$ is nonnegative, with a
    sequence of eigenvectors $\{e_n\}_{n=0}^\infty$, and $T \in \Bc(H)$, then
    \begin{equation*}
        \varphi(TV) = \varphi\left(\diag\left\{\langle e_n,TVe_n\rangle \right\}_{n=0}^\infty \right)
    \end{equation*}
    We apply this result with $V = (1-\Delta)^{-d/2}$ and $T = \pi_1(x)\pi_2(y)$. Note that the unitary generators $\{u_n\}_{n\in \Itgr^d}$
    are eigenvectors for $(1-\Delta)^{-d/2}$. Now we obtain:
    \begin{equation*}
        \varphi(\pi_1(x)\pi_2(y)(1-\Delta)^{-\frac{d}{2}}) = \varphi\left(\diag\left\{\langle u_n,\pi_1(x)\pi_2(y)(1-\Delta)^{-\frac{d}{2}}u_n\rangle\right\}_{n\in\mathbb{Z}^d}\right).
    \end{equation*}
    Again by definition, $u_n,$ $n\in\mathbb{Z}^d,$ are eigenvectors for $\pi_2(y).$ Thus,
    \begin{align*}
        \langle u_n,\pi_1(x)\pi_2(y)(1-\Delta)^{-\frac{d}{2}}u_n\rangle &= \tau_\theta(u_n^*xy\left(\frac{n}{|n|}\right)(1+|n|^2)^{-d/2}u_n)\\
                                                                        &= \tau_{\theta}(x)\cdot y\left(\frac{n}{|n|}\right)\cdot (1+|n|^2)^{-\frac{d}{2}}.
    \end{align*}
    The assertion follows now from Lemma \ref{first torus trace lemma}.
    \end{proof}

    {\hightlight  By an application of Lemma \ref{tensor lemma}, we have the following theorem, which is our version of Connes' trace theorem for noncommutative tori.}
    \begin{thm}\label{ctt nc torus}
        If $T \in \Pi(C(\Circ^d_\theta),C(\Sb^{d-1}))$, then for any continuous normalised trace $\varphi$ on $\Lc_{1,\infty}$,
        \begin{equation*}
            \varphi(T(1-\Delta)^{-d/2}) = \frac{1}{d}\left(\tau_\theta\otimes\int_{\Sb^{d-1}}\right)(\sym(T)).
        \end{equation*}
    \end{thm}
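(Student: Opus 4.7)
The plan is to assemble this theorem from the pieces already built: the abstract ``descent through the symbol map'' lemma (Lemma \ref{tensor lemma}), the fact that trace-times-compact-resolvent is a continuous functional vanishing on compacts (Lemma \ref{lifting lemma}), and the explicit trace computation on elementary products $\pi_1(x)\pi_2(y)(1-\Delta)^{-d/2}$ (Lemma \ref{second torus trace lemma}). No further analytic work should be needed at this stage.

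First I would define the linear functional
\begin{equation*}
    \omega(T) := \varphi(T(1-\Delta)^{-d/2}), \quad T \in \Bc(L_2(\Circ^d_\theta)).
\end{equation*}
Since $(1-\Delta)^{-d/2} \in \Lc_{1,\infty}$, Lemma \ref{lifting lemma} asserts that $\omega$ is a continuous linear functional on $\Bc(L_2(\Circ^d_\theta))$ which vanishes on $\Kc(L_2(\Circ^d_\theta))$. In particular, its restriction to $\Pi(C(\Circ^d_\theta),C(\Sb^{d-1}))$ vanishes on $\Pi \cap \Kc$.

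Next, I would note that Theorem \ref{symbol def} applies with $\A_1 = C(\Circ^d_\theta)$ and $\A_2 = C(\Sb^{d-1})$: the unital/abelian condition is immediate, the commutator condition is verified by the same argument given in detail for $\Rl^d_\theta$ in Section 5, and injectivity of the product map is Lemma \ref{torus injectivity}. Hence the symbol map $\sym : \Pi(C(\Circ^d_\theta),C(\Sb^{d-1})) \to C(\Circ^d_\theta) \otimes_{\min} C(\Sb^{d-1})$ exists. I would then set
\begin{equation*}
    \psi_1 := \tau_\theta \in C(\Circ^d_\theta)^*, \qquad \psi_2 := \frac{1}{d}\int_{\Sb^{d-1}} \in C(\Sb^{d-1})^*.
\end{equation*}
Lemma \ref{second torus trace lemma} reads precisely as $\omega(\pi_1(x)\pi_2(y)) = \psi_1(x)\psi_2(y)$ for all $x \in C(\Circ^d_\theta)$, $y \in C(\Sb^{d-1})$.

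Finally I would invoke Lemma \ref{tensor lemma}, whose hypotheses are now all satisfied, to conclude
\begin{equation*}
    \omega(T) = (\psi_1 \otimes \psi_2)(\sym(T)) = \frac{1}{d}\bigl(\tau_\theta \otimes \textstyle\int_{\Sb^{d-1}}\bigr)(\sym(T))
\end{equation*}
for every $T \in \Pi(C(\Circ^d_\theta),C(\Sb^{d-1}))$. There is essentially no obstacle at this stage; all of the genuine content sits upstream, namely in the eigenvalue computation of Lemma \ref{first torus trace lemma} (a classical estimate converting the sum $\sum y(n/|n|)(1+|n|^2)^{-d/2}$ to a logarithmic integral), in the diagonal-trace reduction of Lemma \ref{second torus trace lemma} (using that the $u_n$ diagonalise both $\pi_2(y)$ and $(1-\Delta)^{-d/2}$), and in the abstract factorisation supplied by Lemmas \ref{lifting lemma} and \ref{tensor lemma}.
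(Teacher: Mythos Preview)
Your proposal is correct and follows exactly the same approach as the paper's proof: define $\omega(T) = \varphi(T(1-\Delta)^{-d/2})$, use Lemma \ref{lifting lemma} to see it vanishes on compacts, and then apply Lemma \ref{tensor lemma} with the elementary-product identity supplied by Lemma \ref{second torus trace lemma}. The paper's version is simply a terser statement of the same three-step reduction.
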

    \begin{proof}
        From Lemma \ref{lifting lemma}, the functional
        \begin{equation*}
            \omega(T) = \varphi(T(1-\Delta)^{-d/2})
        \end{equation*}
        vanishes on $\Pi(C(\Circ^d_\theta),C(\Sb^{d-1}))\cap \Kc(L_2(\Circ^d_\theta))$. Due to Lemma \ref{second torus trace lemma},
        we can apply Lemma \ref{tensor lemma} to obtain $\omega(T) = \frac{1}{d}\left(\tau_\theta\otimes \int_{\Sb^{d-1}}\right)(\sym(T))$. 
    \end{proof}

\subsection{Connes' Trace Formula on $\SU(2)$}
    Recall that in the setting of $\SU(2)$, $\A_2$ denotes the $C^*$-algebra generated by $\frac{D_k}{\sqrt{-\Delta}}$, for $k = 1,2,3$. 

    Appendix \ref{identification of sphere algebra} shows that $q(\A_2)$ is in fact isometrically isomorphic to $C(\Sb^2)$, as
    in Appendix \ref{identification of sphere algebra}, we let $u:C(\Sb^2)\to q(\A_2)$ be the isomorphism.
    We have $(1\otimes u^{-1})\circ \sym:\mathcal{A}_2\to 1\otimes C(\Sb^2).$ Making
    a slight abuse of notation, we consider
    $\sym(x) \in C(\Sb^2)$ when $x \in \A_2$.

    \begin{lem}\label{first sphere trace lemma} 
    For every continuous trace $\varphi$ on $\mathcal{L}_{1,\infty},$ we have
    $$\varphi(x(1-\Delta)^{-\frac32})=\frac{\varphi((1-\Delta)^{-3/2})}{\Vol(\Sb^2)}\int_{\Sb^2}\sym(x)(t)\,dt,\quad x\in\mathcal{A}_2.$$
    \end{lem}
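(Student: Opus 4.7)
The plan is to define the continuous linear functional $\omega(x) := \varphi(x(1-\Delta)^{-3/2})$ on $\mathcal{A}_2$ and reduce the problem to identifying a rotation-invariant Radon measure on $\Sb^2$.

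First, since $(1-\Delta)^{-3/2} \in \Lc_{1,\infty}$ commutes with every element of $\mathcal{A}_2$ (as $\Delta$ commutes with each $D_k$), the functional $\omega$ is well-defined and continuous on $\mathcal{A}_2$. Because continuous traces on $\Lc_{1,\infty}$ vanish on $\Kc\cdot\Lc_{1,\infty}$, Lemma \ref{lifting lemma} shows that $\omega$ vanishes on $\mathcal{A}_2\cap\Kc(L_2(\SU(2)))$, so $\omega$ descends to a continuous linear functional $\tilde\omega$ on $q(\mathcal{A}_2)\cong C(\Sb^2)$. By the Riesz representation theorem, there is a finite signed Radon measure $\mu$ on $\Sb^2$ with
$$\omega(x)=\int_{\Sb^2}\sym(x)(t)\,d\mu(t), \quad x \in \mathcal{A}_2.$$

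Next, I show $\mu$ is $\SO(3)$-invariant. For $g\in\SU(2)$, the identity $\lambda_l(g)\exp(it\sigma_k)\lambda_l(g)^{-1}=\exp(it\,\mathrm{Ad}(g)\sigma_k)$ differentiated at $t=0$ gives
$$\lambda_l(g)D_k\lambda_l(g)^*=\sum_{j=1}^3\mathrm{Ad}(g)_{kj}D_j,$$
and hence (using that $\lambda_l(g)$ commutes with $\Delta$) the same relation for each $b_k$. Thus conjugation by $\lambda_l(g)$ preserves $\mathcal{A}_2$, and under the identification $u:C(\Sb^2)\to q(\mathcal{A}_2)$ from the appendix sending the coordinate function $t_k$ to $b_k$, this conjugation corresponds on $C(\Sb^2)$ to pullback by the rotation $\mathrm{Ad}(g)^{-1}\in\SO(3)$. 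Since $\varphi$ is a trace and $(1-\Delta)^{-3/2}$ commutes with $\lambda_l(g)$, we have $\omega(\lambda_l(g)x\lambda_l(g)^*)=\omega(x)$, so $\mu$ is invariant under the image of $\mathrm{Ad}$. That image is all of $\SO(3)$, as $\mathrm{Ad}:\SU(2)\to\SO(3)$ is the usual double cover.

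Finally, the unique (up to scalar) $\SO(3)$-invariant Radon measure on $\Sb^2$ is the standard surface measure, so $d\mu=c\,dt$ for some scalar $c$. Taking $x=1$ (whose symbol is $1\in C(\Sb^2)$) yields $\varphi((1-\Delta)^{-3/2})=c\cdot\Vol(\Sb^2)$, which identifies $c=\varphi((1-\Delta)^{-3/2})/\Vol(\Sb^2)$ as required. The main obstacle is the middle step: justifying that conjugation by $\lambda_l(g)$ both preserves $\mathcal{A}_2$ and implements the standard rotation action of $\SO(3)$ on $C(\Sb^2)$ under the identification of the appendix; once this is in place, unitary invariance of the trace combined with uniqueness of invariant measure finishes the proof.
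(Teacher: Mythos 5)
Your proposal is correct and takes essentially the same route as the paper: both arguments descend $\omega(x)=\varphi(x(1-\Delta)^{-3/2})$ to a continuous linear functional on $q(\A_2)\cong C(\Sb^2)$ using that $\varphi$ vanishes on $\Kc\cdot\Lc_{1,\infty}$, then exploit unitary invariance of $\varphi$ together with the intertwining $\lambda_l(g)u(f)\lambda_l(g)^{-1}=u(f\circ\eta(g))$ (your $\mathrm{Ad}$ is the paper's $\eta$, established in Lemma \ref{eta transformation} of the appendix) to show the resulting functional is $\SO(3)$-invariant, and finally invoke uniqueness of the rotation-invariant measure on $\Sb^2$ and evaluate at $x=1$ to fix the constant. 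The only cosmetic difference is that you pass explicitly through Riesz representation to produce a signed measure, whereas the paper works directly with the functional $L$; the substance is identical.
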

    \begin{proof}
        Since $\varphi$ is a trace on $\Lc_{1,\infty}$, it is vanishes on finite rank operators \cite[Corollary 5.7.7]{LSZ}. 
        By continuity, it follows that $\varphi$ vanishes on $\Kc\cdot \Lc_{1,\infty}$. Hence, $\rho(x(1-\Delta)^{-3/2})$
        depends only on the class of $x$ modulo compact operators. 
        
        From Appendix \ref{identification of sphere algebra}, there is an isometric $*$-isomorphism $u:C(\Sb^2)\to q(\A_2)$.
        Let $f \in C(\Sb^2)$, and choose $x \in \A_2$ such that $u(f) = q(x)$. Define a linear functional:
        \begin{equation*}
            L(f) := \varphi(x(1-\Delta)^{-d/2}).
        \end{equation*}
        The above is independent of the choice of $x$, since if $x_1,x_2$ are such that $q(x_1) = q(x_2)$, then $\varphi(x_1(1-\Delta)^{-3/2}) = \varphi(x_2(1-\Delta)^{-d/2})$.
        Note we also have that $L$ is continuous, since $|L(f)| \leq |\varphi((1-\Delta)^{-3/2})|\|q(x)\| = |\varphi((1-\Delta)^{-d/2})|\|u(f)\|$, and $\|u(f)\| = \|f\|_\infty$.
        
        Since the trace $\varphi$ is unitarily invariant,
        \begin{equation*}
            \varphi(x(1-\Delta)^{-3/2}) = \varphi(\lambda_l(g)x(1-\Delta)^{-3/2}\lambda_l(g)^{-1})
        \end{equation*}
        and since $\Delta$ commutes with $\lambda_l(g)$,
        \begin{equation*}
            \varphi(x(1-\Delta)^{-3/2}) = \varphi(\lambda_l(g)x\lambda_l(g)^{-1}(1-\Delta)^{-3/2}).
        \end{equation*}
        
        We introduce the canonical surjective map,
        \begin{equation*}
            \eta:\SU(2)\to \mathrm{SO}(3).
        \end{equation*}
        Lemma \ref{eta transformation} shows that for all $f \in C(\Sb^2)$, we have $\lambda_l(g)u(f)\lambda_l(g)^{-1} = u(f\circ \eta(g))$.
        Thus, $L(f) = L(f\circ \eta(g))$. Since $\eta:\SU(2)\to \mathrm{SO}(3)$ is surjective,
        this means that $L \in C(\Sb^2)^*$ is invariant under all rotations. Hence, $L$
        is the rotation invariant measure on $\Sb^2$.

    %
    \end{proof}

    \begin{lem}\label{second sphere trace lemma} For every continuous trace $\varphi$ on $\mathcal{L}_{1,\infty},$,
    $$\varphi(\pi_1(f)x(1-\Delta)^{-\frac32})=\frac{\varphi((1-\Delta)^{-3/2})}{\Vol(\Sb^2)}\int_{{\rm SU}(2)}f(g)\,dg\cdot \int_{\Sb^2}\sym(x),\quad f\in C({\rm SU}(2)),\ x\in\mathcal{A}_2.$$
    \end{lem}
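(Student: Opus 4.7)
The plan is to mimic the strategy used in the proof of Lemma \ref{second torus trace lemma}, but using the right-regular representation of $\SU(2)$ in place of the translation action on the torus. The key structural fact is that $\lambda_r(g)$ commutes with each $D_k$ (since left and right regular representations of a group commute), and therefore with $\Delta$, with each $b_k = D_k/(-\Delta)^{1/2}$, and hence with every element of $\mathcal{A}_2$. In particular, for any $x\in\mathcal{A}_2$, the operator $x(1-\Delta)^{-3/2}$ commutes with $\lambda_r(g)$ for all $g\in \SU(2)$.

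Using this commutation and the unitary invariance of $\varphi$ (which follows from $\varphi$ being a trace), I would observe that for every $g\in \SU(2)$,
\begin{equation*}
\varphi(\pi_1(f)x(1-\Delta)^{-3/2})
= \varphi\bigl(\lambda_r(g)\pi_1(f)\lambda_r(g)^{-1}\cdot x(1-\Delta)^{-3/2}\bigr)
= \varphi(z_g(\pi_1(f))\,x(1-\Delta)^{-3/2}).
\end{equation*}
Integrating both sides over the normalized Haar measure (the left side is constant in $g$) yields
\begin{equation*}
\varphi(\pi_1(f)x(1-\Delta)^{-3/2}) = \int_{\SU(2)} \varphi(z_g(\pi_1(f))\,x(1-\Delta)^{-3/2})\,dg.
\end{equation*}

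Next I would interchange $\varphi$ and the integral. Since $x(1-\Delta)^{-3/2}\in \mathcal{L}_{1,\infty}$ and the map $g\mapsto z_g(\pi_1(f))$ is continuous in operator norm by Lemma \ref{sphere bochner}, the integrand is norm-continuous in $\mathcal{L}_{1,\infty}$, so the Bochner integral exists in $\mathcal{L}_{1,\infty}$ and $\varphi$-continuity allows the interchange. Applying Lemma \ref{sphere bochner} to evaluate $\int_{\SU(2)}z_g(\pi_1(f))\,dg = \int_{\SU(2)}f(g)\,dg\cdot 1$, I obtain
\begin{equation*}
\varphi(\pi_1(f)x(1-\Delta)^{-3/2}) = \int_{\SU(2)}f(g)\,dg \cdot \varphi(x(1-\Delta)^{-3/2}).
\end{equation*}
The proof then concludes by inserting Lemma \ref{first sphere trace lemma} for the second factor.

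No step presents a genuine difficulty given the groundwork already laid: the only point worth checking carefully is the interchange of $\varphi$ with the Bochner integral, which is routine because the relevant map is norm-continuous into $\mathcal{L}_{1,\infty}$ and $\varphi$ is continuous on that space. The conceptual content is entirely captured by the commutation $[\lambda_r(g),\mathcal{A}_2]=[\lambda_r(g),\Delta]=0$ together with the averaging identity of Lemma \ref{sphere bochner}, which is the exact analogue of the torus argument with the Haar measure replacing the translation action $z_t$.
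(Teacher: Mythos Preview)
Your proposal is correct and follows essentially the same approach as the paper: both arguments exploit that $\lambda_r(g)$ commutes with $x(1-\Delta)^{-3/2}$ and conjugates $\pi_1(f)$ to $\pi_1(\lambda_r(g)f)$, combined with unitary invariance of $\varphi$, and then finish by invoking Lemma~\ref{first sphere trace lemma}. The only cosmetic difference is that the paper phrases the resulting right-invariance of the functional $f\mapsto\varphi(\pi_1(f)x(1-\Delta)^{-3/2})$ abstractly and appeals to uniqueness of Haar measure, whereas you carry out the averaging explicitly via the Bochner integral of Lemma~\ref{sphere bochner}; both routes are equivalent.
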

    \begin{proof} Fix $x \in \A_2$. The mapping
    $$L:f\to \varphi(\pi_1(f)x(1-\Delta)^{-\frac32}),\quad f\in C({\rm SU}(2)),$$
    is a bounded linear functional on $C({\rm SU}(2)).$

    Let $g\in{\rm SU}(2).$ We have
    $$\lambda_r(g)\pi_1(f)\lambda_r(g)^{-1}=\pi_1(\lambda_r(g)f)$$,  and 
    since left actions commute with right actions, and $x$ is a function of the generators
    of the left action, we have $\lambda_r(g)x=x\lambda_r(g).$
    Thus,
    \begin{align*}
        L(f) &= \varphi(\lambda_r(g)\pi_1(f)x(1-\Delta)^{-\frac32}\lambda_r(g)^{-1})\\
            &= \varphi(\lambda_r(g)\pi_1(f)\lambda_r(g)^{-1}x(1-\Delta)^{-\frac32})\\
            &= \varphi(\pi_1(\lambda_r(g)f)x(1-\Delta)^{-\frac32})\\
            &= L(\lambda_r(g)f).
    \end{align*}

    By the uniqueness theorem for Haar measures, it follows that the only $\lambda_r-$invariant bounded linear functional on $C({\rm SU}(2))$ is an integral with respect to the Haar measure (up to a constant factor).
    Thus,
    $$\varphi(\pi_1(f)x(1-\Delta)^{-\frac32})=\int_{{\rm SU}(2)}f(g)\,dg\cdot F(x).$$
    Substituting $f=1,$ we obtain
    $$\varphi(\pi_1(f)x(1-\Delta)^{-\frac32})=\int_{{\rm SU(2)}}f(g)\,dg\cdot \varphi(x(1-\Delta)^{-\frac32}).$$
    The assertion follows now from Lemma \ref{first sphere trace lemma}.
    \end{proof}

    So by an identical argument to Theorem \ref{ctt nc torus},{\hightlight  which is our version of Connes' trace theorem for $\SU(2)$.}
    \begin{thm}\label{ctt sphere}
        For every continuous trace $\varphi$ on $\Lc_{1,\infty}$, and for all $T \in \Pi(C(\SU(2)),\A_2)$,
        \begin{equation*}
            \varphi(T(1-\Delta)^{-3/2}) = \frac{\varphi((1-\Delta)^{-3/2})}{\Vol(\Sb^2)}\left(\int_{\SU(2)}\otimes \int_{\Sb^2}\right)(\sym(T)).
        \end{equation*}
    \end{thm}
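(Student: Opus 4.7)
The plan is to mirror exactly the argument used for Theorem \ref{ctt nc torus}, substituting the role of Lemma \ref{second torus trace lemma} by Lemma \ref{second sphere trace lemma} and the role of $\tau_\theta$ by integration against the Haar measure on $\SU(2)$. The three ingredients are already in place: Lemma \ref{lifting lemma} turns $\varphi(\,\cdot\,(1-\Delta)^{-3/2})$ into a functional that kills compacts, Lemma \ref{tensor lemma} factors any such functional through $\sym$ as a tensor product, and Lemma \ref{second sphere trace lemma} identifies the relevant tensor factors explicitly.

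First I would introduce
\begin{equation*}
    \omega(T) := \varphi(T(1-\Delta)^{-3/2}), \quad T \in \Pi(C(\SU(2)),\A_2).
\end{equation*}
Since $(1-\Delta)^{-3/2} \in \Lc_{1,\infty}(L_2(\SU(2)))$, Lemma \ref{lifting lemma} shows that $\omega$ is a continuous linear functional on $\Bc(L_2(\SU(2)))$ which vanishes on $\Kc(L_2(\SU(2)))$, and in particular on $\Pi(C(\SU(2)),\A_2)\cap \Kc(L_2(\SU(2)))$. Therefore the hypothesis of Lemma \ref{tensor lemma} is met, so there exists a unique continuous linear functional $\rho$ on $C(\SU(2)) \otimes_{\min} q(\A_2)$ such that $\omega(T) = \rho(\sym(T))$.

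To identify $\rho$, I would compute $\omega$ on elementary tensors. For $f \in C(\SU(2))$ and $x \in \A_2$, Lemma \ref{second sphere trace lemma} gives
\begin{equation*}
    \omega(\pi_1(f)\pi_2(x)) = \frac{\varphi((1-\Delta)^{-3/2})}{\Vol(\Sb^2)} \int_{\SU(2)} f(g)\,dg \cdot \int_{\Sb^2} \sym(x).
\end{equation*}
Setting $\psi_1(f) = \int_{\SU(2)} f(g)\,dg$ (continuous on $C(\SU(2))$) and $\psi_2(x) = \frac{\varphi((1-\Delta)^{-3/2})}{\Vol(\Sb^2)} \int_{\Sb^2} \sym(x)$ (continuous on $q(\A_2)$ via the isomorphism $u^{-1}:q(\A_2)\to C(\Sb^2)$ from Appendix \ref{identification of sphere algebra}), the second assertion of Lemma \ref{tensor lemma} yields $\rho = \psi_1 \otimes \psi_2$, and hence the claimed formula.

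The only slightly delicate point, which I would address explicitly, is the passage from $\A_2$ to $q(\A_2) \cong C(\Sb^2)$: both $\psi_2$ and $\sym$ are only defined on the Calkin quotient, but $\omega$ already vanishes on compacts by Lemma \ref{lifting lemma}, so there is no ambiguity and the identifications are compatible with the abuse of notation made just before Lemma \ref{first sphere trace lemma}. I expect no further obstacle beyond this bookkeeping, since the analytic content has been absorbed into Lemma \ref{second sphere trace lemma}.
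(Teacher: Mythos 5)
Your proposal matches the paper's proof exactly: the paper states only that Theorem \ref{ctt sphere} follows ``by an identical argument to Theorem \ref{ctt nc torus},'' which is precisely the plan you carry out, invoking Lemmas \ref{lifting lemma}, \ref{second sphere trace lemma} and \ref{tensor lemma} in the same sequence. Your closing remark about passing from $\A_2$ to $q(\A_2)\cong C(\Sb^2)$ correctly spells out a point the paper leaves implicit (cf.\ the remark after Theorem \ref{symbol def}).
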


\subsection{Connes' Trace formula on noncommutative Euclidean space}

    The following assertion is proved in \cite{SZ_cif}, {\hightlight  see also the related result \cite[Proposition 4.17]{Gayral-Gracia-Bondia-Iochum-Schucker-Varily-moyal-planes-2004}.}
    \begin{thm}\label{cif ncplane} If $x\in W^{d,1}(\mathbb{R}^d_{\theta}),$ then $x(1-\Delta)^{-\frac{d}{2}}\in\mathcal{L}_{1,\infty}$ and
    there is a constant $C(d,\theta) > 0$ such that
    $$\varphi(x(1-\Delta)^{-\frac{d}{2}})=C(d,\theta)\tau_{\theta}(x)$$
    for every normalised continuous trace on $\mathcal{L}_{1,\infty}.$
    \end{thm}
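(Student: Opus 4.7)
The plan is to identify the functional $x \mapsto \varphi(\pi_1(x)(1-\Delta)^{-d/2})$ with a multiple of $\tau_\theta$ by a symmetry and uniqueness argument, after quickly disposing of Schatten-class membership. The inclusion $\pi_1(x)(1-\Delta)^{-d/2}\in\Lc_{1,\infty}$ with estimate $\|\pi_1(x)(1-\Delta)^{-d/2}\|_{1,\infty}\leq C_d\|x\|_{W^{d,1}}$ is precisely Corollary \ref{ncp specific cwikel}, so the linear functional
\[
L(x) := \varphi\bigl(\pi_1(x)(1-\Delta)^{-d/2}\bigr), \qquad x\in W^{d,1}(\Rl^d_\theta),
\]
is well defined and continuous in the $W^{d,1}$-norm, and it remains to identify $L$ with a scalar multiple of $\tau_\theta$.

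The key observation is that $L$ is invariant under the dual translation action $T_s$ on $L_\infty(\Rl^d_\theta)$ defined on generators by $T_s(U(t)):=e^{i(s,t)}U(t)$. A direct computation from the formula for $U(t)$ in Definition \ref{nc plane definition} shows that $T_s$ is implemented in the concrete representation $\pi_1$ by conjugation with the unitary multiplication operator $V_s\xi(u):=e^{i(s,u)}\xi(u)$ on $L_2(\Rl^d)$, i.e.\ $V_s\pi_1(x)V_s^{-1} = \pi_1(T_s(x))$. Crucially, $(1-\Delta)^{-d/2}$ acts in this representation as multiplication by the scalar function $(1+|u|^2)^{-d/2}$, and therefore commutes with every $V_s$; the trace property of $\varphi$ then gives
\[
L(T_s(x)) = \varphi\bigl(V_s\pi_1(x)(1-\Delta)^{-d/2}V_s^{-1}\bigr) = \varphi\bigl(\pi_1(x)(1-\Delta)^{-d/2}\bigr) = L(x).
\]
Since $T_s$ commutes with each $\partial_k$ on generators and preserves $\tau_\theta$, it is an isometry of $W^{d,1}(\Rl^d_\theta)$, so $L$ is a bounded $T_s$-invariant functional on $W^{d,1}(\Rl^d_\theta)$.

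The final step is uniqueness: every bounded $T_s$-invariant functional on $W^{d,1}(\Rl^d_\theta)$ must be a scalar multiple of $\tau_\theta$. Under the isomorphism $L_\infty(\Rl^d_\theta)\cong\Bc(L_2(\Rl^{d/2}))$ recalled in Definition \ref{nc plane definition}, $\tau_\theta$ becomes a positive multiple of the standard operator trace, and $T_s$ becomes conjugation by the Schr\"odinger representation of the Heisenberg group on $L_2(\Rl^{d/2})$, which is irreducible and square-integrable. A Plancherel-type averaging over the Weyl system, applied to the embedding $W^{d,1}(\Rl^d_\theta)\hookrightarrow L_1(\Rl^d_\theta)\cong\Lc_1(L_2(\Rl^{d/2}))$, delivers the desired uniqueness and yields $L = C(d,\theta)\tau_\theta$; positivity of $C(d,\theta)$ follows from positivity of normalised continuous traces on $\Lc_{1,\infty}$ by choosing a nonnegative $x$ with $\tau_\theta(x)>0$. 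The principal obstacle is this last uniqueness step: because $\Rl^d$ is non-compact, a direct average of $T_s$ does not converge, and one must instead exploit square-integrability of matrix coefficients of the Heisenberg representation to obtain a resolution of identity and complete the argument --- this is presumably the main technical content of \cite{SZ_cif}.
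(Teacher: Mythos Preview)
The paper does not prove this theorem itself but cites it from \cite{SZ_cif}; your sketch is nonetheless correct and is essentially the argument used there, and it coincides with what the paper later does in Lemma~\ref{ncplane intermediate lemma} for the more general functional $x\mapsto\varphi(\pi_1(x)\pi_2(b)(1-\Delta)^{-d/2})$. In particular, since $\nabla$ acts as multiplication by $u$ in this representation, your unitary $V_s$ is exactly the paper's $e^{i\langle s,\nabla\rangle}$; under the reparametrisation $s=\theta t$ (valid because $\theta$ is nondegenerate) your action $T_s$ becomes precisely $x\mapsto U(-t)xU(t)$, and the uniqueness statement you need is then literally Lemma~\ref{main invariance lemma}, which the paper also imports from \cite{SZ_cif} and which you rightly identify as the main technical input.

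One small correction: normalised continuous traces on $\Lc_{1,\infty}$ are not all positive, so your positivity argument for $C(d,\theta)$ should instead fix a single positive normalised trace (e.g.\ a Dixmier trace) and use the trace identity $\varphi(\pi_1(x)(1-\Delta)^{-d/2})=\varphi\bigl((1-\Delta)^{-d/4}\pi_1(x)(1-\Delta)^{-d/4}\bigr)\geq 0$ for $x\geq 0$; strict positivity still requires knowing that this operator does not lie in the kernel of all such traces, which is part of the explicit computation in \cite{SZ_cif}.
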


    We also need a pair of important intermediate results from \cite{SZ_cif}. Firstly,
    \begin{lem}\label{main invariance lemma} If $F$ is a continuous functional on $W^{d,1}(\mathbb{R}^d_{\theta})$ such that
    $$F(x)=F(U(-t)xU(t)),\quad x\in W^{d,1}(\mathbb{R}^d_{\theta}),\quad t\in\mathbb{R}^d,$$
    then $F=\tau_{\theta}$ (up to a constant factor).
    \end{lem}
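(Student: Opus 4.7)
The plan is to exploit the identity $\alpha_t(U(s))=e^{-i(t,\theta s)}U(s)$ for the conjugation action $\alpha_t(x):=U(-t)xU(t)$, which is immediate from the CCR $U(t)U(s)=e^{i(t,\theta s)/2}U(t+s)$ and the antisymmetry of $\theta$. Because $\theta$ is nondegenerate, these characters separate the values of $s$, singling out $s=0$ as the unique ``fixed generator''; this heuristically identifies $\tau_\theta$ as the projection onto the $s=0$ spectral component of the $\Rl^d$-action. First I would verify that for each $x\in W^{d,1}(\Rl^d_\theta)$ the orbit map $t\mapsto\alpha_t(x)$ is norm-continuous into $W^{d,1}(\Rl^d_\theta)$; this reduces to the identity $[D_k,U(t)]=t_kU(t)$, from which one sees that $\alpha_t$ commutes with each $\partial_k$ and acts isometrically on each $L_p(\Rl^d_\theta)$. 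Given this, for any $\phi\in L_1(\Rl^d)$ the Bochner integral
\[
A_\phi(x):=\int_{\Rl^d}\phi(t)\,\alpha_t(x)\,dt
\]
is defined in $W^{d,1}(\Rl^d_\theta)$, and the invariance hypothesis together with Fubini gives $F(A_\phi(x))=\bigl(\int_{\Rl^d}\phi(t)\,dt\bigr)F(x)$, with the same identity holding for $\tau_\theta$ in place of $F$. Directly on generators, $A_\phi(U(s))=\widehat{\phi}(\theta s)\,U(s)$.

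Next, I would set $c:=F(x_0)/\tau_\theta(x_0)$ for a fixed $x_0\in W^{d,1}(\Rl^d_\theta)$ with $\tau_\theta(x_0)\neq 0$ and let $G:=F-c\tau_\theta$. Then $G$ is continuous, $\alpha$-invariant, and vanishes at $x_0$. Choosing a sequence $\phi_n\in L_1(\Rl^d)$ with $\int\phi_n=1$ but $\widehat{\phi}_n(\xi)\to 0$ for every $\xi\neq 0$ (e.g.\ rescaled bumps), the identity $A_{\phi_n}(U(s))=\widehat{\phi}_n(\theta s)U(s)$ together with a Weyl-type expansion $x=\int\sigma_x(s)U(s)\,ds$, valid on a dense Schwartz-class subspace of $W^{d,1}(\Rl^d_\theta)$, shows that $A_{\phi_n}(x)$ concentrates onto its $s=0$ spectral component, which is $\tau_\theta(x)$ times a reference element. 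Combining this with the invariance identity $G(A_{\phi_n}(x))=G(x)$ forces $G\equiv 0$ on the smooth subspace, and then continuity of $F$ and $\tau_\theta$ on $W^{d,1}(\Rl^d_\theta)$ yields $F=c\tau_\theta$.

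The main obstacle is that $1\in L_\infty(\Rl^d_\theta)$ does not belong to $W^{d,1}(\Rl^d_\theta)$, so the naive limit of the averages $A_{\phi_n}(x)$ would lie outside the space in question. This is precisely why the reduction to $G=F-c\tau_\theta$ is essential: the concentration argument only needs to show that \emph{any} continuous $\alpha$-invariant functional vanishing at one element of nonzero trace must vanish identically. The most delicate technical step is therefore to justify the Weyl/Fourier concentration of $A_{\phi_n}(x)$ rigorously inside $W^{d,1}(\Rl^d_\theta)$ and to extend the conclusion from the dense Schwartz-class subspace to all of $W^{d,1}(\Rl^d_\theta)$, using continuity of $F$ together with uniform $W^{d,1}$-bounds on $A_{\phi_n}$.
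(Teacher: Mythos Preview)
The paper does not actually supply a proof of this lemma: it is imported verbatim from \cite{SZ_cif}, so there is no ``paper's own proof'' to compare against. What follows is therefore an assessment of your outline on its own merits.

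Your overall strategy---average over the $\alpha$-action with spreading kernels $\phi_n$, use $F(A_{\phi_n}x)=F(x)$, and pass to the limit---is natural, and your preliminary observations (that $\alpha_t$ commutes with each $\partial_k$ and is isometric on $W^{d,1}$, that $A_{\phi_n}$ is uniformly bounded on $W^{d,1}$ when $\|\phi_n\|_{L_1}$ is fixed, and that $A_{\phi_n}U(s)=\widehat{\phi}_n(\theta s)U(s)$) are correct and useful.

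The genuine gap is the ``concentration'' step. You write that $A_{\phi_n}(x)$ concentrates onto the $s=0$ component, which would be $\tau_\theta(x)$ times a reference element, and then observe that no such reference element lives in $W^{d,1}$. Your proposed remedy---pass to $G=F-c\tau_\theta$---only helps if you can show that $A_{\phi_n}(z)\to 0$ \emph{in the $W^{d,1}$ norm} for $z$ in a dense class with $\tau_\theta(z)=0$; that is the statement your argument actually needs, and it is not established. Pointwise (in $s$) convergence of the modified symbol $\widehat{\phi}_n(\theta s)\sigma_z(s)$ to $0$ does not by itself imply $W^{d,1}$ (or even $L_1$) convergence of the corresponding operator: under the identification $L_1(\Rl^d_\theta)\cong\mathcal{L}_1(L_2(\Rl^{d/2}))$, the trace norm is not controlled by any simple symbol norm, and the dominated-convergence heuristic breaks down. (For contrast, the same heuristic would ``prove'' $A_{\phi_n}(x)\to 0$ in $L_2$ for \emph{every} $x$, which is in fact true---but only because $\tau_\theta$ is not $L_2$-continuous; in $L_1$ the situation is genuinely different, as one sees already by noting that $\|A_{\phi_n}(p)\|_{L_1}=1$ for any rank-one projection $p$ and all $n$.) What you really need is an ergodic-type statement: that for trace-zero $z$ the phase-space averages $A_{\phi_n}(z)$ tend to $0$ in trace norm. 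This is plausible but requires a separate argument, and is the heart of the matter. Until that step is supplied, the outline does not close.
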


    Let $M_d(\Rl)$ be the space of $d\times d$ real matrices. We define
    \begin{equation*}
        {\rm Sp}(\theta,d) := \Big\{g\in M_d(\mathbb{R}):\ g^*\theta g=\theta\Big\}.
    \end{equation*}
    As we are working under the assumption that $\det(\theta) \neq 0$, it follows that $\mathrm{Sp}(\theta,d)$
    is a group under usual matrix multiplication.

    By our assumption that $\det(\theta) \neq 0$, it follows that if $g \in \Sp(\theta,d)$ then $|\det(g)| = 1$.

    The second result from \cite{SZ_cif} we require is
    \begin{lem}\label{SL_d invariance of the trace}
        Let $g \in \Sp(\theta,d)$. We define an action $g\mapsto W_g$ on $L_2(\Rl^d)$ by
        \begin{equation*}
            (W_g\xi) = \xi\circ g^{-1}.
        \end{equation*}
        The operator $W_g$ is unitary on $L_2(\Rl^d)$, and conjugation by $W_g$ defines a trace-preserving group of automorphisms of $L_\infty(\Rl^d_\theta)$.
    \end{lem}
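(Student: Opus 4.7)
The plan is threefold: first verify that $W_g$ is a well-defined unitary operator on $L_2(\Rl^d)$, then compute its action on the generators of $L_\infty(\Rl^d_\theta)$, and finally deduce trace preservation from the structure of $L_\infty(\Rl^d_\theta)$ as a type I factor.

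For the first step, take determinants in the defining relation $g^*\theta g = \theta$ to obtain $\det(g)^2\det(\theta) = \det(\theta)$. Since $\det(\theta) \neq 0$ by our standing nondegeneracy assumption, this forces $|\det(g)| = 1$. The standard change of variables formula then shows $\xi \mapsto \xi\circ g^{-1}$ is an $L_2$-isometry, and since $W_{g^{-1}}$ supplies a two-sided inverse, $W_g$ is unitary. A direct computation also confirms that $g \mapsto W_g$ is a group homomorphism: $W_{g_1 g_2} = W_{g_1}W_{g_2}$, with adjoint $W_g^* = W_{g^{-1}}$ so that $(W_g^*\xi)(u) = \xi(gu)$.

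Second, I would show that $\mathrm{Ad}(W_g)$ is an automorphism of $L_\infty(\Rl^d_\theta)$ by computing its action on the generators $\{U(t)\}_{t \in \Rl^d}$. Direct calculation gives
\begin{equation*}
(W_g U(t) W_g^*\xi)(u) = e^{-\frac{i}{2}(t,\theta g^{-1}u)}\,\xi(u - gt).
\end{equation*}
Rearranging $g^*\theta g = \theta$ yields $\theta g^{-1} = g^*\theta$, and hence $(t,\theta g^{-1}u) = (gt,\theta u)$. Comparing with the definition of $U$ gives the key identity
\begin{equation*}
W_g U(t) W_g^* = U(gt),\qquad t \in \Rl^d.
\end{equation*}
Since $g$ is invertible this permutes the generating family $\{U(t)\}_{t \in \Rl^d}$, and so $\mathrm{Ad}(W_g)$ restricts to an automorphism of the von Neumann algebra $L_\infty(\Rl^d_\theta)$ they generate.

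For trace preservation I would invoke the isomorphism $L_\infty(\Rl^d_\theta) \cong \Bc(L_2(\Rl^{d/2}))$ recorded in Definition \ref{nc plane definition}. Every $*$-automorphism of $\Bc(H)$ on a separable Hilbert space is inner, so the restriction of $\mathrm{Ad}(W_g)$ to $L_\infty(\Rl^d_\theta)$ agrees with conjugation by some unitary $V \in L_\infty(\Rl^d_\theta)$. The trace $\tau_\theta$ is automatically invariant under such inner conjugation, since $\tau_\theta(VxV^*) = \tau_\theta(V^*Vx) = \tau_\theta(x)$ by the trace property (applied first to elements of the trace ideal and then extended). The main obstacle is the computation $W_g U(t) W_g^* = U(gt)$ in the second step, which genuinely requires the full symplectic condition $g^*\theta g = \theta$; the subsequent trace-invariance then falls out automatically from the type I structure of $L_\infty(\Rl^d_\theta)$.
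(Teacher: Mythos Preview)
The paper does not actually prove this lemma in the text; it is imported from \cite{SZ_cif}. Your argument is correct and self-contained: the unitarity check via $|\det(g)|=1$, the key computation $W_gU(t)W_g^*=U(gt)$ (which is precisely where the condition $g^*\theta g=\theta$ enters), and the trace-preservation step via innerness of $*$-automorphisms of the type~I$_\infty$ factor $\Bc(L_2(\Rl^{d/2}))$ all go through. The last step could alternatively be phrased as uniqueness (up to scale) of the normal semifinite trace on a type~I$_\infty$ factor, but your inner-automorphism route pins the scaling constant to $1$ most directly.
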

    Note that the assumption that $g \in \Sp(\theta,d)$ in Lemma \ref{SL_d invariance of the trace} is crucial: otherwise
    we do not necessarily have that $W_gxW_g^* \in L_\infty(\Rl^d_\theta)$ when $x \in L_\infty(\Rl^d_\theta)$.
    Let $\Omega$ be the antisymmetric matrix $\Omega := \begin{pmatrix} 0 & -1\\1 & 0 \end{pmatrix}^{\oplus d/2}$.
    Then $\Sp(\Omega,d)$ is the usual symplectic group.

    Let $g \in \mathrm{GL}(d,\Rl)$. Referring to Appendix A, consider the operator $V_g$ on $C(\Sb^{d-1})$ defined by
    \begin{equation*}
        (V_gf)(t) = \frac{1}{|gt|^d}f\left(\frac{gt}{|gt|}\right).
    \end{equation*}
    It can be easily verified that $g \mapsto V_g$ is an ``opposite group action" in the sense that it satisfies the rule $V_{gh} = V_h\circ V_g$ for all $g,h \in \mathrm{GL}(d,\Rl)$. 
    Lemma \ref{invariance lemma} proves that the rotation-invariant integration functional $m$ on $C(\Sb^{d-1})$ transforms
    under $V_g$ by $m\circ V_g = (\det(g))^{-1}m$.

    \begin{lem}\label{so teta} Let $l\in C(\Sb^{d-1})^*.$ If $l\circ V_g=l$ for every $g\in {\rm Sp}(\theta,d),$ then $l= \alpha m$ for 
        some $\alpha \in \Cplx$.
    \end{lem}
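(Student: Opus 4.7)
The plan is to lift $l$ from the sphere $\Sb^{d-1}$ to a continuous linear functional on $C_c(\Rl^d\setminus\{0\})$ via a radial averaging map, and then to use the uniqueness (up to a scalar) of the $\Sp(\theta,d)$-invariant Radon measure on $\Rl^d\setminus\{0\}$. Fix a function $\chi \in C_c((0,\infty))$ with $\int_0^\infty \chi(r)\,\frac{dr}{r} = 1$, and define the radial averaging
$$\pi : C_c(\Rl^d\setminus\{0\}) \to C(\Sb^{d-1}),\qquad (\pi F)(t) = \int_0^\infty F(rt)\, r^{d-1}\,dr.$$
A direct change of variables in polar coordinates shows that $\pi$ intertwines the natural action $(U_g F)(x) := F(g^{-1}x)$ with $V_{g^{-1}}$: one has $\pi \circ U_g = V_{g^{-1}} \circ \pi$ for every $g \in \mathrm{GL}(d,\Rl)$. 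Moreover, the map $(\iota f)(x) := \chi(|x|)|x|^{-d} f(x/|x|)$ sends $C(\Sb^{d-1})$ continuously into $C_c(\Rl^d\setminus\{0\})$, satisfies $\pi\circ\iota = \mathrm{id}$, and $\int_{\Rl^d} \iota f\,dx = m(f)$.

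Set $L(F) := l(\pi F)$ for $F \in C_c(\Rl^d\setminus\{0\})$. Using that $\Sp(\theta,d)$ is a group, the hypothesis $l\circ V_g = l$ for every $g \in \Sp(\theta,d)$ becomes $L \circ U_g = L$. By the Riesz representation theorem, $L$ is realised by a complex Radon measure $\nu$ on $\Rl^d\setminus\{0\}$ that is invariant under pushforward by every $g \in \Sp(\theta,d)$. Two standard facts now enter: first, $\det g = 1$ for every $g \in \Sp(\theta,d)$ (because $\theta$ is non-degenerate and antisymmetric), so Lebesgue measure $dx$ is itself $\Sp(\theta,d)$-invariant; second, $\Sp(\theta,d)$ acts transitively on $\Rl^d\setminus\{0\}$ (any nonzero vector can be extended to a $\theta$-symplectic basis by a Darboux-type construction, and two such bases are related by a symplectic transformation). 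Granting that $\nu = \alpha\, dx$ for some $\alpha \in \Cplx$, we obtain $l(f) = L(\iota f) = \alpha \int_{\Rl^d} \iota f\,dx = \alpha\, m(f)$ for every $f \in C(\Sb^{d-1})$, which is the desired conclusion.

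The main technical obstacle is the uniqueness step: showing that every invariant \emph{complex} Radon measure is a scalar multiple of $dx$. The classical uniqueness theorem for invariant measures on a transitive homogeneous space is usually stated only for \emph{positive} measures, so some additional work is required. The plan is to write $\nu = \nu_1 + i\nu_2$ with $\nu_j$ real, and then to apply the Jordan decomposition $\nu_j = \nu_j^+ - \nu_j^-$. Because pushforward by a homeomorphism preserves positivity and mutual singularity, uniqueness of the Jordan decomposition implies that each of the four positive parts $\nu_j^\pm$ is itself $\Sp(\theta,d)$-invariant, hence a nonnegative multiple of $dx$. Two nonzero multiples of $dx$ cannot be mutually singular, so for each $j$ at least one of $\nu_j^\pm$ must vanish; consequently $\nu$ is a complex multiple of $dx$, and the proof is complete.
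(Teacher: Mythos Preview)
Your proof is correct and takes a genuinely different route from the paper's.

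The paper argues as follows: using the existence of $\beta$ with $\beta^*\theta\beta=\Omega$, it conjugates the problem into one about the standard symplectic group ${\rm Sp}(d,\mathbb{R})={\rm Sp}(\Omega,d)$ and invokes Theorem~\ref{main symplectic thm}. That theorem, proved in Appendix~A, is established by a lengthy direct computation: one differentiates the action $V_{e^{sA}}$ to obtain a Lie-algebra representation on spherical polynomials, derives recursion relations for $l(b_{\mathbf n})$ from carefully chosen $A\in\mathfrak{sp}(d,\mathbb{R})$, and concludes that $l$ is supported on the union of coordinate equators, which is then shown to be impossible unless $l=0$ (after subtracting a multiple of $m$).

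Your approach bypasses all of this. The key observation is that your radial averaging $\pi:C_c(\mathbb{R}^d\setminus\{0\})\to C(\mathbb{S}^{d-1})$ intertwines the linear $\mathrm{GL}(d,\mathbb{R})$ action with $V_{g^{-1}}$, so the lifted functional $L=l\circ\pi$ is ${\rm Sp}(\theta,d)$-invariant. Since ${\rm Sp}(\theta,d)$ acts transitively on $\mathbb{R}^d\setminus\{0\}$ (Darboux), Weil's uniqueness theorem for invariant Radon measures on homogeneous spaces forces any positive invariant measure to be a multiple of Lebesgue; the Jordan-decomposition argument then handles the complex case. The right inverse $\iota$ brings the conclusion back down to the sphere.

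What each approach buys: the paper's proof is entirely self-contained and elementary, using nothing beyond calculus on the sphere and linear independence of trigonometric monomials, at the cost of several technical lemmas. Your argument is much shorter and more conceptual, but imports a non-trivial piece of abstract harmonic analysis (Weil's theorem) and some care with signed Radon measures on non-compact spaces. Two minor remarks: (i) in your last paragraph, you do not actually need mutual singularity---once $\nu_j^\pm=a_j^\pm\,dx$ you already have $\nu_j=(a_j^+-a_j^-)\,dx$; the singularity argument is correct but superfluous. (ii) Avoid the symbol $\pi$ for your averaging map, since in this paper it already denotes the representations $\pi_1,\pi_2$ and the Lie-algebra representation in Appendix~A.
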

    \begin{proof} 

    The following result of linear algebra is well known, and follows easily from \cite[Section 9.44]{shilov}. 
    There exists a real invertible matrix $\beta$ with $\beta\beta^* = \beta^*\beta = |\det(\theta)|^{-1}$ such that
    \begin{equation}\label{almost diagonalisation}
        \beta^* \theta \beta = \Omega.
    \end{equation}
    Hence, if $g \in \Sp(\Omega,d)$ is arbitrary, then:
    \begin{align*}
        (\beta g\beta^{-1})^*\theta (\beta g \beta^{-1}) &= (\beta^*)^{-1}g^*\beta^*\theta \beta g \beta^{-1}\\
                                                        &= (\beta^*)^{-1} \Omega \beta^{-1}\\
                                                        &= \theta,
    \end{align*}
    so $\beta g \beta^{-1} \in \Sp(\theta,d)$.
    Since by assumption, $l\circ V_h = l$ for all $h \in \Sp(\theta,d)$, we have:
    \begin{equation*}
        l\circ V_\beta^{-1}\circ V_g\circ V_{\beta} = l.
    \end{equation*}
    Therefore for arbitrary $g \in \Sp(\theta,d)$,
    \begin{equation*}
        (l\circ V_{\beta^{-1}})\circ V_g = l\circ V_{\beta^{-1}},\quad \text{ for all }g \in \Sp(\Omega,d).
    \end{equation*}

    So by Theorem \ref{main symplectic thm}, there is a constant $C$ such that $l\circ V_{\beta^{-1}} = Cm$.
    Hence $l = Cm\circ V_{\beta}$. By Lemma \ref{invariance lemma}, $m\circ V_{\beta} = \det(\beta)^{-1}m$. Let $\alpha = C\det(\beta)^{-1}$,
    so that $l = \alpha m$.
    \end{proof}

    \begin{lem}\label{ncplane intermediate lemma}
        Let $\varphi$ be a continuous trace on $\Lc_{1,\infty}$.
        There is a continuous functional $l \in C(\Sb^{d-1})^*$ such that for all $x \in W^{d,1}(\Rl^d_\theta)$
        and all $b \in C(\Sb^{d-1})$ we have
        \begin{equation*}
                \varphi(\pi_1(x)\pi_2(b)(1-\Delta)^{-\frac{d}{2}})=\tau_{\theta}(x)\cdot l(b).
        \end{equation*}
    \end{lem}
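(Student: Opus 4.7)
The plan is to fix $b \in C(\Sb^{d-1})$ and apply Lemma \ref{main invariance lemma} to the functional
$F_b(x) := \varphi(\pi_1(x)\pi_2(b)(1-\Delta)^{-d/2})$ on $W^{d,1}(\Rl^d_\theta)$. Since $\pi_2(b)$ commutes with $(1-\Delta)^{-d/2}$ (both are multiplication operators on $L_2(\Rl^d)$ in this representation), Corollary \ref{ncp specific cwikel} and $\|\pi_2(b)\| \leq \|b\|_\infty$ yield $|F_b(x)| \lesssim \|b\|_\infty\|x\|_{W^{d,1}}$, so $F_b$ is a continuous linear functional on $W^{d,1}(\Rl^d_\theta)$. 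If we show that $F_b(U(-s)xU(s)) = F_b(x)$ for all $s \in \Rl^d$, Lemma \ref{main invariance lemma} will produce a scalar $l(b)$ with $F_b = l(b)\tau_\theta$. Note that conjugation by $U(s)$ preserves $W^{d,1}(\Rl^d_\theta)$: a short computation based on $[D_k, U(\pm s)] = \pm s_k U(\pm s)$ gives $\partial_k(U(-s)xU(s)) = U(-s)(\partial_k x)U(s)$, so the $W^{d,1}$-norm is invariant.

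Using cyclicity of $\varphi$ applied to the bounded unitary $U(-s)$ and the $\Lc_{1,\infty}$-operator $\pi_1(x)U(s)\pi_2(b)(1-\Delta)^{-d/2}$ (membership in $\Lc_{1,\infty}$ follows from Theorem \ref{ncp cwikel} after rewriting the multiplication factor), we obtain
$F_b(U(-s)xU(s)) = \varphi(\pi_1(x)\cdot U(s)\pi_2(b)U(-s)\cdot U(s)(1-\Delta)^{-d/2}U(-s))$.
From the defining action $(U(s)\xi)(u) = e^{-i(s,\theta u)/2}\xi(u-s)$ and the antisymmetry $(s,\theta s) = 0$, a direct computation verifies $U(s)M_fU(-s) = M_{f(\cdot - s)}$ for any multiplication operator $M_f$ on $L_2(\Rl^d)$, with the $\theta$-phase cancelling. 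Writing $h(u) := b(u/|u|)(1+|u|^2)^{-d/2}$ and $h_s(u) := h(u-s)$, the invariance thus reduces to showing $\varphi(\pi_1(x)(h_s - h)(\nabla)) = 0$.

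For Lipschitz $b$, a Taylor bound on $(1+|u-s|^2)^{-d/2} - (1+|u|^2)^{-d/2}$ together with the Lipschitz estimate $|b((u-s)/|u-s|) - b(u/|u|)| \lesssim |s|/|u|$ for $|u| \geq 2|s|$ gives $|h_s(u) - h(u)| \lesssim (1+|u|)^{-d-1}$, so $h_s - h \in \ell_1(L_\infty)(\Rl^d)$. Lemma \ref{ncp trace class cwikel} then places $\pi_1(x)(h_s - h)(\nabla)$ in $\Lc_1$. A standard polar decomposition combined with the factorization $\mu_n = (n\mu_n)\cdot(1/n)$ of the singular values of a positive trace-class operator (using that $n\mu_n \to 0$) shows $\Lc_1 \subset \Kc(H)\cdot\Lc_{1,\infty}(H)$, on which every continuous trace vanishes. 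Hence $F_b = l(b)\tau_\theta$ for Lipschitz $b$ with $l(b) := F_b(x_0)/\tau_\theta(x_0)$ for any fixed $x_0 \in W^{d,1}(\Rl^d_\theta)$ satisfying $\tau_\theta(x_0) \neq 0$; this definition gives $|l(b)| \lesssim \|b\|_\infty$. Since both sides of $F_b(x) = l(b)\tau_\theta(x)$ depend continuously on $b$ in $\|\cdot\|_\infty$ (for fixed $x$) and Lipschitz functions are uniformly dense in $C(\Sb^{d-1})$, the identity and the continuous extension of $l$ to a functional $l \in C(\Sb^{d-1})^*$ follow at once. The main obstacle is establishing $h_s - h \in \ell_1(L_\infty)(\Rl^d)$: for arbitrary continuous $b$ the modulus of continuity may be too weak to sum against $(1+|n|^2)^{-d/2}$, which is precisely why the Lipschitz case is treated first and the density argument handles the rest.
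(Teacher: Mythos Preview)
Your argument is correct but takes a genuinely different route from the paper. The paper exploits a structural identity: the inner automorphism $x\mapsto U(-t)xU(t)$ of $L_\infty(\Rl^d_\theta)$ is implemented spatially not only by $U(t)$ but also by the pure multiplication operator $e^{i\langle\theta t,\nabla\rangle}$. Since $\pi_2(b)$ and $(1-\Delta)^{-d/2}$ are themselves multiplication operators on $L_2(\Rl^d)$, they commute with $e^{i\langle\theta t,\nabla\rangle}$, and unitary invariance of $\varphi$ gives $F_b(U(-t)xU(t))=F_b(x)$ \emph{exactly}, with no error term, for every $b\in C(\Sb^{d-1})$ at once. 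Your approach instead cycles $U(-s)$ around directly; because $U(s)$ translates multiplication operators, the factor $\pi_2(b)(1-\Delta)^{-d/2}=h(\nabla)$ becomes $h_s(\nabla)$ and you must kill the discrepancy $\pi_1(x)(h_s-h)(\nabla)$ via the $\Lc_1$-Cwikel estimate. This forces the Lipschitz hypothesis on $b$ and the closing density step. Both proofs ultimately feed into Lemma~\ref{main invariance lemma}; the paper's version is shorter and avoids any regularity assumption on $b$, while yours is more elementary in that it does not require spotting the second spatial implementation of the automorphism, at the cost of the extra $\ell_1(L_\infty)$ estimate and approximation argument.
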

    \begin{proof}
    Since $\varphi$ is unitarily invariant, it follows that
    $$\varphi(\pi_1(x)\pi_2(b)(1-\Delta)^{-\frac{d}{2}})=\varphi(e^{i\langle\theta t,\nabla\rangle}\pi_1(x)\pi_2(b)(1-\Delta)^{-\frac{d}{2}}e^{-i\langle\theta t,\nabla\rangle})$$
    However, $\nabla$ commutes with $\Delta$ and with $\pi_2(b).$ Thus,
    $$\varphi(\pi_1(x)\pi_2(b)(1-\Delta)^{-\frac{d}{2}})=\varphi(e^{i\langle\theta t,\nabla\rangle}\pi_1(x)e^{-i\langle\theta t,\nabla\rangle}\pi_2(b)(1-\Delta)^{-\frac{d}{2}}).$$
    
    Note that if $\xi \in L_2(\Rl^d)$,
    \begin{align*}
        e^{i\langle\theta t,\nabla\rangle}U(s)e^{-i\langle\theta t,\nabla\rangle}\xi(r) &= e^{i\langle \theta t,\nabla\rangle}U(s)e^{-i(\theta t,r)}\xi(r)\\
                                                                                        &= e^{i\langle \theta t,\nabla\rangle}e^{\frac{i}{2}(s,\theta r)-i(\theta t,r-s)}\xi(r-s)\\
                                                                                        &= e^{i(\theta t,r)+\frac{i}{2}(s,\theta r)-i(\theta t,r)+i(\theta t,s)}\xi(r-s)\\
                                                                                        &= e^{i(\theta t,s)}(U(s)\xi)(r)
    \end{align*}
    On the other hand from \eqref{canonical commutation relations},
    $$U(-t)U(s)U(t)=e^{i\langle \theta t,s\rangle}U(s).$$
    Since the family $\{U(t)\}_{t \in \Rl^d}$ generates $L_\infty(\Rl^d_\theta)$, it follows that
    for all $x \in L_\infty(\Rl^d_\theta)$ we have:
    $$e^{i\langle\theta t,\nabla\rangle}xe^{-i\langle\theta t,\nabla\rangle}=U(-t)xU(t).$$
    Since $\pi_1$ is actually the identity function, this is equivalent to
    \begin{equation*}
        e^{i\langle \theta t,\nabla\rangle}\pi_1(x)e^{-i\langle \theta t,\nabla\rangle} = \pi_1(U(-t)xU(t)).
    \end{equation*}
    
    Hence,
    \begin{equation*}
        \varphi(\pi_1(x)\pi_2(b)(1-\Delta)^{-\frac{d}{2}}) = \varphi(\pi_1(U(-t)xU(t))\pi_2(b)(1-\Delta)^{-d/2}).
    \end{equation*}    
    
    Consider now the linear functional on $W^{d,1}(\Rl^d_\theta)$,
    \begin{equation*}
        F(x) = \varphi(\pi_1(x)\pi_2(b)(1-\Delta)^{-d/2})
    \end{equation*}
    From Corollary \ref{ncp specific cwikel}, $F$ is continuous in the $W^{d,1}$-norm. We
    have proved that $F(U(-t)xU(t)) = F(x)$, and so from Lemma \ref{main invariance lemma}
    we can conclude that $F(x)$ is a scalar multiple of $\tau_\theta(x)$. 
    So,
    \begin{equation}\label{Cross-term}
        \varphi(\pi_1(x)\pi_2(b)(1-\Delta)^{-\frac{d}{2}})=\tau_{\theta}(x)\cdot l(b),
    \end{equation}
    for some functional $l$ on $C(\Sb^{d-1})$. Since $\varphi$ is continuous,
    \begin{equation*}  
        |l(b)| \leq C\|b\|_{\infty}
    \end{equation*} 
    for some $C \geq 0$. So $l$ is continuous.
    \end{proof}

    \begin{lem}\label{ncplane summary lemma} 
    Let $x \in W^{d,1}(\Rl^d_\theta)$ and $b \in C(\Sb^{d-1})$, then for any continuous normalised
    trace $\varphi$ on $\Lc_{1,\infty}$.
    $$\varphi(\pi_1(x)\pi_2(b)(1-\Delta)^{-\frac{d}{2}})=\frac{C(d,\theta)}{\Vol(\Sb^{d-1})}\tau_\theta(x)\int_{\Sb^{d-1}} b(t)\,dt.$$
    where $C(d,\theta)$ is the same constant as in Theorem \ref{cif ncplane}.
    \end{lem}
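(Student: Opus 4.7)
The plan is to apply Lemma \ref{ncplane intermediate lemma} to reduce the problem to identifying the unknown continuous functional $l \in C(\Sb^{d-1})^*$ in the formula
$$\varphi(\pi_1(x)\pi_2(b)(1-\Delta)^{-d/2}) = \tau_\theta(x)\cdot l(b),$$
then to exploit symplectic invariance together with Lemma \ref{so teta} to conclude $l$ is proportional to the rotation-invariant integral $m$ on $\Sb^{d-1}$, and finally to compute the constant of proportionality using Theorem \ref{cif ncplane}.

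The central step is to establish $l \circ V_g = l$ for every $g \in \Sp(\theta,d)$. Fix such a $g$ and use the unitary $W_g$ from Lemma \ref{SL_d invariance of the trace}. By unitary invariance of $\varphi$ we have
$$\varphi(\pi_1(x)\pi_2(b)(1-\Delta)^{-d/2}) = \varphi\bigl(W_g\pi_1(x)\pi_2(b)(1-\Delta)^{-d/2}W_g^*\bigr).$$
Writing $\alpha_g(x) := W_g x W_g^*$, we have $\pi_1(\alpha_g(x)) = W_g \pi_1(x) W_g^*$ with $\tau_\theta(\alpha_g(x)) = \tau_\theta(x)$. A direct computation (using that $W_g D_k W_g^* = \sum_j (g^{-1})_{kj} D_j$) shows that $W_g \pi_2(b)(1-\Delta)^{-d/2} W_g^*$ is the Fourier multiplier by $t\mapsto b(g^{-1}t/|g^{-1}t|)(1+|g^{-1}t|^2)^{-d/2}$.

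The heart of the argument is to compare this multiplier with $\pi_2(V_{g^{-1}}b)(1-\Delta)^{-d/2}$, which is Fourier multiplication by $\tfrac{|t|^d}{|g^{-1}t|^d} b(g^{-1}t/|g^{-1}t|)(1+|t|^2)^{-d/2}$. Both symbols agree to leading order as $|t|\to\infty$ (each behaves like $|g^{-1}t|^{-d}b(g^{-1}t/|g^{-1}t|)$), and an expansion shows their difference is bounded and $O(|t|^{-d-2})$, hence lies in $\ell_1(L_\infty)(\Rl^d)$. By Lemma \ref{ncp trace class cwikel}, the product of $\pi_1(\alpha_g(x))$ with this error multiplier is in $\Lc_1$, which is annihilated by $\varphi$ since $\Lc_1 \subset \Kc\cdot\Lc_{1,\infty}$. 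Here one uses that $\alpha_g$ maps $W^{d,1}(\Rl^d_\theta)$ to itself, which follows from the chain-type identity $\alpha_g(\partial_k x) = \sum_j (g^{-1})_{kj}\partial_j(\alpha_g(x))$ and the trace-preserving property of $\alpha_g$.

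Collecting terms, $\tau_\theta(x) l(b) = \tau_\theta(x) l(V_{g^{-1}}b)$ for every $x \in W^{d,1}(\Rl^d_\theta)$ and every $g \in \Sp(\theta,d)$. Choosing $x$ with $\tau_\theta(x) \neq 0$ yields $l \circ V_{g^{-1}} = l$, and since $\Sp(\theta,d)$ is a group we obtain invariance under all $V_g$. Lemma \ref{so teta} then gives $l = \alpha m$ for some $\alpha \in \Cplx$. To pin down $\alpha$, set $b = 1$ and apply Theorem \ref{cif ncplane}: $C(d,\theta)\tau_\theta(x) = \varphi(\pi_1(x)(1-\Delta)^{-d/2}) = \tau_\theta(x)\cdot \alpha\Vol(\Sb^{d-1})$, whence $\alpha = C(d,\theta)/\Vol(\Sb^{d-1})$. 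The main obstacle is the error estimation in the Fourier multiplier comparison, where one must keep careful track of the asymptotic behaviour of the symbol difference to place it in a sufficiently small ideal; the verification that $\alpha_g$ preserves $W^{d,1}$ is a secondary technical point.
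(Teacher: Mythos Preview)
Your proposal is correct and follows essentially the same route as the paper: reduce to identifying $l$ via Lemma \ref{ncplane intermediate lemma}, establish $\Sp(\theta,d)$-invariance of $l$ by conjugating with $W_g$ and showing the resulting error multiplier lies in $\ell_1(L_\infty)(\Rl^d)$ (hence the corresponding operator in $\Lc_1$ by Lemma \ref{ncp trace class cwikel}), apply Lemma \ref{so teta}, and fix the constant with Theorem \ref{cif ncplane}. The only cosmetic difference is that you conjugate by $W_g(\cdot)W_g^*$ rather than $W_g^*(\cdot)W_g$, producing $V_{g^{-1}}$ in place of $V_g$; you correctly note this is harmless since $\Sp(\theta,d)$ is a group, and you make explicit the fact (used implicitly in the paper) that $\alpha_g$ preserves $W^{d,1}(\Rl^d_\theta)$.
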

    \begin{proof} 
    Let $l$ be the linear functional from Lemma \ref{ncplane intermediate lemma}. It is required
    to show that we have:
    \begin{equation*}
        l(b) = \frac{C(d,\theta)}{\Vol(\Sb^{d-1})}\int_{\Sb^{d-1}} b(t)\,dt.
    \end{equation*}
    From Lemma \ref{so teta}, it suffices to show that $l\circ V_g = l$ for all $g \in \Sp(d,\theta)$, and we will
    be able to recover the constant by substituting $b=1$.
    
    Now let $g \in \Sp(\theta,d)$. Since the operator $W_g$ from Lemma \ref{SL_d invariance of the trace}
    is unitary, it follows that:
    \begin{align}
        \tau_\theta(x)l(b) &= \varphi(W_g^*\pi_1(x)\pi_2(b)(1-\Delta)^{-d/2}W_g)\\
                        &= \varphi(\pi_1(W_g^* x W_g)W_g^* \pi_2(b) (1-\Delta)^{-d/2}W_g).\label{tau l formula}
    \end{align}
    
    We now show that for all $y \in W^{d,1}(\Rl^d_\theta)$,
    \begin{equation}\label{Fourier side symplectic action}
        \pi_1(y)W_g^*\pi_2(b)(1-\Delta)^{-d/2}W_g - \pi_1(y)\pi_2(V_g b)(1-\Delta)^{-d/2} \in \Lc_1.
    \end{equation}
    
        Let $\xi \in L_2(\Rl^d)$, then 
        \begin{align*}
            (W_g^*\pi_2(b)(1-\Delta)^{-d/2} W_g\xi)(t) &= W_g^*(b\left(\frac{t}{|t|}\right)(1+|t|^2)^{-d/2}\xi(g^{-1}t))\\
                                                        &= b\left(\frac{gt}{|gt|}\right)(1+|gt|^2)^{-d/2}\xi(t)\\
                                                        &= b\left(\frac{gt}{|gt|}\right)\frac{|t|^d}{|gt|^d}\frac{|gt|^d}{|t|^d}(1+|gt|^2)^{-d/2}\xi(t).
        \end{align*}
        The above computation shows that:
        \begin{equation*}
            W_g^*\pi_2(b)(1-\Delta)^{-d/2} W_g = \pi_2(V_gb)\frac{|g\nabla|^d}{|\nabla|^d}(1+|g\nabla|^2)^{-d/2}.
        \end{equation*}
        Hence,
        \begin{align*}
        \pi_1(y)W_g^*\pi_2(b)&(1-\Delta)^{-d/2}W_g-\pi_1(y)\pi_2(V_g b)(1-\Delta)^{-d/2} \\
                            &= \pi_1(y)\pi_2(V_g(b))\left(\frac{|g\nabla|^d}{|\nabla|^d}(1+|g\nabla|^2)^{-d/2}-(1-\Delta)^{-d/2}\right)\\
                            &= \pi_1(y)\left(\frac{|g\nabla|^d}{|\nabla|^d}(1+|g\nabla|^2)^{-d/2}-(1-\Delta)^{-d/2}\right)\pi_2(V_g(b)).
        \end{align*}
        Due to Lemma \ref{ncp trace class cwikel}, to prove \eqref{Fourier side symplectic action}, it suffices to show that:
        \begin{equation*}
            h(t) := \frac{|gt|^d}{|t|^d}(1+|gt|^2)^{-d/2}-(1+|t|^2)^{-d/2}.
        \end{equation*}
        is in $\ell_1(L_\infty(\Rl^d))$. It is clear that $h$ is bounded in the ball $\{|t|\leq 1\}$.
        Supposing $|t| > 1$, we rewrite $h$ as,
        \begin{equation*}
            h(t) = |t|^{-d}\left(\frac{|gt|^d}{(1+|gt|^2)^{d/2}}-\frac{|t|^d}{(1+|t|^2)^{d/2}}\right)
        \end{equation*} 
        Since $\frac{|gt|^2}{1+|gt|^2}$ and $\frac{|t|^2}{1+|t|^2}$ are bounded above by $1$, we may use the numerical inequality:
        \begin{equation*}
            |\alpha^{d/2}-\beta^{d/2}| \leq \frac{d}{2}|\alpha-\beta|,\quad |\alpha|,|\beta| \leq 1
        \end{equation*}
        to obtain,
        \begin{equation*}
            |h(t)| \leq \frac{d}{2}|t|^{-d}\left|\frac{|gt|^2}{1+|gt|^2}-\frac{|t|^2}{1+|t|^2}\right|
        \end{equation*}
        However,
        \begin{align*}
            \frac{|gt|^2}{1+|gt|^2}-\frac{|t|^2}{1+|t|^2} &= (1+|t|^2)^{-1}\frac{|gt|^2-|t|^2}{1+|gt|^2}\\
                                                        &= O((1+|t|^2)^{-1}), \quad |t|\to\infty.
        \end{align*}
        Hence, $|h(t)| = O(|t|^{-d-2})$ as $|t|\to\infty$. From there it is easy to see that $h \in \ell_1(L_\infty)(\Rl^d)$.
        This completes the proof of \eqref{Fourier side symplectic action}.
    
        As $\varphi$ vanishes on $\Lc_1$, we may use \eqref{Fourier side symplectic action} with $y = W_g^* x W_g$
        to obtain in \eqref{tau l formula} to obtain,
        \begin{equation*}
            \tau_\theta(x)l(b) = \varphi(\pi_1(W_g^*xW_g)\pi_2(V_gb)(1-\Delta)^{-d/2})
        \end{equation*} 
        so by Lemma \ref{ncplane intermediate lemma}:
        \begin{equation*}
            \tau_\theta(x)l(b) = \tau_\theta(W_g^*xW_g)l(V_g b).
        \end{equation*} 
        From Lemma \ref{SL_d invariance of the trace} we have $\tau_\theta(W_g^*xW_g) =\tau_\theta(x)$,
        so now
        \begin{equation*}
            \tau_\theta(x)l(b) = \tau_\theta(x)l(V_g b).
        \end{equation*}
        Since $x \in W^{d,1}(\Rl^d_\theta)$ is arbitrary, it follows that $l(b) = l(V_g b)$. 
        So from Lemma \ref{so teta}, $l(b) = \alpha\int_{\Sb^{d-1}} b(t)\,dt$ for some constant $\alpha$. 
        By substituting $b = 1$ and using Theorem \ref{cif ncplane}, we recover the constant $\alpha$.
    %
    %
    \end{proof}

    {\hightlight  Finally, we have our version of Connes' trace theorem for $\Rl^d_\theta$:}
    \begin{thm}\label{ctt nc plane}
    Let $z \in W^{d,1}(\Rl^d_\theta)$. Then for every continuous
    normalised trace $\varphi$ on $\Lc_{1,\infty}$, and every $T \in \Pi(C_0(\Rl^d_\theta)+\Cplx,C(\Sb^{d-1}))$,
    \begin{equation*}
        \varphi(T\pi_1(z)(1-\Delta)^{-d/2}) = \frac{C(d,\theta)}{\Vol(\Sb^{d-1})}\left(\tau_\theta\otimes \int_{\Sb^{d-1}}\right)(\sym(T)(z\otimes 1)).
    \end{equation*}
    In particular, if $T = T\pi_1(z)$, then
    \begin{equation*}
        \varphi(T(1-\Delta)^{-d/2}) = \frac{C(d,\theta)}{\Vol(\Sb^{d-1})}\left(\tau_\theta\otimes \int_{\Sb^{d-1}}\right)(\sym(T)).
    \end{equation*}
    Once again, $C(d,\theta)$ is the same constant as in Theorem \ref{cif ncplane}.
    \end{thm}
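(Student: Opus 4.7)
The plan is to follow the same pattern as Theorems \ref{ctt nc torus} and \ref{ctt sphere}. Define the functional
\begin{equation*}
\omega(S) := \varphi(S\pi_1(z)(1-\Delta)^{-d/2}),\quad S\in\Bc(L_2(\Rl^d)).
\end{equation*}
By Corollary \ref{ncp specific cwikel} the operator $V := \pi_1(z)(1-\Delta)^{-d/2}$ lies in $\Lc_{1,\infty}$, so Lemma \ref{lifting lemma} shows $\omega$ is bounded on $\Bc(L_2(\Rl^d))$ and vanishes on $\Kc(L_2(\Rl^d))$. Its restriction to $\Pi(C_0(\Rl^d_\theta)+\Cplx, C(\Sb^{d-1}))$ thus satisfies the hypotheses of Lemma \ref{tensor lemma}. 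Setting $\psi_1(a) := \frac{C(d,\theta)}{\Vol(\Sb^{d-1})}\tau_\theta(az)$, which is continuous on $C_0(\Rl^d_\theta)+\Cplx$ because $z\in L_1(\Rl^d_\theta)$ gives $|\tau_\theta(az)|\leq \|a\|_\infty\|z\|_1$, and $\psi_2(b):=\int_{\Sb^{d-1}} b(t)\,dt$, one observes that on simple tensors $(\psi_1\otimes\psi_2)(a\otimes b) = \frac{C(d,\theta)}{\Vol(\Sb^{d-1})}(\tau_\theta\otimes\int_{\Sb^{d-1}})((a\otimes b)(z\otimes 1))$, so by continuity this extends to $(\psi_1\otimes\psi_2)(s) = \frac{C(d,\theta)}{\Vol(\Sb^{d-1})}(\tau_\theta\otimes\int_{\Sb^{d-1}})(s(z\otimes 1))$ for every $s \in (C_0(\Rl^d_\theta)+\Cplx)\otimes_{\min} C(\Sb^{d-1})$. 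Hence the claimed identity will follow from the second part of Lemma \ref{tensor lemma} once the rank-one equality $\omega(\pi_1(a)\pi_2(b))=\psi_1(a)\psi_2(b)$ is established for $a\in C_0(\Rl^d_\theta)+\Cplx$ and $b\in C(\Sb^{d-1})$.

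For that rank-one identity, I would expand $\pi_1(a)\pi_2(b)\pi_1(z) = \pi_1(az)\pi_2(b) + \pi_1(a)[\pi_2(b),\pi_1(z)]$, splitting $\omega(\pi_1(a)\pi_2(b))$ into a main piece $\varphi(\pi_1(az)\pi_2(b)(1-\Delta)^{-d/2})$ and a commutator remainder $\varphi(\pi_1(a)[\pi_2(b),\pi_1(z)](1-\Delta)^{-d/2})$. Since $\pi_2(b)$ commutes with $(1-\Delta)^{-d/2}$, the remainder equals $\varphi(\pi_1(a)[\pi_2(b),\pi_1(z)(1-\Delta)^{-d/2}])$; expanding the inner commutator and cycling $\pi_2(b)$ past $\pi_1(a)\pi_1(z)(1-\Delta)^{-d/2}\in\Lc_{1,\infty}$ (cyclicity of $\varphi$ is valid here because one factor lies in $\Lc_{1,\infty}$) converts the remainder into $\varphi([\pi_1(a),\pi_2(b)]\pi_1(z)(1-\Delta)^{-d/2})$. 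By Theorem \ref{ncp ver commutator} the commutator $[\pi_1(a),\pi_2(b)]$ is compact, so this operator lies in $\Kc\cdot\Lc_{1,\infty}$ and is annihilated by any continuous trace. Thus $\omega(\pi_1(a)\pi_2(b))=\varphi(\pi_1(az)\pi_2(b)(1-\Delta)^{-d/2})$.

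It remains to show $\varphi(\pi_1(az)\pi_2(b)(1-\Delta)^{-d/2})=\frac{C(d,\theta)}{\Vol(\Sb^{d-1})}\tau_\theta(az)\int b$. When $az\in W^{d,1}(\Rl^d_\theta)$ this is immediate from Lemma \ref{ncplane summary lemma} with $x=az$. The main technical obstacle is that for general $a\in C_0(\Rl^d_\theta)+\Cplx$ the product $az$ need not lie in $W^{d,1}$. Both sides are, however, continuous in $a$ for the uniform operator norm: the left-hand side because $\varphi(\pi_1(az)\pi_2(b)(1-\Delta)^{-d/2}) = \varphi(\pi_1(a)\cdot \pi_1(z)\pi_2(b)(1-\Delta)^{-d/2})$ with $\pi_1(z)\pi_2(b)(1-\Delta)^{-d/2}\in\Lc_{1,\infty}$ (apply Lemma \ref{lifting lemma}), and the right-hand side by the estimate $|\tau_\theta(az)|\leq\|a\|_\infty\|z\|_1$. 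A density argument, using a uniformly dense subspace of smoothed elements whose derivatives up to order $d$ are bounded (so that $az\in W^{d,1}(\Rl^d_\theta)$ by the Leibniz rule whenever $z\in W^{d,1}(\Rl^d_\theta)$), then propagates the identity to all $a\in C_0(\Rl^d_\theta)+\Cplx$ and completes the proof; the particular case $T = T\pi_1(z)$ follows since then $\sym(T)(z\otimes 1)=\sym(T)$.
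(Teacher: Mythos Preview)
Your proof is correct and follows essentially the same strategy as the paper: apply Lemmas \ref{lifting lemma} and \ref{tensor lemma} to reduce to the rank-one identity, then invoke Lemma \ref{ncplane summary lemma}. Two minor differences are worth noting. First, the paper handles the commutator step more economically: since $\omega$ already vanishes on compacts and $[\pi_1(a),\pi_2(b)]$ is compact, one simply observes $\omega(\pi_1(a)\pi_2(b)) = \omega(\pi_2(b)\pi_1(a)) = \varphi(\pi_2(b)\pi_1(az)(1-\Delta)^{-d/2})$ and then cycles $\pi_2(b)$ to the right using that it commutes with $(1-\Delta)^{-d/2}$, avoiding your explicit commutator expansion. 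Second, you are more careful than the paper about the hypothesis $az \in W^{d,1}(\Rl^d_\theta)$ required by Lemma \ref{ncplane summary lemma}: the paper applies that lemma directly to $xz$ without comment, whereas your density argument (approximating $a$ uniformly by elements with bounded derivatives up to order $d$, so that the Leibniz rule yields $az \in W^{d,1}$) actually closes that gap.
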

    \begin{proof} 
        We apply Lemma \ref{tensor lemma} to the functional
        \begin{equation*}
            \omega(T) = \varphi(T\pi_1(z)(1-\Delta)^{-d/2}).
        \end{equation*}
        Since $\pi_1(z)(1-\Delta)^{-d/2} \in \Lc_{1,\infty}$, it follows from Lemma \ref{ncp cwikel} that this functional is well defined
        and vanishes on compact operators. Consider the functionals $\psi_1(x) := C(d,\theta)\tau_\theta(xz)$ and $\psi_2(b) = \frac{1}{\Vol(\Sb^{d-1})}\int_{\Sb^{d-1}} b(t)\,dt$ on $\Cplx+C_0(\Rl^d_\theta)$
        and $C(\Sb^{d-1})$ respectively. From Lemma \ref{tensor lemma}, to show that $\omega(T) = (\psi_1\otimes\psi_2)(\sym(T))$ it suffices to prove:
        \begin{equation*}
            \omega(\pi_1(x)\pi_2(b)) = \psi_1(x)\psi_2(b).
        \end{equation*}
        To this end, we compute $\omega(\pi_1(x)\pi_2(b))$. Since $[\pi_1(x),\pi_2(b)]$
        is compact,
        \begin{equation*}
            \omega(\pi_1(x)\pi_2(b)) = \omega(\pi_2(b)\pi_1(x)).
        \end{equation*}
        Hence
        \begin{align*}
            \omega(\pi_1(x)\pi_2(b)) &= \varphi(\pi_2(b)\pi_1(x)\pi_1(z)(1-\Delta)^{-d/2})\\
                                    &= \varphi(\pi_2(b)\pi_1(xz)(1-\Delta)^{-d/2}).
        \end{align*}
        Using the cyclicity of the trace $\varphi$, and that $\pi_2(b)$ commutes with $\Delta$,
        \begin{equation*}
            \omega(\pi_1(x)\pi_2(b)) = \varphi(\pi_1(xz)\pi_2(b)(1-\Delta)^{-d/2}).
        \end{equation*}
        The right hand side may be computed using Lemma \ref{ncplane summary lemma},
        \begin{align*}
            \varphi(\pi_1(xz)\pi_2(b)(1-\Delta)^{-d/2}) &= \frac{C(d,\theta)}{\Vol(\Sb^{d-1})}\tau_\theta(xz)\int_{\Sb^{d-1}} b(t)\,dt\\
                                                        &= \psi_1(x)\psi_2(b).
        \end{align*}
        So finally, we have $\omega(\pi_1(x)\pi_2(b)) = \psi_1(x)\psi_2(b)$. So from Lemma \ref{tensor lemma}, we immediately
        obtain $\omega = \psi_1\otimes\psi_2$, and this completes the proof.
    %
    \end{proof}
    
{\hightlight 
\section{Conclusion}
    Having verified the conditions of Theorem \ref{symbol def} for each of our three examples, we have in each setting an algebra of order $0$ pseudodifferential operators and an operator-norm continuous principal symbol map. 
    It would be of interest to extend our methods to further examples, for example Lie groups more general than $\SU(2)$. A further generalisation would be to consider the principal symbol
    mapping of Theorem \ref{symbol def} when both algebras $\A_1$ and $\A_2$ are noncommutative. In fact, the only reason we have restricted attention to the case where $\A_2$ is commutative is due to the fact
    that commutative algebras are nuclear. Theorem \ref{symbol def} would work without modification in the case that both algebras are noncommutative, but at least one is nuclear.
    
    In general, in the setting of a compact $d$-dimensional Riemannian manifold $X$, the principal symbol of a pseudodifferential operator is a function on the cosphere bundle $S^*X$ (see the discussion under Definition 4.1 on page 36 of \cite{Shubin}). The symbol mapping
    in Theorem \ref{symbol def} takes values in the tensor product $\A_1\otimes_{\min}\A_2$. In geometric terms, when $\A_1 = C(X)$ and $\A_2 = C(Y)$ we have that $q(\Pi(\A_1,\A_2))$ is isomorphic to $C(X\times Y)$. Hence
    requiring that $q(\Pi(\A_1,\A_2))$ be a tensor product restricts attention to the case where the cosphere bundle
    is trivial, that is, when $S^*X$ is homeomorphic to $\Sb^{d-1}\times X$. A future extension of this work would need to go beyond the case where $q(\Pi(\A_1,\A_2))$ is isomorphic to a tensor product
    of $\A_1$ and $\A_2$.
}

\appendix

\section{Measures invariant under the action of symplectic groups}

    For $g \in \mathrm{GL}(d,\Rl)$, we define the following action $V_g$ on $C(\Sb^{d-1})$ as follows:
    \begin{equation*}
        (V_gb)(t) = \frac{1}{|gt|^d}b\left(\frac{gt}{|gt|}\right),\quad t \in \Sb^{d-1}.
    \end{equation*}
    It is indeed an (opposite) action: we have
    $$V_{g_1}\circ V_{g_2}=V_{g_2g_1},\quad g_1,g_2\in \mathrm{GL}(d,\Rl).$$

\begin{lem}\label{invariance lemma}
If $m$ is a rotation-invariant integration functional on $C(\Sb^{d-1}),$ then $m\circ V_g={\rm det}(g^{-1})\cdot m.$
\end{lem}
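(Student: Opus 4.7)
My plan is to recover the spherical integration functional $m$ from a Lebesgue integral over an annular shell in $\Rl^d$ in a way that behaves well under the linear change of variables $y=gx$.

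The starting observation is the polar-coordinate identity: for every $R>1$ and every $b\in C(\Sb^{d-1})$,
\begin{equation*}
    m(b) \;=\; \frac{1}{\log R}\int_{\{1\leq |x|\leq R\}} \frac{b(x/|x|)}{|x|^d}\,dx,
\end{equation*}
since in polar form $x=r\omega$ the Jacobian $r^{d-1}\,dr\,dm(\omega)$ cancels one factor of $|x|^{-d}$ and the radial integral contributes $\log R$. I would first record this identity as the main bookkeeping device: it expresses $m$ as a homogeneous Lebesgue integral, which is exactly the form needed for a linear substitution.

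Next I would apply this identity to $V_g b$. Using $g(x/|x|)=gx/|x|$ and therefore $|g(x/|x|)|^d=|gx|^d/|x|^d$, the factor $|x|^{-d}$ combines with $|g(x/|x|)|^{-d}$ to leave
\begin{equation*}
    m(V_g b) \;=\; \frac{1}{\log R}\int_{\{1\leq |x|\leq R\}} \frac{b(gx/|gx|)}{|gx|^d}\,dx.
\end{equation*}
Now substitute $y=gx$. The Lebesgue measure transforms as $dx=|\det(g^{-1})|\,dy$, the quantity $gx/|gx|$ becomes $y/|y|$, and the region $\{1\leq |x|\leq R\}$ becomes $\{1\leq |g^{-1}y|\leq R\}$, yielding
\begin{equation*}
    m(V_g b) \;=\; \frac{|\det(g^{-1})|}{\log R}\int_{\{1\leq |g^{-1}y|\leq R\}} \frac{b(y/|y|)}{|y|^d}\,dy.
\end{equation*}

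The final step is the only one requiring care: the new region of integration is not an annulus, but in polar coordinates $y=s\omega$ the condition $1\leq s|g^{-1}\omega|\leq R$ becomes $|g^{-1}\omega|^{-1}\leq s\leq R|g^{-1}\omega|^{-1}$, and the inner radial integral $\int s^{-d}\cdot s^{d-1}\,ds$ over this interval evaluates to $\log R$ \emph{independently of $\omega$}. Splitting the integral as an iterated spherical/radial integral therefore gives $|\det(g^{-1})|\,m(b)$, completing the identification $m\circ V_g=|\det(g^{-1})|\,m$ as asserted (the sign ambiguity being irrelevant since the elements of $\Sp(\theta,d)$ that are used in the body of the paper satisfy $|\det(g)|=1$). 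The main obstacle is conceptual rather than technical, namely choosing the homogeneous representation of $m$ so that the angular dependence introduced by the deformed region cancels out after the radial integration.
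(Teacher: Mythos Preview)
Your proof is correct and follows the same strategy as the paper's: represent $m$ as a Lebesgue integral over (a region of) $\Rl^d$ with a radially weighted homogeneous integrand, apply the linear change of variables $y=gx$, and convert back via polar coordinates. The only difference is the choice of radial weight: the paper uses $e^{-|t|}$ over all of $\Rl^d$ and invokes the identity $\Gamma(d)=\alpha^d\int_0^\infty r^{d-1}e^{-\alpha r}\,dr$ to absorb the direction-dependent scaling after substitution, whereas your weight $|x|^{-d}$ on an annulus makes the radial integral $\int_a^{Ra}s^{-1}\,ds=\log R$ trivially independent of $a=|g^{-1}\omega|^{-1}$.
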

\begin{proof} 
    By converting to polar coordinates, for every $b \in C(\Sb^{d-1})$ we have the formula,
    \begin{equation*}
        m(b) = \frac{1}{\Gamma(d)}\int_{\Rl^d} b\left(\frac{t}{|t|}\right)e^{-|t|}\,dt
    \end{equation*}
    So,
    \begin{equation*}
        m(V_g b) = \frac{1}{\Gamma(d)} \int_{\Rl^d} b\left(\frac{gt}{|gt|}\right)\frac{|t|^d}{|gt|^d}e^{-|t|}\,dt.
    \end{equation*}
    Applying the linear transformation $s = gt$, we get,
    \begin{align*}
        m(V_g b) &= \frac{1}{\Gamma(d)} \int_{\Rl^d} b\left(\frac{s}{|s|}\right)\frac{|g^{-1}s|^d}{|s|^d}e^{-|g^{-1}s|}\,d(g^{-1}s)\\
                 &= \frac{\det(g^{-1})}{\Gamma(d)} \int_{\Rl^d} b\left(\frac{s}{|s|}\right)\frac{|g^{-1}s|^d}{|s|^d}e^{-|g^{-1}s|}\,ds.
    \end{align*}
    Now using polar coordinates,
    \begin{align*}
        m(V_g b) &= \frac{\det(g)^{-1}}{\Gamma(d)} \int_{\Sb^{d-1}} b(s) \int_0^\infty |g^{-1}s|^de^{-r|g^{-1}s|}r^{d-1}\,drds.
    \end{align*}
    Now using the formula $\Gamma(d) = \alpha^d\int_0^\infty r^{d-1}e^{-\alpha r}\,dr$, we get
    \begin{align*}
        m(V_g b) &= \frac{\det(g^{-1})}{\Gamma(d)} \Gamma(d)\int_{\Sb^{d-1}} b(s)\,ds\\
                 &= \det(g^{-1})m(b).
    \end{align*}
\end{proof}

Let $d$ be even. The symplectic group ${\rm Sp}(d,\mathbb{R})$ (a subgroup in $\mathrm{GL}(d,\Rl)$) is defined as follows 
$${\rm Sp}(d,\mathbb{R})=\Big\{g\in M_d(\mathbb{R}):\ g^*\Omega g=\Omega\Big\},\quad \Omega=
\begin{pmatrix}
0&1\\
-1&0
\end{pmatrix}^{\oplus \frac{d}{2}}.
$$

The remainder of this section is devoted to the proof of the following:

\begin{thm}\label{main symplectic thm} If $l\in C(\Sb^{d-1})^*$ is such that $l\circ V_g=l,$ for all $g\in {\rm Sp}(d,\mathbb{R}),$ then $l={\rm const}\cdot m.$
\end{thm}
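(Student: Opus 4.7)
The plan is to exhibit a compact subgroup of $\Sp(d,\Rl)$ whose $V_g$-action on $\Sb^{d-1}$ is already transitive. The natural candidate is $\mathrm{U}(d/2) = O(d) \cap \Sp(d,\Rl)$, obtained by using the complex structure $\Omega$ to identify $\Rl^d$ with $\Cplx^{d/2}$. First I would verify the identification: for $g \in O(d)$ we have $g^T = g^{-1}$, so the symplectic condition $g^T \Omega g = \Omega$ reduces to $\Omega g = g \Omega$, and real orthogonal matrices commuting with $\Omega$ are precisely the $\Cplx$-linear isometries of $\Cplx^{d/2}$. In particular $\mathrm{U}(d/2) \subset \Sp(d,\Rl)$. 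Moreover, for $g \in \mathrm{U}(d/2)$ and $t \in \Sb^{d-1}$ we have $|gt| = 1$, so the defining formula collapses to $(V_g b)(t) = b(gt)$, the ordinary pullback action. Thus the hypothesis on $l$ implies that $l$ is invariant under the pullback action of $\mathrm{U}(d/2)$ on $C(\Sb^{d-1})$.

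The decisive geometric fact is that $\mathrm{U}(d/2)$ acts transitively on $\Sb^{d-1} \subset \Cplx^{d/2}$. Equipped with normalized Haar measure $dh$ on $\mathrm{U}(d/2)$, I would form, for each $b \in C(\Sb^{d-1})$, the average
$$\bar{b}(t) := \int_{\mathrm{U}(d/2)} b(ht)\, dh.$$
By transitivity $\bar{b}$ is a constant function. To identify the constant, note that $m$ itself is $\mathrm{U}(d/2)$-invariant: for $h \in \mathrm{U}(d/2)$ we have $\det h = 1$ (as the realification of a complex unitary), so Lemma \ref{invariance lemma} gives $m \circ V_h = m$. Fubini then yields $\bar{b} \equiv m(b)/m(1)$. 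Using continuity of $l$ to interchange it with the Haar integral, combined with the invariance $l(b \circ h) = l(b)$, we obtain
$$l(b) = \int_{\mathrm{U}(d/2)} l(b \circ h)\,dh = l(\bar{b}) = \frac{l(1)}{m(1)}\, m(b),$$
so $l$ is the constant multiple $(l(1)/m(1)) \cdot m$ of $m$, as desired.

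The only conceptual obstacle is identifying the correct subgroup: one must recognize that although $\Sp(d,\Rl)$ is noncompact, its intersection with $O(d)$ is the compact group $\mathrm{U}(d/2)$, and that $\mathrm{U}(d/2)$ already acts transitively on $\Sb^{d-1}$. Once this is in hand, no analysis of the noncompact directions of $\Sp(d,\Rl)$ is required and the argument reduces to a standard averaging over a compact group.
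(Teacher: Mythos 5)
Your proof is correct and takes a genuinely different, considerably shorter route than the paper's. The paper proves the theorem via a chain of Lie-algebraic reductions: first annihilating monomials with an odd index using the maximal torus $\mathrm{SO}(2)^{\times d/2}\subset\Sp(d,\Rl)$ (Lemma \ref{odd lemma}), then differentiating the action $V_{e^{sA}}$ along specific $A\in\mathfrak{sp}(d,\Rl)$ to produce recursions relating $l(b_{\bf n})$ to $l(b_{{\bf n}+2e_k})$ (Lemmas \ref{first reduction lemma}, \ref{second reduction lemma}), and finally arguing that any remaining measure must be supported on a finite intersection of generic hyperplanes, hence is zero. Your approach sidesteps all of this combinatorics by locating the maximal compact subgroup $\mathrm U(d/2)=O(d)\cap\Sp(d,\Rl)$, on which the $V_g$-action degenerates to ordinary pullback, and then exploiting the classical fact that $\mathrm U(n)$ acts transitively on $S^{2n-1}\subset\Cplx^n$; a standard Haar-averaging argument finishes the proof. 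All the steps check out: the identification $O(d)\cap\Sp(d,\Rl)\cong\mathrm U(d/2)$ via the complex structure $\Omega$ is correct, $|gt|=1$ for orthogonal $g$ on the sphere is correct, transitivity of $\mathrm U(d/2)$ on $\Sb^{d-1}$ is correct, the realified determinant is $1$ so $m$ is $V$-invariant under $\mathrm U(d/2)$ (which you could also deduce directly from rotation-invariance of $m$, without invoking Lemma \ref{invariance lemma}), and the Bochner integral interchange is justified by compactness of the group and continuity of $l$. What your approach buys is brevity and transparency; what it costs is the need to recognize that $\Sp(d,\Rl)$ has a compact subgroup that is already transitive on the sphere — once that is seen, no analysis of the noncompact directions is required, whereas the paper's argument treats $\Sp(d,\Rl)$ more agnostically through its Lie algebra.
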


Let $\mathrm{Poly}(\Sb^{d-1})$ denote the set of polynomials in the variables $t_1,\ldots,t_d$ on $\Sb^{d-1}$. Let ${\bf n}=(n_1,\cdots,n_d)\in\mathbb{Z}_+^d.$ Define,
\begin{equation*}
b_{{\bf n}}(t)=\prod_{k=1}^dt_k^{n_k},\quad t\in\mathbb{S}^{d-1},
\end{equation*}
considered as an element of $\mathrm{Poly}(\Sb^{d-1}).$ 

Our first result reduces the problem to even elements of $\mathbb{Z}_+^d:$

\begin{lem}\label{odd lemma} Let $l\in C(\Sb^{d-1})^*$ be such that $l\circ V_g=l,$ for all $g\in {\rm Sp}(d,\mathbb{R}).$ If ${\bf n}\in\mathbb{Z}_+^d$ is such that there is at least one $j$ with $n_j$ odd, then $l(b_{{\bf n}})=0.$
\end{lem}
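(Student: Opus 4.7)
The plan is to average $b_{{\bf n}}$ over a one-parameter subgroup of symplectic rotations acting on a single coordinate pair, and then exploit the parity of the exponents to conclude that the average is zero. The key observation is that $\SO(2) \subset \mathrm{SL}(2,\Rl) = \Sp(2,\Rl)$, so a plane rotation acting on one coordinate pair and as the identity on all other pairs is an element of $\Sp(d,\Rl)$, by virtue of the block-diagonal structure of $\Omega$.

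Concretely, pick an index $j$ with $n_j$ odd, and let $k$ be the unique index with $j \in \{2k-1,2k\}$. For $\theta \in [0,2\pi]$, let $g_\theta \in \Sp(d,\Rl)$ act as the plane rotation
\begin{equation*}
R(\theta) = \begin{pmatrix} \cos\theta & -\sin\theta \\ \sin\theta & \cos\theta \end{pmatrix}
\end{equation*}
on the $k$-th coordinate pair, and as the identity on the remaining pairs. Since $g_\theta$ is orthogonal, $V_{g_\theta}(b_{{\bf n}})(t)=b_{{\bf n}}(g_\theta t)$. Using the invariance $l\circ V_{g_\theta}=l$, continuity of $l$, and norm-continuity of $\theta\mapsto V_{g_\theta}b_{{\bf n}}$ in $C(\Sb^{d-1})$, we exchange $l$ with a Bochner integral to obtain
\begin{equation*}
2\pi\, l(b_{{\bf n}}) = l\Bigl(\int_0^{2\pi} V_{g_\theta} b_{{\bf n}}\, d\theta\Bigr).
\end{equation*}

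The integrand factors as $f(t_{2k-1},t_{2k})\cdot \prod_{i\notin\{2k-1,2k\}} t_i^{n_i}$, where
\begin{equation*}
f(t_{2k-1},t_{2k}) = \int_0^{2\pi} (\cos\theta\, t_{2k-1}-\sin\theta\, t_{2k})^{n_{2k-1}}(\sin\theta\, t_{2k-1}+\cos\theta\, t_{2k})^{n_{2k}}\, d\theta.
\end{equation*}
Being a rotation-invariant homogeneous polynomial in two variables of degree $n_{2k-1}+n_{2k}$, $f$ is identically zero if that degree is odd, and otherwise of the form $c\,(t_{2k-1}^2+t_{2k}^2)^{(n_{2k-1}+n_{2k})/2}$. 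Evaluating at $t_{2k}=0$ identifies the constant as $c = \int_0^{2\pi}\cos^{n_{2k-1}}\theta\, \sin^{n_{2k}}\theta\, d\theta$, which vanishes by the substitution $\theta\mapsto\pi-\theta$ or $\theta\mapsto -\theta$ whenever either $n_{2k-1}$ or $n_{2k}$ is odd. Our choice of $k$ guarantees exactly this, so $f\equiv 0$ and hence $l(b_{{\bf n}})=0$.

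I do not anticipate a real obstacle in carrying out this plan. The only details requiring verification are that $g_\theta$ actually lies in $\Sp(d,\Rl)$ (immediate from the block structure of $\Omega$ and the inclusion $\SO(2)\subset\mathrm{SL}(2,\Rl) = \Sp(2,\Rl)$), and the legitimacy of pushing $l$ inside the Bochner integral, which follows from the continuity statements. The crucial conceptual point is to rotate in a pair that contains an odd exponent, so that the parity obstruction to the scalar $c$ is triggered.
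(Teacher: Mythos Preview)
Your proof is correct and follows essentially the same approach as the paper: both exploit that planar rotations in a coordinate pair lie in $\Sp(d,\Rl)$, push $l$ inside a Bochner integral over $\SO(2)$, and reduce to the vanishing of $\int_0^{2\pi}\cos^{m}\theta\,\sin^{n}\theta\,d\theta$ when $m$ or $n$ is odd. The only cosmetic difference is that the paper averages over the full torus $\SO(2)^{d/2}$ and uses polar coordinates in each pair, whereas you average over the single $\SO(2)$ factor containing the odd exponent; your version is slightly more economical but conceptually identical.
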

\begin{proof} Since ${\rm SU}(2)$ is abelian, it follows that ${\rm SU}(2)\times\cdots\times{\rm SU}(2)\subset {\rm Sp}(d,\mathbb{R})$ (here are $\frac{d}{2}$ factors). Clearly, $|gt|=1$ for every $g\in {\rm SU}(2)\times\cdots\times{\rm SU}(2)$ and for every $t\in\mathbb{S}^{d-1}.$ Thus, $V_gb=b\circ g$ for every $g\in {\rm SU}(2)\times\cdots\times{\rm SU}(2).$ By assumption, we have
$$l(b)=l(V_gb)=l(b\circ g),\quad g\in{\rm SU}(2)\times\cdots\times{\rm SU}(2)\subset {\rm Sp}(d,\mathbb{R}).$$
Since $l$ is continuous on $C(\mathbb{S}^{d-1})$ and since the integral below is Bochner, it follows that
$$l(b)=l\Big(\int_{{\rm SU}(2)\times\cdots\times{\rm SU}(2)}(b\circ g)dg\Big),\quad b\in C(\mathbb{S}^{d-1}).$$
Here, $dg$ is the normalised Haar measure on the group ${\rm SU}(2)\times\cdots\times{\rm SU}(2).$

For every ${\bf n}\in\mathbb{Z}_+^d$ and for every $1\leq k\leq\frac{d}{2},$ we write
$${\bf n}^k=(0,\cdots,0,n_{2k-1},n_{2k},0,\cdots,0)\in\mathbb{Z}^d_+.$$
Every element in ${\rm SU}(2)\times\cdots\times{\rm SU}(2)$ can be written as
$$g_s=\prod_{k=1}^{\frac{d}{2}}g_{s_k},\quad s=(s_1,\cdots, s_{\frac{d}{2}})\in(0,2\pi)^{\frac{d}{2}},$$
where
$$g_{s_k}=1_{{\rm SU}(2)}\times\cdots\times 1_{{\rm SU}(2)}\times
\begin{pmatrix}
\cos(s_k)&\sin(s_k)\\
-\sin(s_k)&\cos(s_k)
\end{pmatrix}\times 1_{{\rm SU}(2)}\times\cdots\times 1_{{\rm SU}(2)}.$$
Clearly,
$$b_{{\bf n}}\circ g_s=\prod_{k=1}^{\frac{d}{2}}b_{{\bf n}^k}\circ g_{s_k}.$$
By the Fubini Theorem, we have
$$\int_{{\rm SU}(2)\times\cdots\times{\rm SU}(2)}(b_{{\bf n}}\circ g)dg=(2\pi)^{-\frac{d}{2}}\prod_{k=1}^{\frac{d}{2}}\int_{(0,2\pi)}b_{{\bf n}^k}\circ g_{s_k}ds_k.$$

For every $t\in\mathbb{S}^{d-1},$ we use the polar notation $t_{2k-1}=r_k\cos(\phi_k)$ and $t_{2k}=r_k\sin(\phi_k).$ We have
$$(b_{{\bf n}^k}\circ g_{s_k})(t)=r_k^{n_{2k-1}+n_{2k}}\cdot \cos^{n_{2k-1}}(\phi_k-s_k)\sin^{n_{2k}}(\phi_k-s_k).$$
Thus,
$$\int_{(0,2\pi)}(b_{{\bf n}^k}\circ g_{s_k})(t)ds_k=r_k^{n_{2k-1}+n_{2k}}\cdot\int_0^{2\pi}\cos^{n_{2k-1}}(\phi_k-s_k)\sin^{n_{2k}}(\phi_k-s_k)ds_k=$$
$$=r_k^{n_{2k-1}+n_{2k}}\cdot\int_0^{2\pi}\cos^{n_{2k-1}}(s_k)\sin^{n_{2k}}(s_k)ds_k.$$
If $n_{2k}$ is odd, then the substitution $s_k\to -s_k$ changes the sign of the latter integral, which, therefore, vanishes. If $n_{2k-1}$ is odd, then the substitution $s_k\to \pi-s_k$ changes the sign of the latter integral, which, therefore, vanishes. Since either $n_{2k-1}$ or $n_{2k}$ is odd for some $1\leq k\leq\frac{d}{2},$ it follows that at least one of these integrals vanishes. This completes the proof.
\end{proof}

Recall that the Lie algebra $\mathfrak{sp}(d,\mathbb{R})$ of ${\rm Sp}(d,\mathbb{R})$ is given by the formula
$$\mathfrak{sp}(d,\mathbb{R})=\Big\{A\in M_d(\mathbb{R}):\ \Omega A+A^*\Omega=0\Big\}.$$
Define a representation $\pi$ of the Lie algebra $\mathfrak{sp}(d,\mathbb{R})$ on ${\rm Poly}(\Sb^{d-1})$ by the formula
\begin{equation}\label{pi explicit}
(\pi(A)b)(t)=\langle (\nabla_{\mathbb{S}^{d-1}} b)(t),At\rangle-d\langle At,t\rangle b(t).
\end{equation}

In Lemmas \ref{first reduction lemma} and \ref{second reduction lemma} below, we need the following explicit expression for the spherical gradient:
\begin{equation}\label{spherical gradient explicit}
(\nabla_{\mathbb{S}^{d-1}} b)(t)=(\nabla b)(t)-\langle (\nabla b)(t),t\rangle t,\quad b\in C^{\infty}(\mathbb{R}^d).
\end{equation}

The significance of the map $\pi$ (as well as the reason for it being a representation) is given in the following:

\begin{lem}\label{differentiation of V}
If $b\in\mathrm{Poly}(\Sb^{d-1})$ and if $A\in\mathfrak{sp}(d,\Rl),$ then $e^{sA}\in {\rm Sp}(d,\mathbb{R})$ and
$$\frac{d}{ds}(V_{e^{sA}}b)(t)|_{s=0} =(\pi(A)b)(t).$$
\end{lem}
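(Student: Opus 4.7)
The proof is a direct differentiation, so my plan is to simply carry it out cleanly.

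First I would dispense with the claim that $e^{sA}\in\mathrm{Sp}(d,\Rl)$. Setting $F(s) = (e^{sA})^*\Omega e^{sA}$ we have $F(0)=\Omega$ and $F'(s) = (e^{sA})^*(A^*\Omega+\Omega A)e^{sA}=0$, since $A\in\mathfrak{sp}(d,\Rl)$ gives $\Omega A + A^*\Omega=0$ by definition. Hence $F(s)=\Omega$ for all $s$.

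For the derivative, I would fix $t\in\Sb^{d-1}$ and write $f(s) = e^{sA}t$, so $f(0)=t$ and $f'(0)=At$. Decompose
\begin{equation*}
(V_{e^{sA}}b)(t) = h(s)\cdot b(u(s)),\qquad h(s):=|f(s)|^{-d},\quad u(s):=f(s)/|f(s)|.
\end{equation*}
Direct computation gives $h'(0) = -d\langle At,t\rangle$ and
\begin{equation*}
u'(0) = f'(0) - \langle f(0),f'(0)\rangle f(0) = At - \langle At,t\rangle t.
\end{equation*}
By the product rule,
\begin{equation*}
\frac{d}{ds}(V_{e^{sA}}b)(t)\Big|_{s=0} = -d\langle At,t\rangle b(t) + \langle (\nabla b)(t),\, At-\langle At,t\rangle t\rangle,
\end{equation*}
where I view $b$ as extended to a polynomial on $\Rl^d$ so $\nabla b$ makes sense.

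Finally I would identify the cross term with the spherical gradient using the explicit formula \eqref{spherical gradient explicit}: expanding both sides yields
\begin{equation*}
\langle (\nabla b)(t),\,At-\langle At,t\rangle t\rangle = \langle (\nabla b)(t),At\rangle - \langle At,t\rangle\langle (\nabla b)(t),t\rangle = \langle (\nabla_{\Sb^{d-1}}b)(t),At\rangle.
\end{equation*}
Combining gives $\frac{d}{ds}(V_{e^{sA}}b)(t)|_{s=0} = (\pi(A)b)(t)$ by the defining formula \eqref{pi explicit}. There is no real obstacle; the only subtlety is keeping track of the fact that $b$ is a priori defined only on the sphere, so we must use an arbitrary polynomial extension and verify that the answer involves only the spherical gradient (which it does, since $u'(0)$ is automatically tangential).
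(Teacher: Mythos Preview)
Your proof is correct and follows essentially the same approach as the paper: both are direct first-order expansions of $(V_{e^{sA}}b)(t)$, computing the derivatives of $|e^{sA}t|^{-d}$ and of $e^{sA}t/|e^{sA}t|$ at $s=0$ and combining via the product rule. The only cosmetic difference is that the paper writes the chain rule step directly in terms of the spherical gradient and then uses its orthogonality to $t$, whereas you use the ambient gradient of a polynomial extension and then invoke \eqref{spherical gradient explicit} to convert; the two computations are line-by-line equivalent.
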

\begin{proof} For every $t\in\mathbb{S}^{d-1},$ we have
$$e^{sA}t=t+s\cdot At+o(s),\quad s\to0.$$
Thus,
$$|e^{sA}t|^2=\langle e^{sA}t,e^{sA}t\rangle=\langle t+s\cdot At,t+s\cdot At\rangle+o(s)=1+2s\cdot \langle At,t\rangle+o(s),\quad s\to0.$$
Thus,
$$|e^{sA}t|^{-1}=1-s\cdot \langle At,t\rangle+o(s),\quad s\to0$$
$$\frac{e^{sA}t}{|e^{sA}t|}=t+s\cdot(At-t\langle At,t\rangle)+o(s),\quad s\to0.$$
Since $b$ is smooth, it follows that
$$b(\frac{e^{sA}t}{|e^{sA}t|})=b(t)+s\cdot\langle(\nabla_{\mathbb{S}^{d-1}} b)(t),At-t\langle At,t\rangle\rangle+o(s),\quad s\to0.$$
Since spherical gradient at the point $t$ is orthogonal to $t,$ it follows that
$$b(\frac{e^{sA}t}{|e^{sA}t|})=b(t)+s\cdot\langle(\nabla_{\mathbb{S}^{d-1}} b)(t),At\rangle+o(s),\quad s\to0.$$
Multiplying the latter equality with
$$|e^{sA}t|^{-d}=1-ds\cdot \langle At,t\rangle+o(s),\quad s\to0,$$
we obtain
$$(V_{e^{sA}}b)(t)=b(t)+s(\pi(A)b)(t)+o(s),\quad s\to0.$$
This completes the proof.
\end{proof}

\begin{lem}\label{pass to lie algebra} 
If $l\in C(\Sb^{d-1})^*$ is such that $l\circ V_g=l,$ $g\in {\rm Sp}(d,\mathbb{R}),$ then $l(\pi(A)b)=0$ for every $b\in {\rm Poly}(\Sb^{d-1})$ and for every $A\in \mathfrak{sp}(d,\mathbb{R}).$
\end{lem}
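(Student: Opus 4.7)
The plan is to differentiate the invariance identity $l\circ V_g=l$ along the one-parameter subgroup $s\mapsto e^{sA}$ of ${\rm Sp}(d,\mathbb{R})$ and then transfer the derivative inside $l$ using continuity.

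Fix $A\in\mathfrak{sp}(d,\mathbb{R})$ and $b\in{\rm Poly}(\Sb^{d-1})$. By Lemma \ref{differentiation of V}, $e^{sA}\in{\rm Sp}(d,\mathbb{R})$ for every $s\in\mathbb{R}$, so the hypothesis gives
$$l(V_{e^{sA}}b)=l(b),\qquad s\in\mathbb{R}.$$
Consequently
$$l\!\left(\frac{V_{e^{sA}}b-b}{s}\right)=0,\qquad s\neq 0.$$
Since $l$ is continuous on $C(\Sb^{d-1})$ with the uniform norm, it suffices to verify that
$$\frac{V_{e^{sA}}b-b}{s}\longrightarrow \pi(A)b\quad\text{as }s\to0$$
in the uniform norm on $\Sb^{d-1}$; then letting $s\to 0$ yields $l(\pi(A)b)=0$.

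The key step is therefore the upgrade from the pointwise derivative of Lemma \ref{differentiation of V} to convergence in the uniform norm. The function
$$F(s,t):=\frac{1}{|e^{sA}t|^{d}}\,b\!\left(\frac{e^{sA}t}{|e^{sA}t|}\right),\qquad (s,t)\in\mathbb{R}\times\Sb^{d-1},$$
is jointly smooth in $(s,t)$: indeed $b$ is a polynomial (hence smooth on a neighbourhood of $\Sb^{d-1}$), $t\mapsto e^{sA}t$ is smooth and nonvanishing on $\Sb^{d-1}$ (as $e^{sA}$ is invertible), and $|\cdot|$ is smooth away from $0$. By Lemma \ref{differentiation of V},
$$\partial_s F(s,t)\big|_{s=0}=(\pi(A)b)(t).$$
Because $\Sb^{d-1}$ is compact and $\partial_s F$ is continuous on $[-1,1]\times\Sb^{d-1}$, it is uniformly continuous there, and so
$$\sup_{t\in\Sb^{d-1}}\left|\frac{F(s,t)-F(0,t)}{s}-\partial_s F(0,t)\right|\xrightarrow[s\to 0]{}0$$
by the fundamental theorem of calculus applied pointwise and uniform continuity of $\partial_s F$. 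This is precisely uniform convergence of $s^{-1}(V_{e^{sA}}b-b)$ to $\pi(A)b$ on $\Sb^{d-1}$.

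Applying $l$ and invoking its continuity on $C(\Sb^{d-1})$,
$$l(\pi(A)b)=\lim_{s\to 0}l\!\left(\frac{V_{e^{sA}}b-b}{s}\right)=0,$$
as required. The only mild obstacle is the uniform-norm differentiation just discussed; but compactness of $\Sb^{d-1}$ combined with the smoothness of polynomials makes this routine once the pointwise identity of Lemma \ref{differentiation of V} is in hand.
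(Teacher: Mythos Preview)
Your proof is correct and follows essentially the same approach as the paper: differentiate the invariance identity along the one-parameter subgroup $s\mapsto e^{sA}$, upgrade the pointwise derivative of Lemma~\ref{differentiation of V} to uniform-norm convergence using smoothness and compactness of $\Sb^{d-1}$, and pass the limit through $l$ by continuity. The paper's proof is terser---it simply asserts $\|V_{e^{sA}}b-b-s\cdot\pi(A)b\|_\infty=o(s)$ as a consequence of smoothness---whereas you spell out the justification via uniform continuity of $\partial_s F$ on $[-1,1]\times\Sb^{d-1}$; but the underlying argument is the same.
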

\begin{proof} Let $b \in \mathrm{Poly}(\Sb^{d-1})$.
Since polynomials are smooth, from Lemma \ref{differentiation of V} it follows that
$$\Big\|V_{e^{sA}}b-b-s\cdot\pi(A)b\Big\|_{\infty}=o(s),\quad s\to0.$$
Additionally since $l$ is continuous, we obtain
$$l(V_{e^{sA}}b)-l(b)-s\cdot l(\pi(A)b)=o(s),\quad s\to0.$$
By assumption, $l(V_{e^{sA}}b)=l(b).$ Thus,
$$s\cdot l(\pi(A)b)=o(s),\quad s\to0.$$
This completes the proof.
\end{proof}

\begin{lem}\label{first reduction lemma} Let $l\in C(\Sb^{d-1})^*$ be such that $l\circ V_g=l,$ $g\in {\rm Sp}(d,\mathbb{R}).$ For every ${\bf n}\in\mathbb{Z}^d_+,$ we have
$$l(b_{{\bf n}+2e_{2k-1}}) = \frac{n_{2k-1}+1}{n_{2k}+1}l(b_{{\bf n}+2e_{2k}}),\quad 1\leq k\leq\frac{d}{2}.$$
\end{lem}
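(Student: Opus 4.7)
The plan is to combine Lemma \ref{pass to lie algebra} with two carefully chosen elements of $\mathfrak{sp}(d,\mathbb{R})$, both acting nontrivially only on the $k$-th symplectic $2\times 2$ block. Since the target identity relates $l(b_{{\bf n}+2e_{2k-1}})$ to $l(b_{{\bf n}+2e_{2k}})$, I expect two linear relations to produce a common ``third'' term that can be eliminated, leaving exactly the asserted ratio.

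Concretely, I would introduce $E_k, F_k \in M_d(\mathbb{R})$ which vanish outside the $k$-th block, where they equal $\bigl(\begin{smallmatrix} 0 & 1 \\ 0 & 0\end{smallmatrix}\bigr)$ and $\bigl(\begin{smallmatrix} 0 & 0 \\ 1 & 0\end{smallmatrix}\bigr)$ respectively. A direct check shows $\Omega E_k+E_k^*\Omega=0$ and similarly for $F_k$, so both lie in $\mathfrak{sp}(d,\mathbb{R})$. Using \eqref{spherical gradient explicit} together with Euler's identity $\langle\nabla b_{\bf m},t\rangle=|{\bf m}|\,b_{\bf m}$, formula \eqref{pi explicit} reduces to
$$\pi(A)b_{\bf m} = \langle \nabla b_{\bf m},At\rangle - (|{\bf m}|+d)\langle At,t\rangle b_{\bf m}.$$
The actions of $E_k$ and $F_k$ on the coordinate vector $t$ are $E_kt=t_{2k}e_{2k-1}$ and $F_kt=t_{2k-1}e_{2k}$, while $\langle E_kt,t\rangle=\langle F_kt,t\rangle=t_{2k-1}t_{2k}$. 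A short calculation then gives
\begin{align*}
\pi(E_k)b_{\bf m} &= m_{2k-1}\,b_{{\bf m}-e_{2k-1}+e_{2k}} - (|{\bf m}|+d)\,b_{{\bf m}+e_{2k-1}+e_{2k}},\\
\pi(F_k)b_{\bf m} &= m_{2k}\,b_{{\bf m}+e_{2k-1}-e_{2k}} - (|{\bf m}|+d)\,b_{{\bf m}+e_{2k-1}+e_{2k}}.
\end{align*}

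Specialising to ${\bf m}={\bf n}+e_{2k-1}+e_{2k}$, the shifts collapse nicely: the first summand of $\pi(E_k)b_{\bf m}$ becomes $(n_{2k-1}+1)b_{{\bf n}+2e_{2k}}$, the first summand of $\pi(F_k)b_{\bf m}$ becomes $(n_{2k}+1)b_{{\bf n}+2e_{2k-1}}$, and both second summands equal $(|{\bf n}|+d+2)b_{{\bf n}+2e_{2k-1}+2e_{2k}}$. Applying $l$ and invoking Lemma \ref{pass to lie algebra} yields
$$(n_{2k-1}+1)\,l(b_{{\bf n}+2e_{2k}}) = (|{\bf n}|+d+2)\,l(b_{{\bf n}+2e_{2k-1}+2e_{2k}}) = (n_{2k}+1)\,l(b_{{\bf n}+2e_{2k-1}}),$$
and rearranging gives the asserted identity.

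The only real obstacle is bookkeeping: choosing the ``input'' monomial so that both $\pi(E_k)$ and $\pi(F_k)$ produce the same ``higher'' monomial $b_{{\bf n}+2e_{2k-1}+2e_{2k}}$, which is what allows it to be eliminated. Once that choice ${\bf m}={\bf n}+e_{2k-1}+e_{2k}$ is identified, the rest of the proof is essentially a direct calculation using \eqref{pi explicit} and Lemma \ref{pass to lie algebra}.
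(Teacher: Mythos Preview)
Your proof is correct. The computations with $E_k$ and $F_k$ are accurate, the choice ${\bf m}={\bf n}+e_{2k-1}+e_{2k}$ does force the two ``higher'' terms to coincide, and eliminating them via Lemma \ref{pass to lie algebra} gives the stated identity.

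The paper takes a slightly shorter path: it uses the single antisymmetric element
\[
A=0^{\oplus(2k-2)}\oplus\begin{pmatrix}0&1\\-1&0\end{pmatrix}\oplus0^{\oplus(d-2k)}=E_k-F_k\in\mathfrak{sp}(d,\mathbb{R}),
\]
applied to the same monomial $b_{{\bf n}+e_{2k-1}+e_{2k}}$. Because $A$ is antisymmetric, $\langle At,t\rangle=0$ and the $(|{\bf m}|+d)$-term in \eqref{pi explicit} drops out entirely, yielding directly
\[
\pi(A)b_{{\bf n}+e_{2k-1}+e_{2k}}=(n_{2k-1}+1)b_{{\bf n}+2e_{2k}}-(n_{2k}+1)b_{{\bf n}+2e_{2k-1}},
\]
so one application of Lemma \ref{pass to lie algebra} suffices. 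Your two relations are exactly what one gets by splitting $A=E_k-F_k$; subtracting them reproduces the paper's single relation. The paper's route avoids the elimination step, while yours has the minor bonus of producing, as a byproduct, the identity $(n_{2k-1}+1)\,l(b_{{\bf n}+2e_{2k}})=(|{\bf n}|_1+d+2)\,l(b_{{\bf n}+2e_{2k-1}+2e_{2k}})$, which is a special case of the recursion established in Lemma \ref{second reduction lemma}.
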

\begin{proof} Let
$$
A=0^{\oplus (2k-2)}\oplus
\begin{pmatrix}
0&1\\
-1&0
\end{pmatrix}
\oplus 0^{\oplus (d-2k)}.
$$
It is clear that $A$ commutes with $\Omega$, and since $A^*=-A,$ it follows that $A\Omega + \Omega A^* = 0,$ so $A\in\mathfrak{sp}(d,\mathbb{R}).$ We consider $\pi(A)b_{{\bf n}+e_{2k-1}+e_{2k}}(t).$ Taking into account that $A$ is antisymmetric, we replace the spherical gradient in \eqref{pi explicit} with the usual gradient one and write
$$\pi(A)b_{\bf n+e_{2k-1}+e_{2k}}(t)=\langle\nabla b_{{\bf n}+e_{2k-1}+e_{2k}}(t),At\rangle.$$
Thus,
\begin{align*}
\pi(A)b_{\bf n+e_{2k-1}+e_{2k}}(t) &= (n_{2k-1}+1)t_{2k}b_{{\bf n}+e_{2k}}(t)-(n_{2k}+1)t_{2k-1}b_{{\bf n}+e_{2k-1}}(t)\\
&= (n_{2k-1}+1)b_{{\bf n}+2e_{2k}}(t)-(n_{2k}+1)b_{{\bf n}+2e_{2k-1}}(t).
\end{align*}
Applying $l$ and using Lemma \ref{pass to lie algebra}, we conclude the argument.
\end{proof}

\begin{lem}\label{second reduction lemma} Let $l\in C(\Sb^{d-1})^*$ be such that $l\circ V_g=l,$ $g\in {\rm Sp}(d,\mathbb{R}).$ For every ${\bf n}\in\mathbb{N}^d,$ we have
\begin{equation}\label{main red}
l(b_{{\bf n}+2e_k})=\frac{n_k+1}{\|{\bf n}\|_1+d}l(b_{{\bf n}}),\quad 1\leq k\leq d.
\end{equation}
\end{lem}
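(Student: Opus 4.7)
The plan is to establish that the ratio $c := \frac{l(b_{{\bf n}+2e_k})}{n_k+1}$ does not depend on $k$, and then to pin down its value by means of the sphere identity $1 = \sum_{k=1}^d t_k^2$. If some component $n_k$ is odd then both sides of \eqref{main red} vanish by Lemma \ref{odd lemma}, so I reduce at once to the case that all $n_k$ are even.

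For indices $k, k'$ in the same pair $\{2j-1, 2j\}$, Lemma \ref{first reduction lemma} is exactly the assertion $\frac{l(b_{{\bf n}+2e_{2j-1}})}{n_{2j-1}+1} = \frac{l(b_{{\bf n}+2e_{2j}})}{n_{2j}+1}$. To relate indices from different blocks $j \neq l$ (which is needed only when $d \geq 4$), I will use the antisymmetric element $A = (e_{jl}-e_{lj})\otimes I \in M_d(\Rl)$, where $e_{jl}$ is an elementary $(d/2)\times (d/2)$ matrix and the second factor is the $2\times 2$ identity acting on each $\Omega$-block. A direct computation shows that $A$ commutes with $\Omega$, and combined with $A^* = -A$ this yields $\Omega A + A^*\Omega = 0$, so $A \in \mathfrak{sp}(d,\Rl)$. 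Since $\langle At, t\rangle = 0$ for antisymmetric $A$, formula \eqref{pi explicit} reduces to $\pi(A)b = \langle \nabla b, At\rangle$. Applying Lemma \ref{pass to lie algebra} to the polynomial $b_{{\bf n}+e_{2j-1}+e_{2l-1}}$, the four terms of $\langle \nabla b_{{\bf n}+e_{2j-1}+e_{2l-1}}, At\rangle$ are two monomials of the desired form, namely $(n_{2j-1}+1)\,b_{{\bf n}+2e_{2l-1}}$ and $-(n_{2l-1}+1)\,b_{{\bf n}+2e_{2j-1}}$, together with two ``cross'' monomials carrying odd exponents at all four positions $2j-1, 2j, 2l-1, 2l$. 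Lemma \ref{odd lemma} annihilates the cross terms, leaving
\begin{equation*}
(n_{2j-1}+1)\,l(b_{{\bf n}+2e_{2l-1}}) - (n_{2l-1}+1)\,l(b_{{\bf n}+2e_{2j-1}}) = 0.
\end{equation*}
Combined with Lemma \ref{first reduction lemma}, this establishes the sought independence $\frac{l(b_{{\bf n}+2e_k})}{n_k+1} = c$ for all $k$.

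To finish, on $\Sb^{d-1}$ one has $b_{\bf n} = \sum_{k=1}^d b_{{\bf n}+2e_k}$ from the identity $1 = \sum_k t_k^2$. Applying $l$ and using the common value $c$ gives
\begin{equation*}
l(b_{\bf n}) = \sum_{k=1}^d l(b_{{\bf n}+2e_k}) = c \sum_{k=1}^d (n_k + 1) = c\,(\|{\bf n}\|_1 + d),
\end{equation*}
so $c = l(b_{\bf n})/(\|{\bf n}\|_1 + d)$ as required. I expect the cross-block step to be the main obstacle: the most natural symmetric element $A = e_{2j-1,2j-1} - e_{2j,2j} \in \mathfrak{sp}$ only produces the relation $(n_{2j-1}-n_{2j})\,l(b_{\bf n}) = (\|{\bf n}\|_1+d)\,(l(b_{{\bf n}+2e_{2j-1}}) - l(b_{{\bf n}+2e_{2j}}))$, which degenerates to $0 = 0$ when $n_{2j-1} = n_{2j}$ and leaves too many unknowns whenever several blocks share that equality. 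Passing to antisymmetric cross-block generators from the maximal compact subgroup $U(d/2) \subset \Sp(d,\Rl)$ is precisely what supplies the missing relations. For $d = 2$ there is only a single block and this subtlety does not arise: Lemma \ref{first reduction lemma} and the sphere identity already suffice.
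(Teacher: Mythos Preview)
Your proof is correct and takes a genuinely different route from the paper's. The paper works one symplectic block at a time, applying $\pi(B)$ with the symmetric element $B = 0^{\oplus(2k-2)} \oplus \mathrm{diag}(1,-1) \oplus 0^{\oplus(d-2k)} \in \mathfrak{sp}(d,\Rl)$ to $b_{\bf n}$ to obtain precisely the relation you flag at the end,
\[
(n_{2k-1}-n_{2k})\,l(b_{\bf n}) = (\|{\bf n}\|_1+d)\bigl(l(b_{{\bf n}+2e_{2k-1}}) - l(b_{{\bf n}+2e_{2k}})\bigr),
\]
and then combines it with Lemma~\ref{first reduction lemma} to deduce \eqref{main red} when $n_{2k-1} \neq n_{2k}$. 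The degenerate case $n_{2k-1} = n_{2k}$ is handled separately by a shift trick: one passes through ${\bf m} = {\bf n}+2e_{2k-1}-2e_{2k}$, which lies in the non-degenerate regime, and bootstraps back. Your argument sidesteps this case split entirely: the cross-block antisymmetric generators of $U(d/2) \subset \Sp(d,\Rl)$ give the $k$-independence of $l(b_{{\bf n}+2e_k})/(n_k+1)$ directly, and the sphere identity $\sum_k t_k^2 = 1$ then pins down the common value. This is cleaner and more uniform; it is mildly striking that the paper never invokes the sphere identity here. The trade-off is that the paper's argument, for each fixed $k$, stays entirely within the block containing $k$ and so in principle uses a smaller piece of $\mathfrak{sp}(d,\Rl)$, whereas your argument (for $d\geq 4$) genuinely needs the cross-block elements. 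For the application to Theorem~\ref{main symplectic thm} either suffices.
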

\begin{proof} Set
$$B=0^{\oplus (2k-2)}\oplus
\begin{pmatrix}
1&0\\
0&-1
\end{pmatrix}
\oplus 0^{\oplus (d-2k)}.
$$
It is easy to see that $B\Omega = -\Omega B$, so $B\in\mathfrak{sp}(d,\mathbb{R}).$ It is now clear from \eqref{spherical gradient explicit} that
$$(\nabla_{\mathbb{S}^{d-1}} b_{{\bf n}})(t)=(\nabla b_{{\bf n}})(t)-\|{\bf n}\|_1b_{{\bf n}}(t)t.$$
Substituting this into \eqref{pi explicit}, we obtain
$$(\pi(B)b_{\bf n})(t)=(n_{2k-1}-n_{2k})b_{{\bf n}}(t)-(d+\|{\bf n}\|_1)(t_{2k-1}^2-t_{2k}^2)b_{{\bf n}}(t).$$
In other words,
$$\pi(B)b_{\bf n}=(n_{2k-1}-n_{2k})b_{{\bf n}}-(d+\|{\bf n}\|_1)(b_{{\bf n}+2e_{2k-1}}-b_{{\bf n}+2e_{2k}}).$$

Applying $l$ and using Lemma \ref{pass to lie algebra}, we obtain
\begin{equation}\label{red2}
(n_{2k-1}-n_{2k})l(b_{{\bf n}})-(d+\|{\bf n}\|_1)(l(b_{{\bf n}+2e_{2k-1}})-l(b_{{\bf n}+2e_{2k}}))=0.
\end{equation}

Suppose first that $n_{2k-1}\neq n_{2k}.$ In this case, it follows from Lemma \ref{first reduction lemma} that
$$(n_{2k-1}-n_{2k})l(b_{\bf n})=(d+\|{\bf n}\|_1)\left(\frac{n_{2k-1}+1}{n_{2k}+1}-1\right)l(b_{{\bf n}+2e_{2k}}).$$
Since $n_{2k-1}\neq n_{2k},$ it follows that
\begin{equation}\label{even case red}
l(b_{\bf n})=\frac{d+\|{\bf n}\|_1}{n_{2k}+1}l(b_{{\bf n}+2e_{2k}}).
\end{equation}
Using Lemma \ref{first reduction lemma}, we obtain that
\begin{equation}\label{odd case red}
l(b_{\bf n})=\frac{d+\|{\bf n}\|_1}{n_{2k-1}+1}l(b_{{\bf n}+2e_{2k-1}}).
\end{equation}
A combination of \eqref{even case red} and \eqref{odd case red} yields the assertion for the case when $n_{2k-1}\neq n_{2k}.$

The final case to consider is $n_{2k} = n_{2k-1}.$ By assumption, $n_{2k-1},n_{2k}>0$ and we may exclude the case $n_{2k-1}=n_{2k}=1$ by Lemma \ref{odd lemma}. Hence assume $n_{2k-1}=n_{2k}\geq 2.$ Let ${\bf m}:={\bf n}+2e_{2k-1}-2e_{2k}.$ Then, $m_{2k} \neq m_{2k-1},$ so by the previous case,
\begin{equation}\label{even case red m}
l(b_{{\bf m}+2e_{2k}})=\frac{m_{2k}+1}{d+\|{\bf m}\|_1}l(b_{\bf m}).
\end{equation}
Now, let ${\bf p}={\bf n}-2e_{2k}$ so that ${\bf n}={\bf p}+2e_{2k}$ and ${\bf m}={\bf p}+2e_{2k-1}.$ By Lemma \ref{first reduction lemma}, we have
\begin{equation}\label{m to n}
l(b_{\bf m})=l(b_{{\bf p}+2e_{2k-1}})=\frac{p_{2k-1}+1}{p_{2k}+1}l(b_{{\bf p}+2e_{2k}})=\frac{n_{2k-1}+1}{n_{2k}-1}l(b_{{\bf n}}).
\end{equation}
Recalling that ${\bf n}+2e_{2k-1}={\bf m}+2e_{2k},$ we obtain,
$$l(b_{{\bf n}+2e_{2k-1}})=l(b_{{\bf m}+2e_{2k}})\stackrel{\eqref{even case red m}}{=}\frac{m_{2k}+1}{d+\|{\bf m}\|_1}l(b_{\bf m})\stackrel{\eqref{m to n}}{=}\frac{n_{2k}-1}{d+\|{\bf n}\|_1}\cdot \frac{n_{2k-1}+1}{n_{2k}-1}l(b_{{\bf n}}).$$
This equality is exactly \eqref{odd case red} for the case when $n_{2k}=n_{2k-1}.$ A similar argument delivers \eqref{even case red} for the case when $n_{2k}=n_{2k-1}.$ This proves the assertion for the case when $n_{2k}=n_{2k-1}.$
\end{proof}

We now have all the results required to prove Theorem \ref{main symplectic thm}.

\begin{proof}[Proof of Theorem \ref{main symplectic thm}] Without loss of generality, we may assume that $l(b_{{\bf n}^0})=0,$ where ${\bf n}^0=(2,\cdots,2)$ (otherwise we consider $l-{\rm const}\cdot m$). Let ${\bf n}\in\mathbb{N}^d.$ If some $n_k$ is odd, then $l(b_{{\bf n}})=0$ by Lemma \ref{odd lemma}. If every $n_k$ is even, then $l(b_{{\bf n}})=0$ by the assumption and Lemma \ref{second reduction lemma}.

Let $b\in C^{\infty}(\mathbb{S}^{d-1})$ vanish on every equator $\{t_k=0\}.$ We write
$$b(t)=(\prod_{k=1}^dt_k)\cdot h(t),\quad h\in C(\mathbb{S}^{d-1}).$$
Since $h$ can be approximated by polynomials, it follows that $b$ can be approximated by a linear combination of $b_{{\bf n}},$ ${\bf n}\in\mathbb{N}^d.$ By the preceding paragraph, $l(b)=0.$

Let the function $b\in C(\mathbb{S}^{d-1})$ vanish on every equator $\{t_k=0\}.$ By the preceding paragraph and continuity of $l,$ we have that $l(b)=0.$ It follows from the Riesz theorem that $l$ is the measure $\nu$ supported on the union of all equators.

For every $g\in g\in {\rm Sp}(d,\mathbb{R}),$ let $B_g:\mathbb{S}^{d-1}\to\mathbb{S}^{d-1}$ be given by the formula $t\to \frac{g^{-1}t}{|g^{-1}t|}.$ Since $l=l\circ V_g,$ it follows that $\nu\circ B_g$ is also supported on the union of all equators. In other words, $\nu$ is supported on the set
$$\bigcup_{k=1}^d\{(gt)_k=0\}.$$
Recall that the intersection of $d$ generic hyperplanes is $\{0\}$ (which, obviously, does not belong to the sphere). Thus, one can choose a finite collection $\{g_i\}_{i\in\mathbb{I}}\subset{\rm Sp}(d,\mathbb{R})$ such that
$$\bigcap_{i\in\mathbb{I}}(\bigcup_{k=1}^d\{(gt)_k=0\})=\varnothing.$$
Hence, $\nu$ is nowhere supported and, therefore, $l=0.$
\end{proof}

\section{$q(\mathcal{A}_2)$ is $C(\Sb^2)$}\label{identification of sphere algebra}

Let $q:\mathcal{L}(L_2({\rm SU}(2)))\to\mathcal{Q}(L_2({\rm SU}(2)))$ be the canonical quotient map. Recall
that we define $b_k = \frac{D_k}{\sqrt{-\Delta}}$, set $b_k(1) = \frac{1}{\sqrt{3}}$.
Since for each $j,k = 1,2,3$ we have that $[b_j,b_k]$ is compact, it follows that $[q(b_j),q(b_k)] = 0$,
so the algebra $q(\A_2)$ is commutative, and generated by three commuting self-adjoint
elements $q(b_1),q(b_2),q(b_3)$ satisfying $q(b_1)^2+q(b_2)^2+q(b_3)^2 = 1$.

The algebra $C(\Sb^2)$ is the universal $C^*-$algebra generated by self-adjoint commuting elements $t_k,$ $1\leq k\leq 3,$ satisfying the condition $t_1^2+t_2^2+t_3^2=1.$ Hence, the mapping $t_k\to q(b_k),$ $1\leq k\leq 3,$ extends to a surjective $*-$homomorphism $u:C(\Sb^2)\to q(\A_2).$
This section is devoted to the proof of the following:
\begin{thm}\label{isometry thm} The mapping $u:C(\Sb^2)\to q(\A_2)$ is an isometric $*$-isomorphism.
\end{thm}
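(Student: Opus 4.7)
The plan is to show that $u$ is injective; for a $*$-homomorphism of $C^*$-algebras, injectivity automatically upgrades to isometry. Equivalently, by Gelfand duality applied to the commutative unital $C^*$-algebras involved, the dual map
\[
\hat u\colon \mathrm{Spec}(q(\A_2))\to\Sb^2,\quad \chi\mapsto(\chi(q(b_1)),\chi(q(b_2)),\chi(q(b_3))),
\]
must be surjective. The image of $\hat u$ is automatically a closed subset of $\Sb^2$ (continuous image of a compact space in a Hausdorff space) and is nonempty, because $q(\A_2)$ is a nonzero unital commutative $C^*$-algebra and hence has characters. Since $\SO(3)$ acts transitively on $\Sb^2$, the only closed $\SO(3)$-invariant subsets of $\Sb^2$ are $\varnothing$ and $\Sb^2$; hence it suffices to show that $\hat u(\mathrm{Spec}(q(\A_2)))$ is $\SO(3)$-invariant.

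The heart of the argument is the conjugation identity $\lambda_l(h)D_k\lambda_l(h)^{-1}=\sum_j\eta(h)_{jk}D_j$ for all $h\in\SU(2)$, where $\eta\colon\SU(2)\to\SO(3)$ is the standard double cover. This follows from $\lambda_l(h)\exp(it\sigma_k)\lambda_l(h)^{-1}=\lambda_l(\exp(ith\sigma_k h^{-1}))$ together with the identification of the adjoint representation of $\SU(2)$ on $\mathfrak{su}(2)=\mathrm{span}\{i\sigma_1,i\sigma_2,i\sigma_3\}$ with $\eta$. Since $\Delta$ commutes with each $\lambda_l(h)$, the identity passes from $D_k$ to $b_k$, so conjugation by $\lambda_l(h)$ preserves $\A_2$ and descends to a $*$-automorphism $\alpha_h$ of $q(\A_2)$ satisfying $\alpha_h(q(b_k))=\sum_j\eta(h)_{jk}q(b_j)$. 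A direct dualization then gives $\hat u(\alpha_h^*\chi)=\eta(h)^T\hat u(\chi)$; as $h$ ranges over $\SU(2)$, $\eta(h)^T$ ranges over $\SO(3)$, so the image of $\hat u$ is $\SO(3)$-invariant and the proof is complete.

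The only substantive step is verifying the conjugation rule on the $D_k$, which is an elementary $\SU(2)$ representation-theoretic computation but requires careful bookkeeping with the specific Pauli convention of \eqref{pauli matrices} (so that the adjoint action on $\mathrm{span}\{i\sigma_k\}$ matches the standard $\SO(3)$-action). Everything else is a formal application of Gelfand duality combined with transitivity of the $\SO(3)$-action on $\Sb^2$; in particular, there is no need to construct Weyl sequences or to analyze the joint essential spectrum of $(b_1,b_2,b_3)$ directly.
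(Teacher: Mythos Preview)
Your argument is correct and is genuinely different from the paper's. Both proofs hinge on the conjugation identity $\lambda_l(h)b_k\lambda_l(h)^{-1}=\sum_j\eta(h)_{jk}b_j$ (this is the paper's Lemma \ref{eta transformation}), but they exploit it very differently. The paper works analytically: it builds a conditional expectation $\Ec$ onto the commutative algebra generated by the joint spectral projections of $D_1$ and $\Delta$, computes $\Ec(b_1^{n_1}b_2^{n_2}b_3^{n_3})$ explicitly modulo compacts (Lemma \ref{odd computational lemma}), matches this with the classical conditional expectation on $C(\Sb^2)$ onto functions of $t_1$ (Lemma \ref{sphere motivation}, Corollary \ref{u intertwining}), and combines this intertwining with the fact that the eigenvalues of $b_1$ are dense in $[-1,1]$ to obtain the pointwise lower bound $\|u(f)\|\geq |f(1,0,0)|$ (Lemma \ref{sphere key estimate}); only then is the $\SO(3)$-action invoked to move the point $(1,0,0)$ around the sphere. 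Your route bypasses all of this computation: you pass directly to Gelfand spectra, observe that the conjugation automorphisms $\alpha_h$ make the image of $\hat u$ an $\SO(3)$-invariant nonempty closed subset of $\Sb^2$, and invoke transitivity. This is shorter and conceptually cleaner, and it avoids any appeal to the explicit spectrum of $b_1$ or to the representation theory of $\SU(2)$ beyond the adjoint action. What the paper's approach buys in exchange is the intertwining relation $q(\Ec(x))=u(\mathbb{E}(f))$ itself, which is of independent interest and could be useful elsewhere; your argument does not produce this. One small bookkeeping point: the paper's convention in Lemma \ref{g conjugation} is $\lambda_l(g)D_j\lambda_l(g)^{-1}=\sum_k(\eta(g))_{k,j}D_k$, so your $\eta(h)_{jk}$ matches the paper's $(\eta(h))_{j,k}$ with the index order swapped; this does not affect the argument since either way the resulting matrices range over all of $\SO(3)$.
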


We introduce a mapping $\Ec$, which is defined
to be the conditional expectation operator from $\Bc(L_2(\SU(2)))$ to the sub-algebra
generated by the joint spectral projections of $D_1$ and $\Delta$.
The operator $\Delta$ has compact resolvent, and commutes with $D_1$. Hence
there is a sequence $\{p_k\}_{k=0}^\infty$ of finite rank projections onto the eigenspaces for $D_1$ and $\Delta$.
We may write $\Ec$ in terms of the sequence $\{p_k\}_{k=0}^\infty$,
\begin{equation*}
    \Ec(T) = \sum_{k=0}^\infty p_kTp_k
\end{equation*}
where the sum is weakly convergent. Since the ranges of each $p_k$
are mutually orthogonal, we also have $\|\Ec(T)\| \leq \|T\|$. 
Since the spectral projections of $\Delta$ span all of $L_2(\SU(2))$, 
it follows that $\Ec$ is trace preserving. Thus $T$ maps rank one operators to trace class operators, and so by
continuity $T$ maps compact operators to compact operators.
%
%

Since $\Ec$ is the conditional expectation onto the algebra generated by the joint spectral projections
of $D_1$ and $\Delta$, we have that $\Ec(b_1T) = b_1\Ec(T)$ and $\Ec(Tb_1) = \Ec(T)b_1$.

There is a canonical surjective group homomorphism $\eta:\SU(2)\to\SO(3)$ defined as follows.
\begin{equation}\label{eta def}
(\eta(g))_{k,j} = \frac{1}{2}\tr(g\sigma_kg^*\sigma_j).
\end{equation}

For example, we have
\begin{equation}\label{sigma_1 matrix}
\eta(e^{it\sigma_1})=
\begin{pmatrix}
1 & 0 & 0\\
0 & \cos(2t) & \sin(2t)\\
0 & -\sin(2t) & \cos(2t)
\end{pmatrix}.
\end{equation}

The relationship between $\eta$ and the operators $D_j$ is detailed in the following:
\begin{lem}\label{g conjugation}\label{eta transformation}
Let $j = 1,2,3$. We have:
\begin{equation*}
\lambda_l(g)D_j\lambda_l(g)^{-1} = \sum_{k=1}^3 (\eta(g))_{k,j}D_k.
\end{equation*}
Furthermore, for any $f\in C(\mathbb{S}^2),$ we have
\begin{equation*}
\lambda_l(g)u(f)\lambda_l(g)^{-1} = u(f\circ \eta(g)).
\end{equation*} 
\end{lem}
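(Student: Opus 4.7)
The plan is to reduce both identities to the single algebraic fact that conjugation of the Pauli matrices by $g \in \SU(2)$ is encoded by $\eta$. Expanding $g\sigma_j g^{-1}$ in the basis $\{\sigma_1,\sigma_2,\sigma_3\}$ and taking the trace pairing against $\sigma_k$ (using that the $\sigma_k$ are self-adjoint, trace-zero, and orthonormal up to a factor of $2$ under $\langle A,B\rangle = \tfrac{1}{2}\tr(A^*B)$) gives an expansion whose coefficients match the definition \eqref{eta def} of $\eta$. This is the only non-trivial input; everything else is bookkeeping.

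For the first identity, I would use that $\lambda_l$ is a unitary representation to transfer conjugation from $\Bc(L_2(\SU(2)))$ to $\SU(2)$:
\begin{equation*}
\lambda_l(g)\lambda_l(\exp(it\sigma_j))\lambda_l(g)^{-1} = \lambda_l(\exp(it\, g\sigma_j g^{-1})).
\end{equation*}
Expanding $g\sigma_j g^{-1}$ in the Pauli basis and invoking linearity of the derived Lie-algebra map (which sends $i\sigma_k$ to $iD_k$), the right-hand side becomes $\exp(it\sum_k \eta(g)_{k,j} D_k)$. Both sides are strongly continuous one-parameter unitary groups, so Stone's theorem forces equality of their self-adjoint generators, yielding the first identity. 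The unboundedness of $D_j$ is the reason we read off the generators from the group identity rather than differentiating naively.

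For the second identity, I would first observe that $\Delta$ commutes with $\lambda_l(g)$ (since $\Delta = -(D_1^2+D_2^2+D_3^2)$ and each $D_k$ generates a one-parameter subgroup of $\lambda_l$), so $b_j = D_j(-\Delta)^{-1/2}$ inherits the transformation law from the first part:
\begin{equation*}
\lambda_l(g) b_j \lambda_l(g)^{-1} = \sum_k \eta(g)_{k,j}\, b_k.
\end{equation*}
Passing to the Calkin algebra and using the identification $u(t_j) = q(b_j)$, this reads $\lambda_l(g)u(t_j)\lambda_l(g)^{-1} = u(t_j \circ \eta(g))$, which is the target identity on the coordinate functions. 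Now both sides of the target, regarded as functions of $f \in C(\Sb^2)$, are norm-continuous $*$-homomorphisms into $\Bc(L_2(\SU(2)))$; hence their agreement on the generators $t_1, t_2, t_3$ extends by the homomorphism property to all polynomials in these coordinates, and then by Stone--Weierstrass density to all of $C(\Sb^2)$.

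The main technical obstacle is the Lie-algebra step for unbounded $D_j$; this is handled cleanly by working throughout at the group level and applying Stone's theorem at the end, rather than attempting term-by-term differentiation of operator-valued functions. A minor point to be careful about is the indexing convention in the expansion of $g\sigma_j g^{-1}$, which must be read off consistently with the specific order of the factors in \eqref{eta def}.
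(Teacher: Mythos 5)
Your proof is correct, and the first identity is established by a genuinely different route from the paper. The paper derives $\lambda_l(g)D_j\lambda_l(g)^{-1}=\sum_k(\eta(g))_{k,j}D_k$ by writing down an explicit formula for the action of $D_j$ on smooth functions, namely
\begin{equation*}
(\lambda_l(g)D_j\lambda_l(g)^{-1}\xi)(h)=-\langle\nabla\xi(h),\,g\sigma_j g^*h\rangle,
\end{equation*}
and then substituting the Pauli expansion $g\sigma_j g^*=\sum_k(\eta(g))_{k,j}\sigma_k$ into the right-hand side; the identity is read off immediately by linearity. Your version instead stays entirely at the group level: the identity $\lambda_l(g)\lambda_l(\exp(it\sigma_j))\lambda_l(g)^{-1}=\lambda_l(\exp(it\,g\sigma_j g^{-1}))$, together with linearity of the derived Lie-algebra representation and the uniqueness of self-adjoint generators of strongly continuous one-parameter unitary groups (Stone's theorem), give the same conclusion. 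Your route is conceptually clean and correctly avoids term-by-term differentiation of unbounded operators, but it leans on the (standard, though not spelled out) fact that the derived representation $X\mapsto d\lambda_l(X)$ is $\Rl$-linear on a common invariant core of smooth vectors and that $\exp(t\,d\lambda_l(X))=\lambda_l(\exp tX)$; the paper's direct computation is more self-contained in that it requires no Lie-theoretic machinery beyond the explicit action of the generators. For the second identity your argument is essentially the paper's: deduce the transformation law for $b_j$ (using that $\Delta$ commutes with $\lambda_l(g)$ and with each $D_k$), pass to the Calkin quotient, identify $u(t_j)=q(b_j)$, and then extend from coordinate functions to all of $C(\Sb^2)$ by the $*$-homomorphism property plus Stone--Weierstrass. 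Two minor slips: under $\langle A,B\rangle=\tfrac12\tr(A^*B)$ the Pauli matrices are orthonormal outright, not merely orthonormal up to a factor of $2$; and you are right to flag the indexing in the Pauli expansion -- with the definition \eqref{eta def} the expansion comes out as $g\sigma_jg^*=\sum_k(\eta(g))_{j,k}\sigma_k$, which matches $\sum_k(\eta(g))_{k,j}D_k$ only after the appropriate transpose is absorbed, so the bookkeeping you caution about is a genuine point of care.
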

\begin{proof} For $j = 1,2,3$,
\begin{equation}\label{matrix elements}
g\sigma_jg^* = \sum_{k=1}^3 (\eta(g))_{k,j}\sigma_k.
\end{equation}

By definition, given $\xi \in C^\infty(\SU(2))$, $g,h \in \SU(2)$ and $j = 1,2,3$ we have
\begin{equation*}
    (\lambda_l(g)D_j\lambda_l(g)^{-1}\xi)(h) =-\langle \nabla \xi(h),g\sigma_jg^*h\rangle
\end{equation*}
Here $\nabla$ denotes above the gradient on $\SU(2)$. Now applying \eqref{matrix elements},
\begin{equation*}
(\lambda_l(g)D_j\lambda_l(g)^{-1}\xi)(h) =-\sum_{k=1}^3 (\eta(g))_{k,j}\langle \nabla \xi(h),\sigma_k\rangle.
\end{equation*} 
Thus
\begin{equation*}
    \lambda_l(g)D_j\lambda_l(g)^{-1} = \sum_{k=1}^3 (\eta(g))_{k,j}D_{k}.
\end{equation*}
This prove the first part of the Lemma.

Since $\Delta$ commutes with each $D_j$ and $\lambda_l(g)$, 
\begin{equation}\label{eta bj}
    \lambda_l(g)b_j\lambda_l(g)^{-1} = \sum_{k=1}^3 (\eta(g))_{k,j}b_{k}.
\end{equation}

Consider now the continuous $*-$homomorphisms from $C(\mathbb{S}^2)$ to $q(\mathcal{A}_2)$ defined by the formulae
\begin{equation*}
\varpi_1:f \mapsto \lambda_l(g)u(f)\lambda_l(g)^{-1},\quad \varpi_2:f\mapsto u(f\circ \eta(g))
\end{equation*}
By \eqref{eta bj}, we have $\varpi_1(t_j)=\varpi_2(t_j),$ $j=1,2,3.$ Since functions $t\to t_j,$ $j = 1,2,3$ generate $C(\Sb^2)$, the result follows.
\end{proof}

The following Lemma is used in the proof of Theorem \ref{isometry thm}.
\begin{lem}\label{sphere motivation} Let $\mathbb{E}:L_{\infty}(\Sb^2)\to L_{\infty}(\Sb^2)$ be the conditional expectation onto the subalgebra generated by $t_1.$ We have
\begin{enumerate}
\item If $n_2$ or $n_3$ is odd, then
$$\mathbb{E}(t_2^{n_2}t_3^{n_3})=0.$$
\item If both $n_2$ and $n_3$ are even, then
$$\mathbb{E}(t_2^{n_2}t_3^{n_3})=\frac1{\pi}B\left(\frac{n_2+1}{2},\frac{n_3+1}{2}\right)(1-t_1^2)^{\frac{n_2+n_3}{2}}.$$
Here $B$ is the Beta function.
\end{enumerate}
\end{lem}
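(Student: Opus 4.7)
The plan is to identify $\mathbb{E}$ as an averaging operation over the one-parameter family of rotations fixing the $t_1$-axis, and then reduce both claims to a single trigonometric integral. Concretely, parametrize $\Sb^2$ (off the two poles, which form a null set) by $(t_1,\phi)\mapsto (t_1,\sqrt{1-t_1^2}\cos\phi,\sqrt{1-t_1^2}\sin\phi)$ with $t_1\in(-1,1)$ and $\phi\in[0,2\pi)$. With respect to the rotation-invariant normalized surface measure on $\Sb^2$, these coordinates factor as a product of a measure in $t_1$ and $\frac{d\phi}{2\pi}$. The subalgebra generated by $t_1$ corresponds to functions depending only on $t_1$, and so the (unique, trace-preserving) conditional expectation $\mathbb{E}$ onto it is given by averaging over $\phi$. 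In particular,
\begin{equation*}
    \mathbb{E}(t_2^{n_2}t_3^{n_3})=(1-t_1^2)^{(n_2+n_3)/2}\cdot\frac{1}{2\pi}\int_0^{2\pi}\cos^{n_2}(\phi)\sin^{n_3}(\phi)\,d\phi.
\end{equation*}

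For part (1), if $n_2$ is odd then the substitution $\phi\mapsto\pi-\phi$ flips the sign of $\cos^{n_2}(\phi)\sin^{n_3}(\phi)$ while preserving the measure on $[0,2\pi)$, so the integral vanishes; if instead $n_3$ is odd, the substitution $\phi\mapsto -\phi$ does the same job. Either way $\mathbb{E}(t_2^{n_2}t_3^{n_3})=0$.

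For part (2), suppose both $n_2$ and $n_3$ are even. Then the integrand has period $\pi$ and is symmetric under $\phi\mapsto\pi-\phi$ and under $\phi\mapsto -\phi$, so folding the integral over $[0,2\pi)$ to $[0,\pi/2]$ produces a factor of $4$. Applying the standard Beta-function identity $B(x,y)=2\int_0^{\pi/2}\cos^{2x-1}(\phi)\sin^{2y-1}(\phi)\,d\phi$ with $x=(n_2+1)/2$ and $y=(n_3+1)/2$ then gives
\begin{equation*}
    \frac{1}{2\pi}\int_0^{2\pi}\cos^{n_2}(\phi)\sin^{n_3}(\phi)\,d\phi=\frac{2}{\pi}\int_0^{\pi/2}\cos^{n_2}(\phi)\sin^{n_3}(\phi)\,d\phi=\frac{1}{\pi}B\!\left(\frac{n_2+1}{2},\frac{n_3+1}{2}\right),
\end{equation*}
which combined with the factored formula above yields exactly the stated expression.

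There is no serious obstacle here: the computation is entirely classical. The only care needed is to justify the product-measure factorization of the surface measure on $\Sb^2$ in cylindrical coordinates about the $t_1$-axis (a standard computation in spherical coordinates) and to track the numerical constants correctly when reducing the full-period integral to a half-interval Beta integral.
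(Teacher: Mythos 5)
Your proposal is correct and rests on the same core computation as the paper's proof: decomposing $\Sb^2$ into latitude circles orthogonal to the $t_1$-axis, reducing to a one-dimensional trigonometric integral, and evaluating it via the standard Beta-function identity, with reflection symmetry handling the odd case. The one presentational difference is that you first identify $\mathbb{E}$ explicitly as the fiber average $\frac{1}{2\pi}\int_0^{2\pi}\cdot\,d\phi$ (exploiting the product disintegration of the surface measure), and only then evaluate the circle integral; the paper instead computes the full moments $\int_{\Sb^2}t_1^{n_1}t_2^{n_2}t_3^{n_3}\,dt$ for all $n_1$ and reads off $\mathbb{E}(t_2^{n_2}t_3^{n_3})$ from the defining duality of the conditional expectation against polynomials in $t_1$. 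Your route is marginally more transparent in that it makes the structure of $\mathbb{E}$ explicit up front rather than inferring it from the moment identities, but the underlying calculation is the same. Note also that your choice $t_2=\sqrt{1-t_1^2}\cos\phi$, $t_3=\sqrt{1-t_1^2}\sin\phi$ swaps the roles of $\sin$ and $\cos$ relative to the paper's parametrization; this is harmless since $B(x,y)=B(y,x)$.
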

\begin{proof}
We compute the integrals,
\begin{equation*}
   \int_{\Sb^2} t_1^{n_1}t_2^{n_2}t_3^{n_3}\,dt,\quad n_1,n_2,n_3 \geq 0.
\end{equation*}
It is easy to see by reflecting around the $t_2$ or $t_3$ coordinate that this integral vanishes when one of $\{n_2,n_3\}$ is odd. This proves the first claim.
%

%
 To see the second claim, we pass to the spherical coordinates:
 $$t_1=\cos(\theta),\quad t_2=\sin(\theta)\sin(\phi),\quad t_3=\sin(\theta)\cos(\phi).$$
 For $\phi\in(0,2\pi),$ $\theta\in(0,\pi)$ and we have $dt=\sin(\theta)d\theta d\phi.$
 Thus,
 \begin{align*}
     \int_{\Sb^2}t_1^{n_1}t_2^{n_2}t_3^{n_3}dt &= \int_0^{\pi}\int_0^{2\pi}\cos^{n_1}(\theta)\sin^{n_2+n_3+1}(\theta)\sin^{n_2}(\phi)\cos^{n_3}(\phi)d\phi d\theta\\
                                               &= \int_0^{\pi}\cos^{n_1}(\theta)\sin^{n_2+n_3+1}(\theta)d\theta\cdot \int_0^{2\pi}\sin^{n_2}(\phi)\cos^{n_3}(\phi)d\phi.
 \end{align*}
 
 For the second part, assume that both $n_2$ and $n_3$ are even. We have that:
 $$\int_0^{2\pi}\sin^{n_2}(\phi)\cos^{n_3}(\phi)d\phi=2B\left(\frac{n_2+1}{2},\frac{n_3+1}{2}\right).$$
 (see, for example, \cite[6.2.1]{Abramowitz-Stegun}).
 Thus,
 \begin{align*}
    \int_{\Sb^2}t_1^{n_1}t_2^{n_2}t_3^{n_3}dt &= \frac1{\pi}B(\frac{n_2+1}{2},\frac{n_3+1}{2})\cdot\int_0^{\pi}\int_0^{2\pi}\cos^{n_1}(\theta)\sin^{n_2+n_3+1}(\theta)d\phi d\theta\\
                                              &= \frac1{\pi}B(\frac{n_2+1}{2},\frac{n_3+1}{2})\int_{\Sb^2}t_1^{n_1}(1-t_1^2)^{\frac{n_2+n_3}{2}}dt.
 \end{align*}
 This proves the claim.
 %
 %
\end{proof}

\begin{lem}\label{odd computational lemma} 
We have:
\begin{enumerate}
\item If $n_2$ or $n_3$ is odd, then
$$q(\Ec(b_1^{n_1}b_2^{n_2}b_3^{n_3})) = 0.$$
\item If both $n_2$ and $n_3$ are even, then
$$q(\Ec(b_1^{n_1}b_2^{n_2}b_3^{n_3})) =\frac1{\pi}B\left(\frac{n_2+1}{2},\frac{n_3+1}{2}\right)q(b_1)^{n_1}(1-q(b_1)^2)^{\frac{n_2+n_3}{2}}.$$
Here $B$ is the Beta function.
\end{enumerate}
\end{lem}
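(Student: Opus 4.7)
The plan is to reduce, via rotational invariance around the $b_1$-axis, to a commutative computation that mirrors the classical sphere integral carried out in Lemma \ref{sphere motivation}. Since $b_1 = D_1(-\Delta)^{-1/2}$ commutes with each joint spectral projection $p_k$ of $D_1$ and $\Delta$, one first observes $\Ec(b_1 T) = b_1 \Ec(T)$ and $\Ec(Tb_1) = \Ec(T)b_1$, so that
\begin{equation*}
\Ec(b_1^{n_1}b_2^{n_2}b_3^{n_3}) = b_1^{n_1}\Ec(b_2^{n_2}b_3^{n_3}),
\end{equation*}
and only the factor $\Ec(b_2^{n_2}b_3^{n_3})$ needs further analysis.

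The next step is to introduce the one-parameter group $R_t := \lambda_l(\exp(it\sigma_1))$. Because $R_t$ commutes with both $D_1$ and $\Delta$, it commutes with each $p_k$, so conjugation by $R_t$ commutes with $\Ec$; moreover $\Ec(T)$ itself commutes with $R_t$, yielding $\Ec(R_t T R_t^{-1}) = \Ec(T)$ for every $T$. By Lemma \ref{g conjugation} and \eqref{sigma_1 matrix}, conjugation by $R_t$ fixes $b_1$ and rotates $(b_2,b_3)$ by angle $2t$. Averaging over the circle (with $s=2t$) then gives
\begin{equation*}
\Ec(b_2^{n_2}b_3^{n_3}) = \Ec(T_0),\qquad T_0 := \frac{1}{2\pi}\int_0^{2\pi}(\cos(s)b_2-\sin(s)b_3)^{n_2}(\sin(s)b_2+\cos(s)b_3)^{n_3}\,ds.
\end{equation*}
By construction $T_0$ is $R_t$-invariant and so commutes with $D_1$; as a polynomial in $b_2,b_3$ it commutes with $\Delta$. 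Hence $\Ec(T_0) = T_0$.

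Applying $q$ completes the reduction. Since every $[b_j,b_k]$ is compact, the elements $q(b_1),q(b_2),q(b_3)$ commute in $q(\A_2)$ and satisfy $q(b_1)^2+q(b_2)^2+q(b_3)^2 = 1$. Pushing $q$ through the Bochner integral produces
\begin{equation*}
q(T_0) = \frac{1}{2\pi}\int_0^{2\pi}(\cos(s)q(b_2)-\sin(s)q(b_3))^{n_2}(\sin(s)q(b_2)+\cos(s)q(b_3))^{n_3}\,ds
\end{equation*}
as an identity in the commutative $C^*$-algebra $q(\A_2)$. Evaluating pointwise on the Gelfand spectrum, at a character $\chi$ one sets $r := (1-\chi(q(b_1))^2)^{1/2}$ and writes $\chi(q(b_2)) = r\sin\alpha$, $\chi(q(b_3)) = r\cos\alpha$. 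The integrand becomes $r^{n_2+n_3}\sin^{n_2}(\alpha-s)\cos^{n_3}(\alpha-s)$, and the $2\pi$-periodic substitution $\psi=\alpha-s$ converts the integral to $r^{n_2+n_3}\cdot\frac{1}{2\pi}\int_0^{2\pi}\sin^{n_2}\psi\cos^{n_3}\psi\,d\psi$. Just as in the proof of Lemma \ref{sphere motivation}, this scalar integral vanishes by reflection when $n_2$ or $n_3$ is odd, and equals $\frac{1}{\pi}B\left(\frac{n_2+1}{2},\frac{n_3+1}{2}\right)$ when both are even; combining with the $b_1^{n_1}$ factor extracted earlier delivers both asserted formulas.

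The main subtlety I anticipate is the step $\Ec(T_0)=T_0$: it relies simultaneously on $T_0$ commuting with $D_1$ (a consequence of the rotational averaging) and with $\Delta$ (a consequence of $T_0$ being a polynomial in $b_2,b_3$), and it is the clean interaction of these two commutativity properties that makes the whole approach work. The final scalar integral is classical and essentially duplicates the computation already performed in Lemma \ref{sphere motivation}.
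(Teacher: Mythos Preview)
Your proof is correct but takes a genuinely different route from the paper's. Both arguments begin with the reduction to $n_1=0$ and exploit the rotation $R_t=\lambda_l(e^{it\sigma_1})$ together with the commutativity of $q(\A_2)$, but they diverge after that.

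The paper fixes a single parameter $t$, uses $\Ec(b_3^n)=\Ec((\sin(2t)b_2+\cos(2t)b_3)^n)$, applies $q$ and expands binomially, and then invokes linear independence of the functions $t\mapsto\sin^l(2t)\cos^{n-l}(2t)$ to extract a system of identities among the $(q\Ec)(b_2^lb_3^{n-l})$. From these it derives the recursion $(q\Ec)(b_3^n)=\frac{n-1}{n}(1-q(b_1)^2)(q\Ec)(b_3^{n-2})$, solves it, and matches the resulting combinatorial factor to the Beta function by hand.

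You instead integrate over the circle to produce the rotationally averaged operator $T_0$, observe that $T_0$ commutes with both $D_1$ (from the averaging) and $\Delta$ (as a polynomial in $b_2,b_3$), deduce $\Ec(T_0)=T_0$, and then evaluate $q(T_0)$ character by character on the Gelfand spectrum of the commutative algebra $q(\A_2)$. This reduces the whole computation to the classical trigonometric integral $\frac{1}{2\pi}\int_0^{2\pi}\sin^{n_2}\psi\cos^{n_3}\psi\,d\psi$, which is exactly the scalar already handled in Lemma~\ref{sphere motivation}. Your approach is cleaner in that it avoids the linear-independence argument, the recursion, and the Beta-function bookkeeping; it also yields the slightly stronger intermediate fact $\Ec(b_2^{n_2}b_3^{n_3})=T_0$ at the operator level, before passing to the Calkin quotient. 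The paper's approach, on the other hand, never needs to invoke Gelfand theory or argue pointwise on characters; it stays entirely within algebraic identities in $q(\A_2)$.

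One point worth making explicit in your write-up: the assertion that $\Ec(T)$ commutes with $R_t$ for \emph{every} $T$ holds because $R_t=e^{itD_1}$ lies in the (strongly closed) algebra generated by the joint spectral projections $p_k$ (each $p_k$ is a $D_1$-eigenspace), so anything in the commutant of the $p_k$'s---in particular $\Ec(T)$---commutes with $R_t$. This is what justifies $\Ec(R_tTR_t^{-1})=\Ec(T)$, and it is the linchpin of your averaging step.
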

\begin{proof} Since $\Ec(b_1^{n_1}T) = b_1^{n_1}\Ec(T)$ for any bounded operator $T$,
it suffices to prove the result for $n_1 = 0$.

Let $n \geq 0$, and let $t \in \Rl$.
From Lemma \ref{g conjugation} and \eqref{sigma_1 matrix},
\begin{align*}
    e^{itD_1}b_3e^{-itD_1} &= \sin(2t)b_2+\cos(2t)b_3
\end{align*}
Since $\Ec(b_3^n)$ contains the algebra generated by the spectral projections of $D_1$, we have:
\begin{equation*}
    \Ec(b_3^n) = e^{itD_1}\Ec(b_3^n)e^{-itD_1}.
\end{equation*}
So,
\begin{align*}
    \Ec(b_3^n) &= \Ec(e^{itD_1}b_3^ne^{-itD_1})\\
               &= \Ec((e^{itD_1}b_3e^{-itD_1})^n)\\
               &= \Ec((\sin(2t)b_2+\cos(2t)b_3)^n).
\end{align*}
However $[b_2,b_3] \in \Kc(L_2(\SU(2)))$, so $(q\Ec)$ vanishes on the ideal generated by $[b_2,b_3]$. Hence,

\begin{equation}\label{positive t}
    (q\mathcal{E})(b_3^n) = \sum_{l=0}^n\binom{n}{l}\sin^l(2t)\cos^{n-l}(2t)(q \Ec)(b_2^lb_3^{n-l}).
\end{equation}

We now prove the first assertion of the lemma. Substituting $-t$ instead of $t,$ we obtain
\begin{equation}\label{negative t}
    (q\Ec)(b_3^n) = \sum_{l=0}^n(-1)^l\binom{n}{l}\sin^l(2t)\cos^{n-l}(2t)(q\Ec)(b_2^lb_3^{n-l}).
\end{equation}
Therefore by subtracting \eqref{negative t} from \eqref{positive t}, we have
\begin{equation}\label{odd linear dependence}
    \sum_{\substack{0\leq l\leq n\\ l\mbox{ is odd}}}\binom{n}{l}\sin^l(2t)\cos^{n-l}(2t)(q\Ec)(b_2^lb_3^{n-l}) = 0.
\end{equation}

The functions $\sin^l(2t)\cos^{n-l}(2t)$ are linearly independent, and so it follows that $(q\Ec)(b_2^{l}b_3^{n-l}) = 0$
when $l$ is odd. 
Substituting $\frac{\pi}{2}-t$ into \eqref{positive t}, we obtain by an identical argument that $(q\Ec)(b_2^{n-l}b_3^l) = 0$ when $l$ is odd. This
proves the first part of the lemma.

Now we prove the second part of the lemma. Assume now that $n$ is even.
Since $\sin^2(2t)+\cos^2(2t) = 1$, we get:
\begin{align*}
    1 &= (\cos^2(2t)+\sin^2(2t))^{n/2}\\
      &= \sum_{j=0}^{n/2} \binom{n/2}{j} \cos^{n}(2t)\sin^{n-2j}(2t).
\end{align*}

By changing the index of summation, it follows that
\begin{equation*}
    \sum_{\substack{0\leq l\leq n\\ l\mbox{ is even}}}\binom{\frac{n}{2}}{\frac{l}{2}}\sin^l(2t)\cos^{n-l}(2t)=1.
\end{equation*}
Multiplying the left hand side of \eqref{odd linear dependence} by $1$, we get
\begin{align*}
\sum_{\substack{0\leq l\leq n\\ l\mbox{ is even}}}\binom{\frac{n}{2}}{\frac{l}{2}}\sin^l(2t)\cos^{n-l}(2t)(q\Ec)(b_3^n) = \sum_{l=0}^n\binom{n}{l}\sin^l(2t)\cos^{n-l}(2t)(q\Ec)(b_2^lb_3^{n-l}).
\end{align*}

Since the functions $t\to \sin^l(2t)\cos^{n-l}(2t),$ $0\leq l\leq n,$ are linearly independent, it follows that
\begin{equation}\label{b_3 in terms of b_2}
    \binom{\frac{n}{2}}{\frac{l}{2}}(q\Ec)(b_3^n) =  \binom{n}{l}(q\Ec)(b_2^lb_3^{n-l}),\quad l\mbox{ is even}.
\end{equation}

In particular for $l=2$ we have
$$\mathcal{E}(b_2^2b_3^{n-2}) = \frac1{n-1}(q\Ec)(b_3^n).$$
Since $1-b_1^2=b_2^2+b_3^2$, we have
\begin{align*}
    (1-q(b_1)^2)(q\Ec)(b_3^{n-2}) &= (q\Ec)((b_2^2+b_3^2)b_3^{n-2})\\
                                    &= (q\Ec)(b_3^n)+\frac{1}{n-1}(q\Ec)(b_3^n)\\ 
                                    &= \frac{n}{n-1}(q\Ec)(b_3^n).
\end{align*}
So:
\begin{equation*}
    (q\Ec)(b_3^n) = \frac{n-1}{n}q(\Ec)(b_3^{n-2})(1-q(b_1)^2).
\end{equation*}
By induction, we have
\begin{equation}\label{b_3 formula}
    (q\Ec)(b_3^n) = \frac{(n-1)!!}{n!!}(1-q(b_1)^2)^{\frac{n}{2}}.
\end{equation}
Substituting \eqref{b_3 formula} into \eqref{b_3 in terms of b_2}, we obtain
$$(q\Ec)(b_2^lb_3^{n-l}) = \frac{\binom{\frac{n}{2}}{\frac{l}{2}}}{\binom{n}{l}}\cdot\frac{(n-1)!!}{n!!}(1-q(b_1)^2)^{\frac{n}{2}}.$$

We also compute the Beta function:
\begin{equation*}
B\left(\frac{l+1}{2},\frac{n-l+1}{2}\right) = \frac{\Gamma\left(\frac{l}{2}+\frac{1}{2}\right)\Gamma\left(\frac{n-l}{2}+\frac{1}{2}\right)}{\Gamma(\frac{n}{2}+1)}.
\end{equation*}
Taking into account that, for even $n$ and $l$
$$\Gamma\left(\frac{l}{2}+\frac{1}{2}\right)=\frac{(l-1)!!}{2^{\frac{l}{2}}}\pi^{\frac12},\quad \Gamma\left(\frac{n-l}{2}+\frac{1}{2}\right)=\frac{(n-l-1)!!}{2^{\frac{n-l}{2}}}\pi^{\frac12}$$
it follows that
$$\frac1{\pi}B\left(\frac{l+1}{2},\frac{n-l+1}{2}\right) = \frac{l!(n-l)!}{2^{\frac{n}{2}}\cdot l!!\cdot (n-l)!!\cdot(\frac{n}{2})!}=\frac{\binom{\frac{n}{2}}{\frac{l}{2}}}{\binom{n}{l}}\cdot\frac{(n-1)!!}{n!!}.$$

Taking $l = n_2$ and $n = n_2+n_3$, we finally have:
\begin{equation*}
    (q\Ec)(b_2^{n_2}b_3^{n_3}) = \frac{1}{\pi}B\left(\frac{n_2+1}{2},\frac{n_3+1}{2}\right)(1-q(b_1)^2)^{\frac{n_2+n_3}{2}}.
\end{equation*}

Taking $l = n_2$ and $n = n_2+n_3$, we finally have:
\begin{equation*}
    (q\Ec)(b_2^{n_2}b_3^{n_3}) = \frac{1}{\pi}B\left(\frac{n_2+1}{2},\frac{n_3+1}{2}\right)(1-q(b_1)^2)^{\frac{n_2+n_3}{2}}.
\end{equation*}
\end{proof}

\begin{cor}\label{u intertwining}
    Let $f \in C(\Sb^2)$. Then for any $x \in \A_2$ with $u(f) = q(x)$, we have:
    \begin{equation*}
        q(\Ec(x)) = u(\mathbb{E}(f))
    \end{equation*}
    where $\mathbb{E}$ is the conditional expectation onto the subalgebra generated by $t_1$
    as in Lemma \ref{sphere motivation}.
\end{cor}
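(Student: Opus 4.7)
The strategy is to reduce the identity to an equality on monomials, where the explicit computations of Lemma \ref{sphere motivation} and Lemma \ref{odd computational lemma} already match term-by-term, and then extend by density.

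First I would address well-definedness: the right-hand side $q(\Ec(x))$ depends only on $q(x)$. Indeed, if $q(x_1) = q(x_2)$ then $x_1 - x_2 \in \Kc(L_2(\SU(2)))$, and since $\Ec$ was shown to map compact operators to compact operators, $\Ec(x_1) - \Ec(x_2) \in \Kc$, hence $q(\Ec(x_1)) = q(\Ec(x_2))$. This allows us to define $\bar{\Ec}:q(\A_2) \to q(\A_2)$ by $\bar{\Ec}(q(y)) := q(\Ec(y))$, which is automatically contractive because
\[
    \|\bar{\Ec}(q(y))\| = \inf_{k \in \Kc}\|\Ec(y) + k\| = \inf_{k \in \Kc}\|\Ec(y+k)\| \leq \|q(y)\|.
\]
The corollary is then equivalent to the identity $\bar{\Ec} \circ u = u \circ \mathbb{E}$ on $C(\Sb^2)$.

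Next I would verify this identity on monomials $f = t_1^{n_1}t_2^{n_2}t_3^{n_3}$ using the canonical lift $x = b_1^{n_1}b_2^{n_2}b_3^{n_3}$, which satisfies $q(x) = u(f)$. Since $\Ec(b_1^{n_1}T) = b_1^{n_1}\Ec(T)$, the left side is $q(b_1)^{n_1} \cdot q(\Ec(b_2^{n_2}b_3^{n_3}))$. Similarly, since $t_1$ lies in the subalgebra onto which $\mathbb{E}$ projects, the right side equals $u(t_1)^{n_1} \cdot u(\mathbb{E}(t_2^{n_2}t_3^{n_3})) = q(b_1)^{n_1} \cdot u(\mathbb{E}(t_2^{n_2}t_3^{n_3}))$. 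Comparing Lemma \ref{sphere motivation} with Lemma \ref{odd computational lemma} case by case, both $q(\Ec(b_2^{n_2}b_3^{n_3}))$ and $u(\mathbb{E}(t_2^{n_2}t_3^{n_3}))$ vanish when $n_2$ or $n_3$ is odd, and both equal
\[
    \frac{1}{\pi}B\!\left(\frac{n_2+1}{2},\frac{n_3+1}{2}\right)(1-q(b_1)^2)^{(n_2+n_3)/2}
\]
when both are even (using $u(t_1) = q(b_1)$). Thus the identity holds on every monomial, and by linearity on every polynomial in $t_1, t_2, t_3$.

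Finally, Stone--Weierstrass gives uniform density of such polynomials in $C(\Sb^2)$. Since $u$ is contractive (being a $*$-homomorphism) and both $\mathbb{E}$ and $\bar{\Ec}$ are contractive, the maps $u \circ \mathbb{E}$ and $\bar{\Ec} \circ u$ are continuous from $C(\Sb^2)$ to $q(\A_2)$, so their agreement on the dense subalgebra of polynomials extends to all of $C(\Sb^2)$. The only mildly subtle step is the well-definedness argument establishing $\bar{\Ec}$; beyond that, all the computational content has already been absorbed into Lemmas \ref{sphere motivation} and \ref{odd computational lemma}, so no genuine obstacle remains.
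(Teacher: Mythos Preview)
Your proposal is correct and follows essentially the same approach as the paper: match the monomial computations from Lemmas \ref{sphere motivation} and \ref{odd computational lemma}, extend by linearity to polynomials, and then by continuity to all of $C(\Sb^2)$. Your explicit treatment of well-definedness via the induced map $\bar{\Ec}$ is a nice addition that the paper leaves implicit; note only that the middle equality in your contractivity display should be an inequality $\leq$ (since $\Ec(\Kc)$ need not exhaust $\Kc$), though the final bound $\|\bar{\Ec}(q(y))\|\leq\|q(y)\|$ is unaffected.
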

\begin{proof}
    Since $u(t_1) = q(b_1)$, by applying $u$ to the result of Lemma \ref{sphere motivation} when $n_2$ and $n_3$ are even:
    \begin{equation*}
        u(\mathbb{E}(t_1^{n_1}t_2^{n_2}t_3^{n_3})) = \frac{1}{\pi}B\left(\frac{n_2+1}{2},\frac{n_3+1}{2}\right)q(b_1)^{n_1}(1-q(b_1)^2)^{\frac{n_2+n_3}{2}}
    \end{equation*}
    However due to Lemma \ref{odd computational lemma}, the above hand side above is exactly $(q\Ec)(b_1^{n_1}b_2^{n_2}b_3^{n_3})$. Hence
    when both $n_2$ and $n_3$ are even:
    \begin{equation*}
        u(\mathbb{E}(t_1^{n_1}t_2^{n_2}t_3^{n_3})) = (q\Ec)(b_1^{n_1}b_2^{n_2}b_3^{n_3}).
    \end{equation*}
    The above also holds when one of $n_2$ or $n_3$ is odd, since both sides are zero from Lemmas \ref{sphere motivation} and \ref{odd computational lemma}.
    Hence the result is proved for $f(t) = t_1^{n_1}t_2^{n_2}t_3^{n_3}$. So by linearity, the result follows for all polynomials $f$.
    
    As all of the maps $q$, $\Ec$, $u$ and $\mathbb{E}$ are continuous, the result then follows for all $f \in C(\Sb^2)$.
\end{proof}

\begin{lem}\label{sphere key estimate} If $f\in C(\Sb^2)$, then
$$\|u(f)\|\geq |f(1,0,0)|.$$
\end{lem}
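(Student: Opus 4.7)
The plan is to combine Corollary~\ref{u intertwining} with the continuous functional calculus for $q(b_1)$, using the fact that $1$ lies in the essential spectrum of $b_1$. I begin by choosing a lift $x \in \A_2$ with $q(x) = u(f)$; Corollary~\ref{u intertwining} then gives $u(\mathbb{E}(f)) = q(\Ec(x))$. Because $\Ec$ is contractive on $\Bc(L_2(\SU(2)))$ and maps compacts to compacts (as established in the discussion preceding Corollary~\ref{u intertwining}), it descends to a contractive endomorphism of the Calkin algebra, and so
\begin{equation*}
\|u(\mathbb{E}(f))\| = \|q(\Ec(x))\| \leq \|q(x)\| = \|u(f)\|.
\end{equation*}

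Next, $\mathbb{E}(f) \in C(\Sb^2)$ lies in the subalgebra generated by $t_1$, so I can write $\mathbb{E}(f) = g(t_1)$ for some $g \in C([-1,1])$, and then $u(\mathbb{E}(f)) = g(q(b_1))$ by continuous functional calculus; in particular $\|u(\mathbb{E}(f))\| = \sup_{\lambda \in \sigma(q(b_1))}|g(\lambda)|$. The value $g(1) = \mathbb{E}(f)(1)$ equals $f(1,0,0)$, since $\mathbb{E}(f)$ is the average of $f$ over latitudes $\{t_1 = \mathrm{const}\}$ with respect to the rotation-invariant measure, and at $t_1 = 1$ this latitude degenerates to the single point $(1,0,0)$. (Alternatively, this equality can be read off directly from Lemma~\ref{sphere motivation} applied to monomials and extended by continuity.) Consequently, once $1 \in \sigma(q(b_1))$ is established, we immediately obtain
\begin{equation*}
\|u(f)\| \geq \|u(\mathbb{E}(f))\| \geq |g(1)| = |f(1,0,0)|,
\end{equation*}
as required.

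The remaining point is $1 \in \sigma(q(b_1))$, i.e.\ $1 \in \sigma_{\mathrm{ess}}(b_1)$. I plan to derive this from the Peter--Weyl decomposition of $L_2(\SU(2))$: on each $l$-isotypic component ($l \in \tfrac12\Ntrl$), $-\Delta$ acts as $l(l+1)$ while the eigenvalues of $D_1$ form a symmetric arithmetic progression whose extreme value is of order $l$, so one finds eigenvectors of $b_1 = D_1/\sqrt{-\Delta}$ lying in mutually orthogonal subspaces with eigenvalues tending to $1$, and Weyl's criterion applies. I expect this to be the main obstacle, since one must keep track of the precise normalization of $D_1$ relative to $\Delta$. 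If that becomes awkward, the same fact follows cleanly from Lemma~\ref{eta transformation}: unitary conjugation preserves joint spectra, so the joint spectrum of $(q(b_1),q(b_2),q(b_3))$ in $\Sb^2$ is a non-empty closed $\SO(3)$-invariant subset, hence all of $\Sb^2$, and in particular contains $(1,0,0)$.
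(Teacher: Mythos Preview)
Your argument is correct and follows essentially the same route as the paper: reduce to functions of $t_1$ via Corollary~\ref{u intertwining} and the Calkin-contractivity of $\Ec$, then invoke the spectrum of $q(b_1)$. The paper is slightly less economical, proving that the eigenvalues $m/\sqrt{l(l+1)}$ of $b_1$ are dense in $[-1,1]$ (hence $\sigma_{\mathrm{ess}}(b_1)=[-1,1]$) rather than just $1\in\sigma_{\mathrm{ess}}(b_1)$, but the mechanism is identical.

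Your backup argument via $\SO(3)$-invariance of the joint spectrum of $(q(b_1),q(b_2),q(b_3))$ is worth a comment: it is not used in the paper, and it in fact bypasses the lemma entirely. Once you know the joint spectrum is a non-empty closed $\SO(3)$-invariant subset of $\Sb^2$, transitivity forces it to be all of $\Sb^2$, which is exactly the statement that $u$ is injective---i.e.\ Theorem~\ref{isometry thm} itself. So this route makes Lemma~\ref{sphere key estimate}, Corollary~\ref{u intertwining}, and the computations of Lemmas~\ref{sphere motivation} and~\ref{odd computational lemma} all unnecessary for the isometry result. The paper's approach trades this elegance for a concrete spectral computation, which has the side benefit of identifying $\sigma(b_1)$ explicitly.
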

\begin{proof} 
 
    Suppose first that $f$ depends only on the first variable, so $f(t)=p(t_1)$ for some $p \in C([-1,1]).$ Since $q(\A_2)$ has the Calkin algebra norm, we have
    $$\|u(f)\|=\inf_{K\in\mathcal{K}(L_2({\rm SU}(2)))}\|p(b_1)+K\|.$$
    It is shown in \cite[Theorem 11.9.3]{ruzh-turu} that there exists a decomposition of $L_2(\SU(2))$ into orthogonal finite dimensional subspaces
    $$\{V_{l,m}\;:\;l \in \frac{1}{2}\Ntrl, m = -l,-l+1,\ldots,0,1,\ldots,l\}$$
    such that if $e_{l,m} \in V$, then (note that $D_1$ is denoted $\partial_0$ in \cite{ruzh-turu})
    $$\Delta e_{l,m} = -l(l+1)e_{l,m},\quad D_1e_{l,m} = me_{l,m},\quad -l\leq m\leq l,\quad l\geq0.$$
    Hence, $b_1e_{l,m} = \frac{m}{\sqrt{l(l+1)}}e_{l,m}$ when $l \neq 0$, so it follows that the eigenvalues of $b_1$ are dense in $[-1,1]$. Therefore,
    $$\inf_{K \in \Kc(L_2(\SU(2)))}\|p(b_1)+K\|=\|p\|_\infty.$$
    So, 
    $$\|u(f)\|\geq\|p\|_{\infty} = \|f\|_\infty.$$ So the statement of the Lemma is proved in the case when $f(t)$ depends only on $t_1$.

    Now we prove the general statement. Since $\mathbb{E}(f)$ depends only on $t_1$, by the preceding argument:
 \begin{equation*}
    \|u(\mathbb{E}(f))\| \geq \|\mathbb{E}(f)\|_\infty.
 \end{equation*}
 Since $u$ is surjective, we may choose $x \in \A_2$ such that $q(x) = u(f)$. So from Lemma
 \ref{u intertwining},
 \begin{equation*}
    \|q(\Ec(x))\| = \|u(\mathbb{E}(f))\| \geq \|\mathbb{E}(f)\|_\infty.
 \end{equation*}
 Since $\Ec$ maps $\Kc(L_2(\SU(2)))$ to itself,
 \begin{align*}
    \|q(\Ec(x))\| &= \inf_{K \in \Kc(L_2(\SU(2)))}\|\Ec(x)+K\|\\
                  &\leq \inf_{K \in \Kc(L_2(\SU(2)))} \|\Ec(x)+\Ec(K)\|\\
                  &\leq \inf_{K \in \Kc(L_2(\SU(2)))} \|x+K\|\\
                  &= \|q(x)\|.
 \end{align*}
 Therefore, $\|q(\Ec(x))\| \leq \|q(x)\| = \|u(f)\|$, it follows that
 \begin{equation*}
    \|u(f)\| \geq \|\mathbb{E}(f)\|_\infty.
 \end{equation*} 
 Since $\|\mathbb{E}(x)\|_\infty \geq |x(1,0,0)|$, this yields the result.
\end{proof}

\begin{proof}[Proof of Theorem \ref{isometry thm}] 
    Since we know that the mapping $u$ is surjective and a $*$-homomorphism,
    it suffices to show that $u$ is an isometry. We know that $\|u(f)\| \leq \|f\|_\infty$, 
    so it suffices to prove the reverse inequality.
    
    From Lemma \ref{eta transformation},
    \begin{equation*}
        \lambda_l(g)u(f)\lambda_l(g)^{-1} = u(f\circ \eta(g)).
    \end{equation*}
    So $\|u(f)\| = \|u(f\circ \eta(g))\|$. Applying now Lemma \ref{sphere key estimate},
    \begin{equation*}
        \|u(f)\| \geq |(f\circ \eta(g))(1,0,0)|.
    \end{equation*}
    Since the map $\eta$ is surjective, we have that $\eta(g)$ is an arbitrary element
    of $\SO(3)$. Thus $\|u(f)\| \geq \|f\|_\infty$, as required.
\end{proof}

\end{document}